\documentclass[11pt]{article}
\usepackage[a4paper, margin=1in]{geometry}
\usepackage{amsmath,amsthm,amssymb,bbm,graphicx,caption,subcaption,natbib,xcolor,enumerate,enumitem,tikz}
\usepackage[hypertexnames=false]{hyperref}
\usepackage[normalem]{ulem}
\usetikzlibrary{arrows.meta,patterns,intersections,shapes.misc,calc}
\usepackage[mathscr]{euscript}
\hypersetup{colorlinks,linkcolor={red!70!black},citecolor={blue!70!black},urlcolor={blue!70!black}}
\setlist[itemize]{leftmargin=0.4cm}

\newcommand{\R}{\mathbb{R}}

\newcommand{\N}{\mathbb{N}}
\newcommand{\C}{\mathbb{C}}
\newcommand{\E}{\mathbb{E}}
\renewcommand{\Pr}{\mathbb{P}}
\newcommand{\Ind}{\mathbbm{1}}

\newcommand{\norm}[1]{\|{#1}\|}

\newtheorem{theorem}{Theorem}
\newtheorem{proposition}[theorem]{Proposition}
\newtheorem{lemma}[theorem]{Lemma}
\newtheorem{corollary}[theorem]{Corollary}

\theoremstyle{definition}

\newtheorem*{remark*}{Remark}
\newtheorem{example}{Example}

\newcommand{\cvd}{\overset{d}{\to}}
\newcommand{\eqd}{\overset{d}{=}}

\DeclareMathOperator\Cov{Cov}
\DeclareMathOperator\tr{tr}

\DeclareMathOperator*{\argmin}{argmin}

\DeclareMathOperator\Unif{Unif}
\DeclareMathOperator\dK{d_\mathrm{K}}
\DeclareMathOperator\W{\mathrm{W}}

\title{Is the regression $F$-test doubly robust?}
\author{Lucy Xia$^\ast$, Oliver Y. Feng$^\dagger$, Yang Feng$^\ddagger$, Min Xu$^\sharp$ and Richard J. Samworth$^\flat$ \\ \\
$^\ast$Department of Information Systems, Business Statistics and Operations Management, \\ Hong Kong University of Science and Technology \\
$^\dagger$Department of Statistics, London School of Economics and Political Science \\
$^\ddagger$School of Global Public Health, New York University \\
$^\sharp$Department of Statistics, Rutgers University\\
$^\flat$Statistical Laboratory, University of Cambridge
} 

\date{\today}

\begin{document}

\maketitle

\begin{abstract}
We study the robustness of the $F$-test in random design linear models, and reach a somewhat nuanced conclusion.  On the positive side, one of our main results is that the size of the test is close to its nominal level as soon as either the distribution of the normalised error vector is close to uniform on the unit sphere, or the design matrix, after applying a suitable column space-preserving orthogonalisation scheme, is close to being uniformly distributed.  This provides a sense in which the $F$-test is doubly robust.  Our conclusion is reached by establishing a Kolmogorov to Wasserstein distance H\"older continuity property controlling the departure of the $F$-statistic from its notional $F$-distribution under the null.  Writing~$n$,~$p$ and $p_0$ for the sample size and the dimensions of the full and null models respectively, we prove that the H\"older exponent is $1/3$ when $\min(p-p_0,n-p) = 1$ and $1/2$ when $\min(p-p_0,n-p) \geq 2$.  On the other hand, these exponents are relatively small and cannot be improved in general, revealing that the size of the test may depart from its nominal level quite quickly as we move away from settings where the test is exact.  In some cases, our conclusions may be improved by working with a local Kolmogorov distance that focuses on discrepancies between distribution functions in the right tail.
\end{abstract}

\section{Introduction}

Consider the linear model
\[
Y = X\beta^0 + \varepsilon,
\]
where $X$ is a random $n \times p$ design matrix with $n > p$, $\beta^0 \in \mathbb{R}^p$ is a vector of regression coefficients, and $\varepsilon = (\varepsilon_1,\dotsc,\varepsilon_n)$ is a stochastic error vector with $\mathbb{E}(\varepsilon\,|\,X) = 0$. When $\beta^0$ is partitioned as $\beta^0 = (\beta_0^0,\beta_1^0)$ with $\beta_0^0 \in \R^{p_0}$ and $\beta_1^0 \in \R^{p - p_0}$, a test of $H_0: \beta_1^0 = 0$ against $H_1: \beta_1^0 \neq 0$ is known as a \emph{variable significance test}.  We may partition $X$ correspondingly as $X = (X_0 \; \tilde{X})$, where $X_0$ and $\tilde{X}$ have $p_0$ and $p-p_0$ columns respectively, and when $X$ has full column rank, the least squares estimators of $\beta_0^0$ and $\beta^0$ under $H_0$ and $H_1$ respectively are given by
\[
\hat\beta_0 := (X_0^\top X_0)^{-1}X_0^\top Y \quad\text{and}\quad \hat{\beta} := (X^\top X)^{-1}X^\top Y.
\]
The \textit{$F$-statistic} is defined in terms of the residual sums of squares $\mathrm{RSS}_0 := \norm{Y - X_0\hat{\beta}_0}_2^2$ and $\mathrm{RSS} := \norm{Y - X\hat\beta}_2^2$ by
\[
\mathsf{F} := \frac{(\mathrm{RSS}_0 - \mathrm{RSS})/(p - p_0)}{\mathrm{RSS}/(n - p)}.
\]
It is a standard textbook result~\citep[e.g.][Proposition~2.9]{samworth24modern} that if $X$ and~$\varepsilon$ are independent with $\varepsilon \sim N_n(0,\sigma^2 I_n)$ for some $\sigma^2 > 0$, then under the null hypothesis,~$\mathsf{F}$ has an $F$-distribution with $p - p_0$ and $n - p$ degrees of freedom, denoted $F_{p-p_0,n-p}$.\footnote{Recall that if $W_1 \sim \chi_{d_1}^2$ and $W_2 \sim \chi_{d_2}^2$ are independent, then $(W_1/d_1)/(W_2/d_2) \sim F_{d_1,d_2}$.} 

As an immediate consequence, if $\alpha \in (0,1)$ and $q_\alpha^*$ denotes the $(1 - \alpha)$-level quantile of the $F_{p-p_0,n-p}$ distribution, then under the same assumptions on $X$ and $\varepsilon$, the \emph{$F$-test}, which rejects~$H_0$ if $\mathsf{F} > q_\alpha^*$, has exact size $\alpha$. An important special case is that of $p - p_0 = 1$, in which case $\mathsf{F} = \mathsf{T}^2$, where the usual regression $t$-statistic $\mathsf{T}$ has a $t_{n-p}$ distribution under~$H_0$.

This paper concerns the robustness of the $F$-test to departures from the assumptions that~$X$ and $\varepsilon$ are independent with $\varepsilon \sim N_n(0,\sigma^2 I_n)$.  As we outline in our literature review in Section~\ref{sec:relatedWork}, this is a thorny question with a long and detailed history.  The following quotations give a flavour of the contrasting viewpoints expressed:

\begin{quote}
$\ldots$ [in the $K$-way ANOVA setting with equal group sizes] the sampling distribution of\footnote{Pearson writes $\eta^2 := \frac{\mathrm{RSS}_0 - \mathrm{RSS}}{\mathrm{RSS}_0} = \frac{p-p_0}{n-p} \cdot \mathsf{F}$.} $\eta^2$ is remarkably insensitive to changes in population form.

\hfill \citep{pearson1931analysis}
\end{quote}

\begin{quote}
$\ldots$ probabilities derived from well-known analyses of variance and other `small sample' tables, which postulate universal normality, may differ seriously from the true probabilities when the universes are non-normal, even, in some cases, when the degree of non-normality is not considerable.

\hfill \citep{geary1947testing}
\end{quote}

\begin{quote}
Our results may be summarized in the simple statement that sensitivity to non-normality  in the $y$'s is determined by the extent of the `non-normality of the $x$'s.

\hfill \citep{box1962robustness}
\end{quote}

\begin{quote}
In sum, there have been tremendous efforts over the past century put into the robustness of the regression t- and F-test and it was agreed that the t-test is insensitive to non-normality, high dimensions and irregularity of design matrices to certain extent while the F-test is less robust in general.\footnote{Based on our reading of this literature, we would qualify this quotation given in the remarkable bibliographic survey of \citet{lei2021assumption} by replacing `t-test is insensitive$\ldots$' with `one-sample t-test is insensitive$\ldots$'.}

\hfill \citep{lei2021assumption}
\end{quote}

Our first result (Propositions~\ref{prop:spherical-sym-A} and~\ref{prop:spherical-sym}) extends the range of joint distributions of $(X,\varepsilon)$ for which the $F$-test is known to be exact.  It turns out that the critical quantity is a trivariate random vector $T$, representing a self-normalised version of the decomposition of $\|\varepsilon\|^2$ into its projections onto three mutually orthogonal subspaces determined by $X$.  In fact, whenever $T$ has an appropriate Dirichlet distribution, the $F$-test is exact.  In particular, this holds whenever (suitably projected versions of) $X$ and $\varepsilon$ are independent and either one of them has a rotationally invariant distribution. For instance, this is the case if $X$ and $\varepsilon$ are independent, with $\varepsilon$ having an arbitrary distribution, and where $X$ may have a combination of fixed design covariates in the null model (e.g.~an intercept term) and rows of the remaining submatrix being independent and identically distributed mean-zero Gaussian random vectors.  

Our main contributions, however, concern the way in which the size of the $F$-test departs from its nominal level as $T$ departs from the corresponding analogue $T^*$ having the relevant Dirichlet distribution.  Theorem~\ref{thm:dK-W1-A} reveals that the Kolmogorov distance between $\mathsf{F}$ and its nominal $F$-distribution is $\gamma$-H\"older continuous in the $L^1$-Wasserstein distance between $nT$ and $nT^*$, where $\gamma=1/3$ when $\min(p-p_0,n-p) = 1$ and $\gamma=1/2$ otherwise.  Moreover, we show through examples that these H\"older exponents cannot be improved in general.  Thus, we do find a certain level of robustness of the $F$-test, but especially when $\min(p-p_0,n-p) = 1$, which includes the $t$-test where $p-p_0 = 1$, relatively small Wasserstein departures of $nT$ from $nT^*$ can lead to significant changes in the size of the $F$-test.

The distribution of $T$ depends on the joint distribution of $X$ and $\varepsilon$.  Our next main result (Theorem~\ref{thm:doubly-robust-A}) therefore shows that the $L^1$-Wasserstein distance between $nT$ and~$nT^*$ can be controlled in terms of the minimum of Wasserstein distances representing the extent to which the marginal distributions of $X$ and $\varepsilon$ deviate from rotational invariance in a suitable sense.  This provides a sense in which the $F$-test is doubly robust\footnote{In the semiparametric and causal inference literatures \citep[e.g.][]{robins1995semiparametric,scharfstein1999adjusting,hirano2003efficient,kang2007demystifying,rotnitzky2012improved,imbens2015causal,smucler2019unifying}, double robustness refers to the property that average effects can be estimated consistently if at least one of two models defining the data-generating mechanism are well-specified (usually the propensity score and outcome regression models). Our notion of double robustness is related but not identical, as we work within a standard linear model but establish that the $F$-test is valid whenever at least one of the covariate and error distributions has a rotationally invariant distribution.}: its size is close to its nominal level as soon as either the distribution of the normalised error vector is close to uniform on the unit sphere, or the design matrix, after applying a suitable column space-preserving orthogonalisation scheme, is close to being uniformly distributed.

In practice, the null hypothesis is rejected when the $F$-statistic exceeds a given critical value in the right tail of the relevant $F$-distribution.  This motivates the definition of a more liberal `local' Kolmogorov distance that measures only the discrepancy between distribution functions to the right of a given threshold, specifically the appropriate upper $\alpha$-quantile. Theorem~\ref{thm:dK-alpha} establishes an upper bound depending on $\alpha$ with improved H\"older exponents in some regimes, which are also tight for some joint distributions of independent and row-wise exchangeable $(X,\varepsilon)$.

For simplicity of exposition, we present our results in the main text in a way that ignores the potential to orthogonalise covariates with respect to others that are present in the null model.  However, taking advantage of this opportunity leads to a stronger theory, so in the Appendix, we state and prove these more general results, from which the corresponding statements in the main text follow immediately. 

\subsection{Notation}

For $m,n \in \N$, let $[m:n] := [m,n] \cap \N$, $(m:n] := (m,n] \cap \N$ and $[n] := [1:n]$. For $d \in \N$ and $q \in [1,\infty)$, the \emph{$\ell_q$ norm} on $\R^d$ is given by $\norm{v}_q := \bigl(\sum_{j=1}^d |v_j|^q\bigr)^{1/q}$, and the \emph{unit Euclidean sphere} is denoted by $\mathcal{S}^{d-1} := \{v \in \R^d : \norm{v}_2 = 1\}$. We write $e_1,\dotsc,e_d \in \R^d$ for the standard basis vectors and $I_d$ for the $d \times d$ identity matrix. For $n,d \in \N$, denote by $A_{\cdot j} = (A_{1j},\dotsc,A_{nj})$ the $j$th column of a matrix $A = (A_{ij}) \in \R^{n \times d}$, and define $A_S := (A_{\cdot j})_{j \in S} \in \R^{n \times |S|}$ for $S \subseteq [d]$. The \emph{image} (i.e.~column space) of $A$ is the subspace $\mathrm{Im}(A) := \{Av : v \in \R^d\}$, and its \emph{Frobenius norm} is $\norm{A}_\mathrm{F} := (\sum_{i=1}^n\sum_{j=1}^d A_{ij}^2)^{1/2}$. For distributions $\mathscr{P},\mathscr{P}'$ on $\R^{n \times d}$ with finite $q$th moments, the $L^q$-\textit{Wasserstein distance}
\[
\mathrm{W}_q(\mathscr{P},\mathscr{P}') := \inf_{(Z,Z') \sim (\mathscr{P},\mathscr{P}')}\E(\norm{Z - Z'}_\mathrm{F}^q)^{1/q}
\]
is defined as an infimum over all random pairs $(Z,Z')$ with $Z \sim \mathscr{P}$ and $Z' \sim \mathscr{P}'$. For convenience, we slightly abuse notation to write $\mathrm{W}_q(Z,Z') = \mathrm{W}_q(\mathscr{P},\mathscr{P}')$ when $Z \sim \mathscr{P}$ and $Z' \sim \mathscr{P}'$. When $n \geq d$, define $\mathcal{O}_{n \times d}$ to be the set of $n \times d$ matrices with orthonormal columns. A random $n \times d$ matrix $A$ is said to have a \textit{(left) rotationally invariant} distribution if $Q_0 A \eqd A$ for every deterministic $Q_0 \in \mathcal{O}_{n \times n}$. There is a unique distribution on $\mathcal{O}_{n \times d}$, denoted by $\Unif(\mathcal{O}_{n \times d})$, such that $Q_0 A \eqd A$ for every fixed $Q_0 \in \mathcal{O}_{n \times n}$ when $A \sim \Unif(\mathcal{O}_{n \times d})$ \citep[Theorem~6.3 and Proposition~7.2]{eaton2007multivariate}. Finally, a function $f \colon \R \to \R$ is said to be $(\gamma,L)$-H\"older for $\gamma \in (0,1]$ and $L > 0$ if $|f(x) - f(y)| \leq L|x - y|^\gamma$ for all $x,y \in \R$.

\section{Main results}
\label{sec:main-results}

Suppose throughout that $p_0,p,n \in \mathbb{N}_0$ satisfy $0 \leq p_0 < p < n$.  Our results will hold in a more general framework than that discussed in the introduction; in fact, we only require the parameter $\beta^0 \in \mathbb{R}^p$ to be identifiable.  More precisely, let $\mathcal{P}$ denote a set of joint distributions of random pairs $(X,Y)$, where $X$ is an $n \times p$ design matrix and $Y$ is an $n$-dimensional response vector.  Given a function $\beta:\mathcal{P} \rightarrow \mathbb{R}^p$ and $(X,Y) \sim \mathscr{P} \in \mathcal{P}$, we write $\beta^0 := \beta(\mathscr{P})$ and $\varepsilon := Y - X\beta^0$, so that
\[
Y = X\beta^0 + \varepsilon.
\]
To reconcile this general formulation with more familiar settings, suppose that $\mathcal{P}$ consists of the set of distributions of $(X,Y)$ with $\mathbb{E}(X^\top X)$ positive definite and $\mathbb{E}(\|Y\|_2^2) < \infty$.  Then for $(X,Y) \sim \mathscr{P} \in \mathcal{P}$, we may define
\[
\beta(\mathscr{P}) := \argmin_{b \in \mathbb{R}^p}\,\mathbb{E}\bigl(\|Y - Xb\|_2^2) = \bigl\{\mathbb{E}(X^\top X)\bigr\}^{-1}\mathbb{E}(X^\top Y),
\]
which ensures that $\mathbb{E}(X^\top \varepsilon) = 0$.  Often, $\mathcal{P}$ is further restricted by the stronger assumption that $\mathbb{E}(Y \, | \, X) = X\beta^0$, so that $\mathbb{E}(\varepsilon \, | \, X) = 0$.  As a second example, and one that avoids moment conditions, we can take $\mathcal{P}$ to be the set of joint distributions of $(X,Y)$ for which there exists $\beta^0 \in \R^p$ such that $X$ and $\varepsilon = (\varepsilon_1,\dotsc,\varepsilon_n) = Y - X\beta^0$ are independent, $X$ has full column rank with positive probability and $\varepsilon_i \eqd -\varepsilon_i$ for all $i \in [n]$. Finally, if in this example we require $Xv$ to be non-deterministic for all fixed $v \in \R^p \setminus \{0\}$ instead of the errors being symmetric, then identifiability of $\beta^0$ is preserved; see Lemma~\ref{lem:linear-model-identifiable}. Henceforth, we will assume without further comment that $\mathcal{P}$ and the function $\beta$ are such that the error vector~$\varepsilon$ satisfies $\Pr(\varepsilon = 0) = 0$ for all $\mathscr{P} \in \mathcal{P}$, and moreover that $X$ has full column rank almost surely whenever $(X,Y) \sim \mathscr{P} \in \mathcal{P}$.  

Now suppose that there exists $\mathcal{P}_0  \subseteq \mathcal{P}$, called the \emph{null hypothesis parameter space}, such that $\beta_1^0 := \beta(\mathscr{P})_{(p_0:p]} = 0$ for all $\mathscr{P} \in \mathcal{P}_0$.  Our goal is to study the $F$-test of $H_0:\mathscr{P} \in \mathcal{P}_0$ against $H_1: \mathscr{P} \in \mathcal{P} \setminus \mathcal{P}_0$.  Recall the standard decomposition 
\[
\mathrm{RSS}_0 = \norm{(I_n - P_0)Y}_2^2 = \norm{(I_n - P)Y}_2^2 + \norm{(P - P_0)Y}_2^2 = \mathrm{RSS} + \norm{(P - P_0)Y}_2^2,
\]
where the $n \times n$ matrices $P := X(X^\top X)^{-1}X^\top$ and $P_0 := X_0(X_0^\top X_0)^{-1}X_0^\top$ represent the orthogonal projections onto the column spaces of $X$ and $X_0 \equiv X_{[p_0]}$ respectively. We adopt the convention that $\mathrm{Im}(X_0) = \{0\}$ and $P_0 = 0 \in \mathbb{R}^{n \times n}$ when $p_0 = 0$. Since $(P - P_0)X_0 = (I_n - P)X_0 = 0$, it follows that under $H_0$ where $Y = X_0\beta_0^0 + \varepsilon$, we can write
\begin{equation}
\label{eq:F-stat}
\mathsf{F} = \frac{\frac{1}{p - p_0}\norm{(P - P_0)Y}_2^2}{\frac{1}{n - p}\norm{(I_n - P)Y}_2^2} = \frac{\frac{1}{p - p_0}\norm{(P - P_0)\varepsilon}_2^2}{\frac{1}{n - p}\norm{(I_n - P)\varepsilon}_2^2}= \frac{T_2/(p - p_0)}{T_3/(n - p)}
\end{equation}
in terms of
\[
T = (T_1,T_2,T_3) := \frac{\bigl(\norm{P_0\varepsilon}_2^2,\; \norm{(P - P_0)\varepsilon}_2^2,\;\norm{(I_n - P)\varepsilon}_2^2\bigr)}{\norm{\varepsilon}_2^2}.
\]
The convenience of working with $T$ arises partly from the fact that $T$ depends on $\varepsilon$ only through the self-normalised error vector $\varepsilon/\norm{\varepsilon}_2$, so moment assumptions on $\varepsilon$ will not be required in our subsequent results. Moreover, $T$ has non-negative components satisfying $T_1 + T_2 + T_3 = 1$; in other words, $T$ takes values in the two-dimensional unit simplex in $\mathbb{R}^3$.

\subsection{\texorpdfstring{Sufficient conditions for exactness of the $F$-test}{Sufficient conditions for exactness of the F-test}}
\label{sec:exact-F}

\begin{proposition}
\label{prop:spherical-sym-A}
\begin{enumerate}[label=(\alph*)]
\item Suppose that
\[
T \sim \mathrm{Dirichlet}\Bigl(\frac{p_0}{2},\frac{p - p_0}{2},\frac{n - p}{2}\Bigr).
\]
Then
\[
\frac{\norm{(P - P_0)\varepsilon}_2^2}{\norm{(I_n - P_0)\varepsilon}_2^2} = \frac{T_2}{T_2 + T_3} \sim \mathrm{Beta}\Bigl(\frac{p - p_0}{2},\frac{n - p}{2}\Bigr),
\]
which is equivalent to  $\mathsf{F} \sim F_{p - p_0, n - p}$.
\item Suppose that $X$ and $\varepsilon$ are independent, and that at least one of them has a rotationally invariant distribution. Then $\mathsf{F} \sim F_{p - p_0, n - p}$ under~$H_0$. In particular, this holds if the rows of $X$ are independent $N_p(0,\Sigma_X)$ random vectors for some positive definite $\Sigma_X \in \R^{p \times p}$.
\end{enumerate}
\end{proposition}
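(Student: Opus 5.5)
For part~(a), I will use the aggregation property of the Dirichlet distribution. If $T \sim \mathrm{Dirichlet}(a_1,a_2,a_3)$, represent $T \eqd (G_1,G_2,G_3)/(G_1+G_2+G_3)$ with independent $G_j \sim \mathrm{Gamma}(a_j,1)$. When $p_0=0$, $G_1$ is interpreted as $0$ and $T_1=0$. Then
\[
T_2/(T_2+T_3) \eqd G_2/(G_2+G_3) \sim \mathrm{Beta}(a_2,a_3).
\]
Moreover, $T_2+T_3 = \|(I_n-P_0)\varepsilon\|_2^2/\|\varepsilon\|_2^2$, since $I_n - P_0 = (P-P_0)+(I_n-P)$ with orthogonal ranges. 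This gives the displayed identity. For the equivalence with the $F$-distribution, I will write $B := T_2/(T_2+T_3)$ and note from~\eqref{eq:F-stat} that
\[
\mathsf{F} = \frac{n-p}{p-p_0}\cdot\frac{B}{1-B},
\]
which is a strictly increasing bijection of $(0,1)$ onto $(0,\infty)$. With $2G_2 \sim \chi^2_{p-p_0}$ and $2G_3\sim\chi^2_{n-p}$ independent, $B/(1-B) \eqd G_2/G_3$. So $\mathsf{F}\sim F_{p-p_0,n-p}$ if and only if $B\sim\mathrm{Beta}\bigl(\frac{p-p_0}{2},\frac{n-p}{2}\bigr)$, using the footnoted definition of the $F$-distribution.

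For part~(b), I will verify the Dirichlet hypothesis of~(a) under $H_0$, where~\eqref{eq:F-stat} applies. Let $U := \varepsilon/\|\varepsilon\|_2$, which is well defined since $\Pr(\varepsilon=0)=0$. Write $T = \bigl(U^\top P_0U,\,U^\top(P-P_0)U,\,U^\top(I_n-P)U\bigr) =: g(X,U)$. For any fixed $Q_0\in\mathcal{O}_{n\times n}$, the projections transform equivariantly: $P(Q_0X) = Q_0P(X)Q_0^\top$ and $P_0(Q_0X) = Q_0P_0(X)Q_0^\top$. Hence $g(Q_0X,Q_0U) = g(X,U)$.

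I will treat the two cases separately.
\begin{itemize}
\item \emph{$\varepsilon$ rotationally invariant.} Then $U\sim\Unif(\mathcal{S}^{n-1})$, independent of $X$. Condition on $X$ and choose an orthogonal $Q(X)$, measurable in $X$ (e.g.\ via Gram--Schmidt on the columns of $X$ followed by a completion), that maps $\mathrm{Im}(X_0)$, $\mathrm{Im}(X)\ominus\mathrm{Im}(X_0)$ and $\mathrm{Im}(X)^\perp$ onto the spans of the first $p_0$, the next $p-p_0$, and the last $n-p$ coordinate vectors. Since $Q(X)U \eqd U$ conditionally on $X$, $T$ has the law of $\bigl(\sum_{i\le p_0}U_i^2,\ \sum_{p_0<i\le p}U_i^2,\ \sum_{i>p}U_i^2\bigr)$. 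Writing $U = Z/\|Z\|_2$ with $Z\sim N_n(0,I_n)$ gives the claimed $\mathrm{Dirichlet}$ law via independent $\chi^2$ variables, i.e.\ Gammas with shapes $p_0/2$, $(p-p_0)/2$, $(n-p)/2$.
\item \emph{$X$ rotationally invariant.} Let $Q\sim\Unif(\mathcal{O}_{n\times n})$ be independent of $(X,U)$. Then $(QX,U)\eqd(X,U)$: conditionally on $U$, independence gives $X$ its unconditional law, and rotational invariance gives $QX\eqd X$. So $T \eqd g(QX,U) = g(X,Q^\top U)$. Conditionally on $(X,U)$, $Q^\top U\sim\Unif(\mathcal{S}^{n-1})$, so this reduces to the first case.
\end{itemize}

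For the Gaussian example, $X = ZΣ_X^{1/2}$ with $Z$ having i.i.d.\ $N(0,1)$ entries. Then $Q_0X = (Q_0Z)\Sigma_X^{1/2}\eqd X$, so $X$ is rotationally invariant. It also has full column rank almost surely since $n>p$.

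The main obstacle is the rigour of the $X$-invariant case. It requires a measurable choice of rotation, which I will sidestep via the Haar randomisation above, and careful independence bookkeeping so that $Q^\top U$ is uniform and independent of $X$. I also need to check that the $p_0=0$ convention (a degenerate Dirichlet component equal to $0$) is handled consistently throughout.
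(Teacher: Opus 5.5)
Your proposal is correct and follows essentially the same route as the paper. Part~(a) uses the Gamma/chi-squared representation of the Dirichlet law together with the monotone bijection between $T_2/(T_2+T_3)$ and $\mathsf{F}$. Part~(b) first treats a uniformly distributed $\varepsilon/\|\varepsilon\|_2$, where your rotation onto coordinate blocks amounts to the Cochran's theorem step the paper invokes directly, and then reduces the rotationally invariant $X$ case to it via the equivariance $g(Q_0X,Q_0U)=g(X,U)$.

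The only cosmetic difference is in that reduction. You transfer the rotation from $X$ to $U$ through an independent Haar-distributed $Q$. The paper instead fixes unit vectors $v,w$, chooses $Q_0$ with $Q_0^\top v=w$ to show that the law of $(\|P_jv\|_2^2)_j$ does not depend on $v$, and then replaces $\xi$ by an independent uniform $\xi^*$.
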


Proposition~\ref{prop:spherical-sym-A}\textit{(a)} is a standard probabilistic result that motivates the form of Theorem~\ref{thm:dK-W1-A} below, while our main statistical interest lies in part~\textit{(b)}, whose extension in Proposition~\ref{prop:spherical-sym}\textit{(b)} generalises \citet[][Lemma~1]{wen2025residual}. To provide some intuition for this result, it is instructive to consider the simple linear regression case $p = 1$. Here,
\[
\mathsf{F} = (n - 1)\frac{(\tilde{X}^\top\tilde{\varepsilon})^2}{1 - (\tilde{X}^\top\tilde{\varepsilon})^2}
\]
is proportional to the square of the cotangent of the angle between $X$ and $\varepsilon$, where $\tilde{X} := X/\norm{X}_2$ and $\tilde{\varepsilon} := \varepsilon/\norm{\varepsilon}_2$. When $\varepsilon \sim N_n(0,\sigma^2 I_n)$, it is well-known that $\mathsf{F} \sim F_{1,n-1}$. More generally, $\mathsf{F} \sim F_{1,n-1}$ whenever $\varepsilon/\norm{\varepsilon}_2 \sim \Unif(\mathcal{S}^{n-1})$, and therefore by the symmetry of the expression for $\mathsf{F}$ in $X$ and $\varepsilon$, we see that $\mathsf{F} \sim F_{1,n-1}$ when $X/\norm{X}_2 \sim \Unif(\mathcal{S}^{n-1})$, i.e.~when $X$ has a rotationally invariant distribution. Proposition~\ref{prop:spherical-sym-A}\textit{(b)} shows that the exactness continues to hold even when $p \geq 2$ and the definition~\eqref{eq:F-stat} of the $F$-statistic is no longer symmetric in $X$ and~$\varepsilon$.

\subsection{\texorpdfstring{H\"older continuity bounds on the size of the $F$-test}{H\"older continuity bounds on the size of the F-test}}
\label{sec:upper-bds}

While it is interesting to see that the $F$-test remains exact in wider generality than typically presented in textbooks, these settings remain limited.  Our main results therefore seek to study the robustness of the $F$-test to departures of the joint distribution of $(X,\varepsilon)$ from cases where $\mathsf{F} \sim F_{p-p_0,n-p}$ under $H_0$.  To this end, the Kolmogorov distance 
\[
d_\mathrm{K}(\mathsf{F},\mathsf{F}^*) := \sup_{t \in \R} |\Pr(\mathsf{F} \leq t) - \Pr(\mathsf{F}^* \leq t)|
\]
is a measure of distributional proximity between the $F$-statistic $\mathsf{F}$ and the notional $\mathsf{F}^* \sim F_{p-p_0, n-p}$. This measure is particularly relevant for our purposes because if for a given $\alpha \in (0,1)$ we calibrate the $F$-test using the upper $\alpha$-quantile $q_\alpha^*$ of the $F_{p-p_0, n-p}$ distribution, then the size of the test satisfies
\begin{equation}
\label{eq:size-dK}
\Pr(\mathsf{F} > q_\alpha^*) \leq \Pr(\mathsf{F}^* > q_\alpha^*) + d_\mathrm{K}(\mathsf{F},\mathsf{F}^*) = \alpha + d_\mathrm{K}(\mathsf{F},\mathsf{F}^*).
\end{equation}
Thus, $d_\mathrm{K}(\mathsf{F},\mathsf{F}^*)$ controls the discrepancy between the actual Type~I error of the $F$-test and its nominal level. Theorem~\ref{thm:dK-W1-A} below establishes the H\"older continuity of the Kolmogorov distance above in terms of the $L^1$-Wasserstein distance between the distributions of $T$ and $T^* \sim \mathrm{Dirichlet}\bigl(\frac{p_0}{2},\frac{p - p_0}{2},\frac{n - p}{2}\bigr)$.

Recalling the definition of the beta function $\mathrm{B}(a,b) := \Gamma(a)\Gamma(b)/\Gamma(a+b)$ for $a,b > 0$, define
\[
B := \mathrm{B}\Bigl(\frac{p - p_0}{2}, \frac{n - p}{2}\Bigr) \quad \text{and} \quad B_j := \mathrm{B}\Bigl(\frac{p_0}{2}, \frac{n - p_0 - j}{2}\Bigr),
\]
for $j \in \{0,1,2\}$, when $p_0$ and $n - p_0 - j$ are positive integers. It will also be convenient to define
\[
D_1 \equiv D_1(n,p_0) := \frac{B_1}{B_0},
\]
with the interpretation that $D_1(n,0) := \lim_{x \searrow 0} D_1(n,x) = 1$. Further, write $g_{a,b}$ for the density of the $\mathrm{Beta}(a,b)$ distribution, so that 
\begin{equation}
\label{Eq:BetaDensity}
g_{a,b}(x) := \frac{x^{a-1}(1-x)^{b-1}}{\mathrm{B}(a,b)}
\end{equation}
for $x \in (0,1)$.  For $a,b \geq 1$, define 
\[
\mathrm{M}(a,b) := \sup_{x \in (0,1)} g_{a,b}(x) < \infty,
\]
and let $M := \mathrm{M}\bigl(\frac{p - p_0}{2},\frac{n - p}{2}\bigr)$ when $\min(p-p_0,n-p) \geq 2$.

\begin{theorem}
\label{thm:dK-W1-A}
Under $H_0$, we have
\begin{equation}
\label{eq:dK-W1}
\dK(\mathsf{F},\mathsf{F}^*) \leq C(n,p,p_0) \cdot
\begin{cases}
\W_1(nT,nT^*)^{1/3} \;\;&\text{if }k = 1 \\
\W_1(nT,nT^*)^{1/2} \;\;&\text{if }k \geq 2,
\end{cases}
\end{equation}
where $k := \min(p - p_0, n - p)$ and
\begin{align}
\label{eq:W1-constant}
C(n,p,p_0) &:= 
\begin{cases}
3 \cdot 2^{-1/2}\Bigl(\dfrac{\pi D_1}{n^{1/2}B}\Bigr)^{2/3} \leq \dfrac{3\pi^{1/3}}{2^{5/6}} \;\; &\text{if $k = 1$} \\[8pt]
2^{5/4}\Bigl(\dfrac{(n - 2)M}{n(n - p_0 - 2)}\Bigr)^{1/2} < 2^{3/4} \;\; &\text{if $k \geq 2$.}
\end{cases}
\end{align}
\end{theorem}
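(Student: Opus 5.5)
We would reduce the comparison of $\mathsf{F}$ and $\mathsf{F}^*$ to a comparison of the scalar ratios $R := T_2/(T_2+T_3)$ and $R^* := T_2^*/(T_2^*+T_3^*)$. Since $\mathsf{F} = \frac{n-p}{p-p_0}\cdot\frac{R}{1-R}$ is a strictly increasing function of $R$, we have $\dK(\mathsf{F},\mathsf{F}^*) = \dK(R,R^*)$. By Proposition~\ref{prop:spherical-sym-A}\textit{(a)}, $R^* \sim \mathrm{Beta}\bigl(\frac{p-p_0}{2},\frac{n-p}{2}\bigr)$, with density $g := g_{(p-p_0)/2,(n-p)/2}$.

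We would then use the standard smoothing argument. Take an optimal (or near-optimal) coupling of $(nT,nT^*)$. For any $t$ and $\delta>0$,
\[
\Pr(R\le t) \le \Pr(R^*\le t+\delta) + \Pr(|R-R^*|>\delta),
\]
and similarly for the lower bound, so
\[
\dK(R,R^*) \le \sup_t \Pr\bigl(R^*\in(t-\delta,t+\delta]\bigr) + \Pr(|R-R^*|>\delta).
\]
The first term is the anti-concentration term. When $k\ge2$, both Beta parameters are at least $1$, so $g\le M$ and the term is at most $2M\delta$. When $k=1$, $g$ is unbounded at an endpoint like $x^{-1/2}$ (or $(1-x)^{-1/2}$), and the best bound is $\Pr(R^*\in I)\lesssim |I|^{1/2}/B$, obtained by integrating $x^{-1/2}$. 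This lost half-power is what produces the exponent $1/3$.

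The second term, $\Pr(|R-R^*|>\delta)$, needs a stability estimate for the map $T\mapsto T_2/(T_2+T_3)$. This map is not Lipschitz: its gradient is of order $1/(1-T_1)$. We would bound
\[
|R-R^*| \le \frac{|T_2-T_2^*|+|T_3-T_3^*|}{1-T_1^*}\wedge 1,
\]
using the identity $R-R^* = \frac{(T_2-T_2^*)(1-T_1^*)-T_2^*\{(1-T_1)-(1-T_1^*)\}}{(1-T_1)(1-T_1^*)}$. The denominator is placed on the Dirichlet side so that its law is known: $1-T_1^*\sim\mathrm{Beta}\bigl(\frac{n-p_0}{2},\frac{p_0}{2}\bigr)$. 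Markov's inequality then gives a bound of order $\delta^{-1}\E\bigl[\|T-T^*\|_1/(1-T_1^*)\bigr]$.

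The main obstacle is controlling $\E\bigl[\|T-T^*\|_1/(1-T_1^*)\bigr]$ by $\W_1(nT,nT^*)/n$ times a constant, since $\|T-T^*\|_1$ is correlated with $T_1^*$. We would use the independence of $T_1^*$ from the ratio coordinates under the Dirichlet law together with $\|T-T^*\|_1\le 2$. For example, truncate on $\{1-T_1^*\ge\eta\}$ and use the Beta tail of $T_1^*$, or compute directly $\E[(1-T_1^*)^{-1}] = B_2/B_0$-type ratios, which explains the constants $D_1$ and $(n-2)/(n-p_0-2)$. The ratio $D_1 = B_1/B_0$ suggests a square-root weighting, e.g.\ Cauchy--Schwarz with $\E[(1-T_1^*)^{-1/2}]$. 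We expect to end with
\[
\Pr(|R-R^*|>\delta) \le \frac{c(n,p_0)\,\W_1(nT,nT^*)}{n\delta}
\]
for an explicit $c(n,p_0)$.

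Finally we optimise over $\delta$. For $k\ge2$, minimising $2M\delta + cW/(n\delta)$ gives $\delta\asymp (W/(nM))^{1/2}$ and a bound of order $(MW/n)^{1/2}$, matching $C$. For $k=1$, minimising $a\delta^{1/2}+bW/\delta$ gives $\delta\asymp W^{2/3}$ and a bound of order $W^{1/3}$, with the stated factor $3\cdot2^{-1/2}(\cdot)^{2/3}$ coming from the standard two-term optimisation. The numerical upper bounds $3\pi^{1/3}/2^{5/6}$ and $2^{3/4}$ would follow from elementary Gamma-function inequalities (e.g.\ Gautschi-type bounds on $D_1$, and $M\lesssim n$).
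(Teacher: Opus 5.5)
There is a genuine gap at exactly the step you flag as the main obstacle, and none of your proposed remedies closes it when $p_0\ge 1$. (For $p_0=0$ your argument works, since then $T_1^*\equiv 0$.) Because you smooth at a deterministic scale $\delta$ in $R$, the random factor $1/(1-T_1^*)$ ends up in the coupling term. You then need $\E\{\norm{\Delta}_1/(1-T_1^*)\}\lesssim\E\norm{\Delta}_1$, where $\Delta:=T-T^*$ comes from a near-optimal coupling over which you have no control. Such a coupling may concentrate the discrepancy on $\{1-T_1^*<\eta\}$, in which case the left-hand side is at least $\E\norm{\Delta}_1/\eta$.

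Your suggested remedies do not get around this:
\begin{itemize}
\item The independence of $T_1^*$ and $R^*$ under the Dirichlet law does not help, because $\Delta$ is not a function of $R^*$.
\item Truncating on $\{1-T_1^*\ge\eta\}$ costs $\Pr(1-T_1^*<\eta)\asymp\eta^{m}$ with $m:=(n-p_0)/2$. Optimising over $\eta$ and $\delta$ then gives exponents $m/(2m+1)<1/2$ for $k\ge2$ and $m/(3m+1)<1/3$ for $k=1$.
\item Cauchy--Schwarz or H\"older splittings using $\norm{\Delta}_1\le 2$ lose a power of $\W_1$ in the same way.
\end{itemize}
So your route does not reach the stated exponents, let alone the constant.

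The missing idea is to smooth at the random scale $\delta/(1-T_1^*)$. This moves the denominator into the anti-concentration term, where only the known joint law of $(T_1^*,R^*)$ enters. Your identity gives $(1-T_1^*)(R-R^*)=\Delta_2+R\Delta_1$, with $|\Delta_2+R\Delta_1|\le 2^{1/2}\norm{\Delta}_2$. (The paper uses the threshold version $\{R\le s\}=\{R^*\le s-(s\Delta_1+\Delta_2)/(1-T_1^*)\}$.) Hence
\[
\Pr(R\le s)\le \Pr\Bigl(R^*\le s+\frac{\delta}{1-T_1^*}\Bigr)+\Pr\bigl(2^{1/2}\norm{\Delta}_2\ge\delta\bigr),
\]
and symmetrically from below.

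Now condition on $T_1^*=w$, which is independent of $R^*\sim\mathrm{Beta}\bigl(\frac{p-p_0}{2},\frac{n-p}{2}\bigr)$. Apply the one-sided Beta modulus of continuity at scale $\delta/(1-w)$ and integrate over $w$. This gives $\frac{\pi\delta^{1/2}}{B}\E\{(1-T_1^*)^{-1/2}\}=\frac{\pi D_1}{B}\delta^{1/2}$ when $k=1$, and $M\delta\,\E\{(1-T_1^*)^{-1}\}=\frac{(n-2)M}{n-p_0-2}\,\delta$ when $k\ge 2$. This is exactly where $D_1=B_1/B_0$ and $B_2/B_0=(n-2)/(n-p_0-2)$ come from. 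Markov's inequality on $\norm{\Delta}_2$ and optimisation over $\delta$ then reproduce $C(n,p,p_0)$ exactly.

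Two smaller points. Your two-sided window bound $2M\delta$ should be replaced by the one-sided $M\delta$. For the numerical bound $2^{3/4}$ you also need the sharp estimate $M\le(n-p_0-2)/2$, not merely $M\lesssim n$; the paper proves it via log-convexity of $a\mapsto\mathrm{M}(a,c-a)$.
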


Lemma~\ref{lem:dK-W1-constant} shows that 
\[
\frac{1}{9}\Bigl(\frac{n - p_0}{(p - p_0)(n - p)}\Bigr)^{1/4} < C(n,p,p_0) < 3\Bigl(\frac{n - p_0}{(p - p_0)(n - p)}\Bigr)^{1/4}
\]
in all cases, where
\[
\frac{1}{k} = \max\Bigl(\frac{1}{p - p_0},\frac{1}{n - p}\Bigr) \leq \frac{n - p_0}{(p - p_0)(n - p)} = \frac{1}{p - p_0} + \frac{1}{n - p} \leq 2.
\]
Thus, for all values of $n$, $p$ and $p_0$, the universal constant upper bound on $C(n,p,p_0)$ in~\eqref{eq:W1-constant} is tight up to a universal constant multiple of $k^{1/4} \leq (p - p_0)^{1/4}$.

Theorem~\ref{thm:dK-W1-A} immediately implies Proposition~\ref{prop:spherical-sym-A}\emph{(b)}, because if $T \sim \mathrm{Dirichlet}\bigl(\frac{p_0}{2},\frac{p - p_0}{2},\frac{n - p}{2}\bigr)$, then $\W_1(nT,nT^*) = 0$, so $\dK(\mathsf{F},\mathsf{F}^*) = 0$.  But the result also goes much further, controlling the maximal inflation of the Type~I error as the distribution of $T$ moves away from this Dirichlet distribution.  On the one hand, this provides a notion of robustness of the $F$-test to departures from the canonical setting of independent, homoscedastic Gaussian errors, but on the other, the H\"older continuity exponents in Theorem~\ref{thm:dK-W1-A} are relatively small, and Proposition~\ref{prop:dK-W1-lb} below establishes that they are unimprovable in general. Thus, as a very rough guide, even a perturbation of the joint distribution of $(X,\varepsilon)$ leading to $\W_1(nT,nT^*) \approx 10^{-4}$ could potentially yield a non-trivial Type~I error inflation of order $0.01$.  As further intuition, albeit that the settings are very different, sample paths of Brownian motion (which are generally thought of as very rough), are almost surely locally H\"older continuous with exponent $\gamma$ for every $\gamma \in (0,1/2)$, but not for $\gamma > 1/2$ \citep[e.g.][Corollary~1.20 and Exercise~1.9]{morters2010brownian}. 

To give some explanation of the difference between the exponents in the cases $k = 1$ and $k \geq 2$, recall that if $\mathscr{P},\mathscr{Q}$ are Borel probability measures on $\R$, with distribution functions $F,G$ respectively, then
\[
\dK(\mathscr{P},\mathscr{Q}) = \sup_{x \in \R} |F(x) - G(x)| = \norm{F - G}_\infty, \quad\;\; \W_1(\mathscr{P},\mathscr{Q}) = \int_{-\infty}^\infty |F(x) - G(x)|\,dx = \norm{F - G}_1.
\]
In Lemma~\ref{lem:holder-L1-Linfty}, we verify that if $G$ is $(\gamma,L)$-H\"older for some $\gamma \in (0,1]$ and $L > 0$, then
\begin{equation}
\label{eq:holder}
\norm{F - G}_\infty \leq L^{1/(\gamma + 1)}\Bigl(\frac{\gamma + 1}{\gamma} \norm{F - G}_1\Bigr)^{\gamma/(\gamma + 1)}.
\end{equation}
Moreover, this bound can be attained with equality, so the exponent $\gamma/(\gamma + 1)$ is the best possible. In the context of Theorem~\ref{thm:dK-W1-A}, it turns out that the relevant reference probability distribution is $\mathscr{Q} = \mathrm{Beta}(\frac{p - p_0}{2},\frac{n - p}{2})$. When $k = 1$, its distribution function $G^*$ is $\gamma$-H\"older with $\gamma = 1/2$ and hence $\gamma/(\gamma + 1) = 1/3$. On the other hand, if $k \geq 2$, then $\mathscr{Q}$ has a bounded density on $(0,1)$ and $G^*$ is therefore Lipschitz, so $\gamma = 1$ and $\gamma/(\gamma + 1) = 1/2$. These two cases are illustrated in Figure~\ref{fig:dK-W1}.  See \citet{gaunt2023bounding} for further inequalities relating Kolmogorov and integral probability metrics.

\begin{figure}
\centering
\resizebox{0.8\linewidth}{!}{%
\begin{tikzpicture}[
x=0.85cm,
y=0.85cm,
>={Stealth[length=3pt, width=4pt]},
line cap=round,
line join=round,
every node/.style={font=\small}
]

\def\wth{4}
\def\hgt{4}
\def\dlt{0.50}
\def\dlts{0.90}

\definecolor{softblue}{RGB}{220,235,250}
\definecolor{arrowblue}{RGB}{47,111,183}
\definecolor{arrowred}{RGB}{233,84,129}
\begin{scope}[xshift=-3.5cm]
\draw[->] (-0.15,0) -- (\wth+0.35,0);
\draw[->] (0,-0.15) -- (0,\hgt+0.35);

\draw[black, line width=1.25pt]
plot[smooth, domain=0.001:0.999, samples=160]
({\wth*(\x*\x)/(\x*\x+(1-\x)*(1-\x))},{\hgt*\x});

\pgfmathsetmacro{\uend}{sqrt(\dlt/\wth)/(1 + sqrt(\dlt/\wth))}
\pgfmathsetmacro{\yend}{\hgt*\uend}

\fill[softblue, opacity=0.75]
(0,0)
plot[smooth, domain=0.001:\uend, samples=100]
({\wth*(\x*\x)/(\x*\x+(1-\x)*(1-\x))},{\hgt*\x})
-- (\dlt,\yend)
-- (\dlt,0)
-- cycle;

\draw[gray!75, dashed]
(\dlt,0) -- (\dlt,\yend);

\pgfmathsetmacro{\uendR}{1 - sqrt(\dlt/\wth)/(1 + sqrt(\dlt/\wth))}
\pgfmathsetmacro{\yendR}{\hgt*\uendR}
\pgfmathsetmacro{\xendR}{\wth-\dlt}

\fill[arrowred, opacity=0.4]
(\wth,\hgt)
plot[smooth, domain=0.999:\uendR, samples=100]
({\wth*(\x*\x)/(\x*\x+(1-\x)*(1-\x))},{\hgt*\x})
-- (\xendR,\yendR)
-- (\wth,\yendR)
-- cycle;

\draw[gray!75, dashed]
(\xendR,\yendR) -- (\wth,\yendR);


\draw[<->, arrowblue]
(0,-0.9) -- (\dlt,-0.9)
node[midway, below=2pt] {$\delta$};

\draw[<->, arrowblue]
(-0.45,0) -- (-0.45,\yend)
node[midway, left=2pt] {$\asymp \delta^{1/2}$};

\draw[<->, arrowred]
({\wth-\dlt},{\hgt+0.2}) -- (\wth,{\hgt+0.2})
node[midway, above=2pt] {$\delta$};

\draw[<->, arrowred]
({\wth+0.3},{\yendR}) -- ({\wth+0.3},\hgt)
node[midway, right=2pt] {$\asymp \delta^{1/2}$};

\node[below=3pt] at (0,0) {$0$};
\node[below=3pt] at (\wth,0) {$1$};
\node[left=3pt] at (0,\hgt) {$1$};

\node[above=20pt] at (\wth/2,\hgt+0.15)
{{\color{arrowblue}{$p - p_0 = 1$}} \:or\: {\color{arrowred}$n - p = 1$}};
\end{scope}

\begin{scope}[xshift=3.5cm]
\draw[->] (-0.15,0) -- (\wth+0.35,0);
\draw[->] (0,-0.15) -- (0,\hgt+0.35);

\draw[black, line width=1.25pt]
plot[smooth, domain=0:1, samples=160]
({\wth*\x},{\hgt*(3*\x*\x-2*\x*\x*\x)});

\pgfmathsetmacro{\xL}{\wth/2}
\pgfmathsetmacro{\xR}{\wth/2+\dlt}
\pgfmathsetmacro{\uL}{\xL/\wth}
\pgfmathsetmacro{\uR}{\xR/\wth}
\pgfmathsetmacro{\yL}{\hgt*(3*\uL*\uL-2*\uL*\uL*\uL)}
\pgfmathsetmacro{\yR}{\hgt*(3*\uR*\uR-2*\uR*\uR*\uR)}

\fill[gray, opacity=0.3]
(\xL,\yL)
plot[smooth, domain=\uL:\uR, samples=100]
({\wth*\x},{\hgt*(3*\x*\x-2*\x*\x*\x)})
-- (\xR,\yL)
-- cycle;

\draw[gray!75, dashed]
(\xR,\yL) -- (\xR,\yR);

\draw[gray!75, dashed]
(\xL,\yL) -- (\xR,\yL);

\draw[<->]
(\xL,\yL-0.3) -- (\xR,\yL-0.3)
node[midway, below=2pt] {$\delta$};

\draw[<->]
(\xR+0.3,\yL) -- (\xR+0.3,\yR)
node[midway, right=2pt] {$\asymp \delta$};

\node[below=3pt] at (0,0) {$0$};
\node[below=3pt] at (\wth,0) {$1$};
\node[left=3pt] at (0,\hgt) {$1$};


\node[above=20pt] at (\wth/2,\hgt+0.15)
{$\min(p-p_0,n-p)\geq 2$};
\end{scope}
\end{tikzpicture}}
\caption{Plots illustrating the distribution function $G^*$ of $\mathrm{Beta}\bigl(\frac{p - p_0}{2},\frac{n - p}{2}\bigr)$ in the cases $p - p_0 = 1$ or $n - p = 1$ (\emph{left}), and $\min(p - p_0, n - p) \geq 2$ (\emph{right}), and the types of perturbations of $G^*$ yielding the optimal exponents of $1/3$ and $1/2$ respectively in~\eqref{eq:holder}.}
\label{fig:dK-W1}
\end{figure}

Although it is not the main focus of this work, Theorem~\ref{thm:dK-W1-A} can also be used to establish asymptotic Type~I error control of the $F$-test under second moment conditions and independence of errors and covariates; see Proposition~\ref{Prop:Asymptotic} and Corollary~\ref{cor:asymptoticF}.

The lower bounds in the following result confirm that the H\"older exponents obtained in Theorem~\ref{thm:dK-W1-A} cannot be improved in general, even when the rows $X_1,\dotsc,X_n$ of the design matrix~$X$ and the coordinates $\varepsilon_1,\dotsc,\varepsilon_n$ of the error vector $\varepsilon$ are required to be exchangeable.

\begin{proposition}
\label{prop:dK-W1-lb}
\begin{enumerate}[label=(\alph*),leftmargin=0.7cm]
\item 
For any $\eta > 0$, there exist an $n \times p$ design matrix $X$ and an $n$-dimensional error vector $\varepsilon$ such that $X$ and $\varepsilon$ are independent, $\W_1(nT,nT^*) < \eta$ and
\begin{equation}
\label{eq:dK-W1-lb}
\dK(\mathsf{F},\mathsf{F}^*) > \frac{C(n,p,p_0)}{9} \cdot
\begin{cases}
	\W_1(nT,nT^*)^{1/3} \;\;&\text{if }k = 1 \\
	\W_1(nT,nT^*)^{1/2} \;\;&\text{if }k \geq 2.
\end{cases}
\end{equation}
Here, $C(n,p,p_0) > 0$ is the multiplicative factor in Theorem~\ref{thm:dK-W1-A}.
\item For any $\eta > 0$, there exist $X$ and $\varepsilon$ satisfying all of the properties in~\textit{(a)} such that $(X_1,\varepsilon_1),\dotsc,(X_n,\varepsilon_n)$ are also exchangeable, and~\eqref{eq:dK-W1-lb} holds with $C(n,p,p_0)$ replaced by some $c > 0$ still depending only on $n,p,p_0$.
\end{enumerate}
In both parts, we can also ensure that $\Pr(\mathsf{F} > q) \geq \Pr(\mathsf{F}^* > q)$ for all $q > 1$.
\end{proposition}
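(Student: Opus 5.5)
The plan is to work with the scalar $R := T_2/(T_2+T_3)$, whose reference law is $R^* \sim \mathrm{Beta}\bigl(\frac{p-p_0}{2},\frac{n-p}{2}\bigr)$ by Proposition~\ref{prop:spherical-sym-A}\textit{(a)}. Since $\mathsf{F} = \frac{n-p}{p-p_0}\cdot\frac{R}{1-R}$ is strictly increasing in $R$, we have $\dK(\mathsf{F},\mathsf{F}^*) = \dK(R,R^*)$. We also have $\Pr(\mathsf{F}>q) = \Pr(R > r_q)$, where $r_q$ is the image of $q$.

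For part~\textit{(a)} we take $X$ deterministic, with full column rank and $X_0=X_{[p_0]}$. A deterministic $X$ is trivially independent of $\varepsilon$, and $P$, $P_0$ are then fixed. Let $U \sim \Unif(\mathcal{S}^{n-1})$, so that $T(U) \eqd T^*$. We define $\varepsilon := \Phi(U)$, where $\Phi$ rescales only the components $(P-P_0)U$ and $(I_n-P)U$, leaves $P_0U$ unchanged, and is the identity outside a small event. This gives an explicit coupling of $T$ and $T^*$. The resulting bound $\W_1(nT,nT^*) \le n\,\E\|T(\Phi(U)) - T(U)\|_2$ is then a one-dimensional integral against the beta density, while $\dK(R,R^*)$ is read off from the moved mass.

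The perturbations follow Figure~\ref{fig:dK-W1}. We treat three cases, with $\delta>0$ small throughout.
\begin{enumerate}[label=(\roman*)]
\item If $p-p_0=1$, we map $\{R(U)\le\delta\}$ to $\{R=\delta\}$ by inflating $(P-P_0)U$ while keeping $\|(I_n-P_0)U\|_2$ fixed, so $T_1$ is unchanged. The moved mass is $G^*(\delta)\asymp \delta^{1/2}$ and each point moves by at most $\delta$, so $\W_1 \lesssim n\,\delta\cdot\delta^{1/2}$. Meanwhile $\dK \ge G^*(\delta^-)-0 \asymp \delta^{1/2} = (\delta^{3/2})^{1/3}$.
\item If $n-p=1$, we do the same near $R=1$, pushing $\{R \ge 1-\delta\}$ to $\{R=1\}$ by deflating $(I_n-P)U$.
\item If $k\ge 2$, we choose $x_0$ with $g(x_0)$ comparable to $M$ and push $\{R\in[x_0-\delta,x_0]\}$ to $\{R=x_0\}$. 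Then the moved mass is $\asymp M\delta$ and $\W_1\lesssim nM\delta^2$, which gives $\dK \gtrsim (M\,\W_1/n)^{1/2}$.
\end{enumerate}
All maps push mass upwards, so $R$ stochastically dominates $R^*$. This yields $\Pr(\mathsf{F}>q)\ge\Pr(\mathsf{F}^*>q)$ for every $q$, and in particular for $q>1$. In case~(ii) the upward move near $1$ is also consistent with this. Taking $\delta$ small makes $\W_1 < \eta$.

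To obtain the explicit factor $C(n,p,p_0)/9$, we compute the constants exactly. When $p-p_0=1$, we have $G^*(\delta) \ge \delta^{1/2}(1-\delta)^{(n-p)/2}\cdot 2/(B\,\cdot 1)$ roughly. The displacement of $T$ in Euclidean norm is at most $\sqrt2\,\delta$ times the factor $\|(I_n-P_0)U\|_2^2$, whose mean $(n-p_0)/n$ relates to $D_1$. We then compare with the definition of $C$, using the two-sided bounds of Lemma~\ref{lem:dK-W1-constant} to absorb the remaining constants into the $1/9$. I expect this constant bookkeeping to be tedious but routine.

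For part~\textit{(b)}, the difficulty is that permuting rows of $(X,\varepsilon)$ jointly by a common random permutation destroys independence. Instead, we take the rows of $X$ i.i.d.\ and $\varepsilon$ exchangeable, independently of $X$. We build the perturbation into $\varepsilon$ through a permutation-equivariant map applied to $U$. First we try letting the rows of $X$ be i.i.d.\ from a discrete-plus-Gaussian mixture, so that with probability bounded below (depending on $n,p$) the design $X$ is a row permutation of a fixed matrix $X^\circ$. Next we let $\varepsilon = \Pi\,\Phi_{X^\circ}(\Pi^{-1}U)$ symmetrised over $\Pi$; more simply, $\varepsilon$ is the uniform mixture over permutations of the construction from part~\textit{(a)}. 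On the good event for $X$, this reproduces the part~\textit{(a)} perturbation with probability at least $1/n!$, while elsewhere $T$ is only mildly affected. Controlling that the perturbation does not increase $\W_1$ beyond order $\delta^{3/2}$ (respectively $\delta^2$) on the bad event is the main obstacle. I would handle it by making the perturbation move $U$ only when $U$ lies in a small neighbourhood tied to $X^\circ$, so that on the bad event displacements are still $O(\delta)$ on a set of probability $O(\delta^{1/2})$ (respectively $O(\delta)$). This preserves the exponents at the cost of replacing $C(n,p,p_0)$ by an unspecified $c(n,p,p_0)>0$.
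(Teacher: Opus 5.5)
Your part~(a) is essentially the paper's construction. You use a deterministic coordinate design, and obtain $\varepsilon$ from a uniform vector by rescaling its $(P-P_0)$- and $(I_n-P)$-components so that a slab of beta mass is pushed onto a point. Two variations are harmless: you shift $R$ by at most $\delta$ rather than by $\delta/(1-T_1^*)$, and you always push upwards. The constants leave ample room for the factor $1/9$. Part~(b), however, has a genuine gap.

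\textbf{The displacement claim fails for non-aligned designs.} You claim that on the bad event the displacements remain $O(\delta)$. This is false when $k=1$. To move the $\asymp\delta^{1/2}$ of mass of $R^*$ lying within $\delta$ of the endpoint where its density blows up, the part~(a) map must take a coordinate of $U$ of size $\lesssim\delta^{1/2}$ to size $\asymp\delta^{1/2}$ (or kill it, when $n-p=1$). So $\norm{\varepsilon-U}_2\asymp\delta^{1/2}$ on an event of probability $\asymp\delta^{1/2}$. This is harmless only when $\mathrm{Im}(X_0)$ and $\mathrm{Im}(X)$ are coordinate subspaces. Then $T$ is a linear function of $(\varepsilon_1^2,\dotsc,\varepsilon_n^2)$, whose $\ell_1$-change is $O(\delta)$. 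Your Gaussian rows produce generic column spaces instead. For example, with $p_0=0$, $p=1$ and $x:=X/\norm{X}_2$,
\[
(x^\top\varepsilon)^2-(x^\top U)^2\approx 2x_i(\varepsilon_i-U_i)\,x^\top U,
\]
which is of order $\delta^{1/2}$. Your coupling then gives only $\W_1(nT,nT^*)\lesssim\delta$ against $\dK\asymp\delta^{1/2}$, i.e.\ $\dK\gtrsim\W_1^{1/2}$ rather than $\W_1^{1/3}$. Separately, i.i.d.\ rows from a law with an atom make $X$ rank deficient with positive probability when $p\ge2$. This is why the paper takes $X=(\eta_1\Pi^*e_1\;\cdots\;\eta_p\Pi^*e_p)$: its rows are exchangeable but not i.i.d., and its column spaces are always coordinate subspaces.

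\textbf{Symmetrising the part~(a) map is not enough, even with aligned $X$.} First, you only address $\W_1$. The misaligned copies carry all but a fraction $1/\{\binom np\binom p{p_0}\}$ of the weight, and you never show they do not cancel the Kolmogorov gain of the aligned copy. Second, the final assertion genuinely fails. Take $p_0=0$, $p=1$, aligned $X$, and let $\Phi$ push $U_1^2\in[0,\delta]$ up to $\delta$. With probability $(n-1)/n$ the design reads off a coordinate $j\neq1$ of $\Phi(U)$. There $T_2=U_j^2(1-\delta)/(1-U_1^2)<U_j^2$ on $\{U_1^2<\delta\}$, while the aligned push never crosses level $\delta$. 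Hence $\Pr(\mathsf F>q)<\Pr(\mathsf F^*>q)$ for every $q>1$ once $\delta<1/n$.

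\textbf{The missing idea.} You need a single permutation-equivariant perturbation of $Y^*=(\varepsilon_1^{*2},\dotsc,\varepsilon_n^{*2})\sim\mathrm{Dirichlet}(\tfrac12,\dotsc,\tfrac12)$ that acts correctly on every coordinate subspace simultaneously. The paper's map $\mathcal T$ of Lemma~\ref{lem:simplex-proj} does this. It projects radially outward from $u^*=(1/n,\dotsc,1/n)$ onto the complement of $\bigcup_\Pi\{u:|r(\Pi u)-c|<h\}$. It has three properties:
\begin{itemize}
\item it moves points by $O(h)$ in $\ell_1$;
\item it empties the band for all $\Pi$ at once, up to a boundary event controlled by Lemma~\ref{lem:dirichlet-conditional}, so $\dK\ge\eta/4$ while $\W_1(nT,nT^*)\lesssim h\eta$;
\item each map $\lambda\mapsto r\bigl(\Pi(u+\lambda(u-u^*))\bigr)$ moves monotonically away from $(p-p_0)/(n-p_0)$, so every ratio above that value increases, which gives the final assertion.
\end{itemize}
Random signs then turn $\mathcal T(Y^*)$ into an exchangeable error vector.
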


Examples~\ref{ex:exchangeable-lb-1} and~\ref{ex:exchangeable-lb-2} in the appendix present more explicit probabilistic constructions than the geometric construction employed in the proof of Proposition~\ref{prop:dK-W1-lb}\emph{(b)}, but still respect the exchangeability and independence conditions in that result.  For general $n$, these treat the cases $(p_0,p) = (0,1)$ and $(p_0,p) = (0,2)$ respectively, corresponding to examples of the two cases in~\eqref{eq:dK-W1-lb}.  In particular, Example~\ref{ex:exchangeable-lb-1} illustrates the final assertion of Proposition~\ref{prop:dK-W1-lb}, namely that the $F$-test can be anti-conservative, i.e.~the size can exceed the nominal level.

Theorem~\ref{thm:dK-W1-A} and Proposition~\ref{prop:dK-W1-lb} bound a distance between complicated ratios in the definitions of $\mathsf{F}$ and $\mathsf{F}^*$ in terms of a distance between more primitive quantities, but these still depend jointly on $X$ and $\varepsilon$. In Theorem~\ref{thm:doubly-robust-A} below, we therefore provide a more interpretable bound involving only distances between the marginal distributions of appropriately normalised versions of $X$ and $\varepsilon$.

\begin{theorem}
\label{thm:doubly-robust-A}
Suppose that $X$ and $\varepsilon$ are independent, and let $C_{\varepsilon},C_X \geq 1$ be such that
\begin{equation}
\label{eq:exp-proj-eval-S}
\norm{\E(P)}_{\mathrm{op}}
\leq \frac{C_X p}{n}
\quad\text{and}\quad 
\norm{\E(\xi\xi^\top)}_{\mathrm{op}}
\leq \frac{C_\varepsilon}{n},
\end{equation}
where $\xi := \varepsilon/\norm{\varepsilon}_2$. Let $\xi^* \sim \Unif(\mathcal{S}^{n-1})$, $Q^* \sim \Unif(\mathcal{O}_{n \times p})$, $P^* := Q^*{Q^*}^\top$ and $P_0^* := Q_{[p_0]}^*{Q_{[p_0]}^*}^\top$. Writing
\begin{align*}
A := \min\biggl\{(C_X p)^{1/2}\,\W_2(\xi,\xi^*),\,
C_\varepsilon^{1/2}\,\W_2\biggl(\binom{P_0}{P - P_0}, \binom{P_0^*}{P^* - P_0^*}\biggr)\biggr\},
\end{align*}
we have
\begin{equation}
\label{eq:doubly-robust}
\W_1(nT,nT^*) \leq 2^{1/2}(A^2 + 2p^{1/2}A).
\end{equation}
Moreover,
\[
\W_2\biggl(\binom{P_0}{P - P_0}, \binom{P_0^*}{P^* - P_0^*}\biggr) \leq 2^{1/2}\inf_{Q \in \mathcal{Q}} \W_2(Q,Q^*),
\]
where $\mathcal{Q}$ denotes the set of all random $Q$ taking values in $\mathcal{O}_{n \times p}$ such that $\mathrm{Im}(Q_{[p_0]}) = \mathrm{Im}(X_{[p_0]})$ and $\mathrm{Im}(Q) = \mathrm{Im}(X)$.
\end{theorem}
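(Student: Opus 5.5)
The plan is to couple both sides and write each coordinate of $nT$ as a quadratic form. Note that
\[
nT = n\bigl(\xi^\top P_0\xi,\; \xi^\top(P-P_0)\xi,\; \xi^\top(I_n-P)\xi\bigr).
\]
By independence of $X$ and $\varepsilon$ together with Proposition~\ref{prop:spherical-sym-A}\textit{(b)}, both $(P_0,P,\xi^*)$ with $\xi^*$ independent of $X$, and $(P_0^*,P^*,\xi)$ with $Q^*$ independent of $\xi$, produce a vector with the law of $T^*$. This gives two routes to $T^*$, and the two terms inside the minimum in $A$ come from them. Since $T_3 = 1-T_1-T_2$ and likewise for $T^*$, the Frobenius norm of the difference in the third coordinate is controlled by the first two; this should account for the factor $2^{1/2}$.

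\emph{Route 1 (perturb $\xi$).} Take an optimal $\W_2$-coupling $(\xi,\xi^*)$ and make it independent of $X$. For a projection $R\in\{P_0,P-P_0\}$ write
\[
\xi^\top R\xi - {\xi^*}^\top R\xi^* = (\xi-\xi^*)^\top R(\xi-\xi^*) + 2(\xi-\xi^*)^\top R\xi^*.
\]
The first term is at most $\norm{\xi-\xi^*}_2^2$ in absolute value. Summed over the coordinates, its contribution to $n\E$ is bounded by $n\W_2(\xi,\xi^*)^2$ (using $P_0 + (P-P_0)\preceq I$), though the scaling still needs checking against $C_Xp\,\W_2^2$. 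For the cross term, apply Cauchy--Schwarz:
\[
\E|(\xi-\xi^*)^\top R\xi^*| \le \W_2\,\bigl(\E\,{\xi^*}^\top R\xi^*\bigr)^{1/2}.
\]
Conditioning on $X$ gives $\E\,{\xi^*}^\top R\xi^* = \E\tr(R)/n \le p/n$.

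To obtain the stated form $A^2 + 2p^{1/2}A$ with $A = (C_Xp)^{1/2}\W_2$, the cross and quadratic terms should instead be organised via $\norm{R(\xi-\xi^*)}_2$. Conditioning on $(\xi,\xi^*)$ gives
\[
\E\norm{P(\xi-\xi^*)}^2 \le \norm{\E P}_{\mathrm{op}}\,\E\norm{\xi-\xi^*}^2 \le (C_Xp/n)\W_2^2 .
\]
Then
\[
n\E|\cdot| \le n\E\norm{R(\xi-\xi^*)}^2 + 2n\,\bigl(\E\norm{R(\xi-\xi^*)}^2\bigr)^{1/2}\bigl(p/n\bigr)^{1/2} \lesssim A^2 + 2p^{1/2}A.
\]
Combining the two projections $P_0$ and $P-P_0$ uses $\norm{P_0v}^2 + \norm{(P-P_0)v}^2 = \norm{Pv}^2$.

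\emph{Route 2 (perturb the projections).} Couple $(P_0,P-P_0)$ optimally with $(P_0^*,P^*-P_0^*)$ in $\W_2$, independently of $\xi$. Write $\Delta_R := R - R^*$, so that
\[
\xi^\top R\xi - \xi^\top R^*\xi = \xi^\top\Delta_R\xi .
\]
Bound this using $\E(\xi^\top\Delta\xi)^2$-type terms with $\norm{\E\xi\xi^\top}_{\mathrm{op}}\le C_\varepsilon/n$, for example
\[
\E\norm{\Delta\xi}^2 = \E\tr\bigl(\Delta^\top\Delta\,\E\xi\xi^\top\bigr) \le (C_\varepsilon/n)\,\E\norm{\Delta}_\mathrm{F}^2 .
\]
The aim is to get $A^2 + 2p^{1/2}A$ with $A = C_\varepsilon^{1/2}\W_2(\cdot)$, via the split
\[
\xi^\top R\xi - \xi^\top R^*\xi = \xi^\top \Delta^\top \Delta\xi\text{-like term} + 2\,\xi^\top R^*\Delta\xi\text{-like term}.
\]
For projections one can write $R - R^* = -\Delta^\top\Delta + \Delta^\top R + R\Delta$-type identities, or simply use $|\xi^\top\Delta\xi| \le \norm{\Delta\xi}$ combined with Cauchy--Schwarz and the bound $\E\norm{R^*\xi}^2 \le p/n$.

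\emph{Final claim.} Given any $Q\in\mathcal{Q}$ coupled with $Q^*$, the projections are $P_0 = Q_{[p_0]}Q_{[p_0]}^\top$ and $P = QQ^\top$. Since
\[
QQ^\top - Q^*{Q^*}^\top = (Q-Q^*)Q^\top + Q^*(Q-Q^*)^\top,
\]
the Frobenius norm of each is at most $\norm{Q-Q^*}_\mathrm{F}$ per term. The stacked difference $\bigl(P_0-P_0^*,\,(P-P_0)-(P^*-P_0^*)\bigr)$ then needs to be bounded by $2^{1/2}\norm{Q-Q^*}_\mathrm{F}$. I would attempt this through the block-column decomposition of $Q$ into its first $p_0$ columns and the remaining columns, using $\norm{AB^\top}_\mathrm{F}\le\norm{A}_\mathrm{F}$ when $B$ has orthonormal columns.

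\emph{Main obstacle.} The hardest step is pinning the exact constants in this last bound and in Route 2's decomposition. For the final claim, the crude triangle inequality gives $2$ rather than $2^{1/2}$, so the first thing to try is an orthogonality argument: expand $\norm{UU^\top - VV^\top}_\mathrm{F}^2 = 2k - 2\norm{U^\top V}_\mathrm{F}^2$ and compare it with $\norm{U-V}_\mathrm{F}^2 = 2k - 2\tr(U^\top V)$, then sum over the two blocks.
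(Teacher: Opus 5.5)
Your proposal is correct and follows the paper's proof essentially step for step. The shared ingredients are: an optimal $(\xi,\xi^*)$ coupling independent of $X$; an optimal coupling of the projection pairs independent of $\xi$; the square-plus-cross-term split with Cauchy--Schwarz; the identities $\E\|R\xi^*\|_2^2=\E\|R^*\xi\|_2^2=\tr(R)/n$; the two operator-norm conditions; the factor $2^{1/2}$ from $T_3=1-T_1-T_2$; and, for the last claim, the comparison of $2k-2\|U^\top V\|_{\mathrm F}^2$ with $2k-2\tr(U^\top V)$.

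To finish, note which of your hedged options are needed.
\begin{itemize}
\item In Route~2, commit to $\xi^\top(R-R^*)\xi=\|(R-R^*)\xi\|_2^2+2(R^*\xi)^\top(R-R^*)\xi$, which is the paper's choice. Putting $R$ in the cross term costs an extra $C_\varepsilon^{1/2}$, because $\E\|R\xi\|_2^2$ is only bounded by $C_\varepsilon\tr(R)/n$. The crude bound $|\xi^\top\Delta\xi|\le\|\Delta\xi\|_2$ gives $n^{1/2}$ in place of $p^{1/2}$.
\item Replace your $\lesssim$ by exact constants by summing the two coordinates with $\sum_j(\tr(R_j)a_j)^{1/2}\le p^{1/2}(\sum_j a_j)^{1/2}$.
\item Close the final claim with $\|U^\top V-I_k\|_{\mathrm F}^2\ge0$, i.e.\ $\|U^\top V\|_{\mathrm F}^2\ge 2\tr(U^\top V)-k$. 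This yields $\|UU^\top-VV^\top\|_{\mathrm F}^2\le 2\|U-V\|_{\mathrm F}^2$ on each block, and summing over the two blocks gives the factor $2^{1/2}$ rather than $2$.
\end{itemize}
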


Recall from Proposition~\ref{prop:spherical-sym-A}\textit{(b)} that $\W_1(nT,nT^*) = 0$ when at least one of $X$ or $\varepsilon$ has a rotationally invariant distribution. Theorem~\ref{thm:doubly-robust-A} controls $\W_1(nT,nT^*)$ more generally in terms of the quantity $A$, which is a \emph{minimum} of two $L^2$-Wasserstein distances that quantify departures from rotational invariance for $\varepsilon$ and $X$ individually. Since $\xi,\xi^*$ and all columns of $Q \in \mathcal{Q}$ and $Q^*$ take values in $\mathcal{S}^{n-1}$, we have $\norm{\xi - \xi^*}_2 \leq 2$ and $\norm{Q - Q^*}_{\mathrm{F}} \leq \norm{Q}_{\mathrm{F}} + \norm{Q^*}_{\mathrm{F}} = 2p^{1/2}$ almost surely. Therefore, $A \leq 2p^{1/2}\min(C_X,2C_\varepsilon)^{1/2}$, so Theorem~\ref{thm:doubly-robust-A} reveals that $\W_1(nT,nT^*)$ is Lipschitz in $A$. The main interest lies in settings where $A$ is small, i.e.~in small departures from cases where the $F$-test is exact, in which case the second term of order $A$ on the right-hand side of~\eqref{eq:doubly-robust} is the dominant one. The combination of Theorems~\ref{thm:dK-W1-A} and~\ref{thm:doubly-robust-A} provides a precise sense in which the $F$-test is doubly robust, because under the conditions of these results, the Type~I error of the $F$-test is guaranteed to be close to its nominal level as soon as either the distribution of $\xi$ is close to uniform on~$\mathcal{S}^{n-1}$, or there exists $Q \in \mathcal{Q}$ whose distribution is close to that of $Q^* \sim \Unif(\mathcal{O}_{n \times p})$. Here, the distribution of the projection matrix $P^* = Q^*{Q^*}^\top$ is characterised by the invariance property $Q_0 P^*Q_0^\top \eqd P^*$ for all $Q_0 \in \mathcal{O}_{n \times n}$ \citep[see e.g.][Section~6.3]{eaton2007multivariate}.

\begin{remark*}
For any $Q,Q' \in \mathcal{Q}$, we have $Q = Q'\Lambda$ for some matrix of the form $\Lambda = 
\bigl(\begin{smallmatrix}
\Lambda_1 & 0 \\
0 & \Lambda_2
\end{smallmatrix}
\bigr)$
with $\Lambda_1 \in \mathcal{O}_{p_0 \times p_0}$ and $\Lambda_2 \in \mathcal{O}_{(p - p_0) \times (p - p_0)}$. One element of $\mathcal{Q}$ corresponds to applying the Gram--Schmidt procedure to the columns of $X$, namely the matrix $Q \in \mathcal{O}_{n \times p}$ in the QR decomposition $X = QR$, which is unique (when $R$ has positive diagonal entries) and satisfies $\mathrm{Im}(Q_{[j]}) = \mathrm{Im}(X_{[j]})$ for every $j \in [p]$ \citep[e.g.][Proposition~5.2]{eaton2007multivariate}.

Since $1 = \E(\norm{\xi}_2^2) = \tr\E(\xi\xi^\top) \leq n\norm{\E(\xi\xi^\top)}_{\mathrm{op}}$, we must have $C_\varepsilon \geq 1$ in~\eqref{eq:exp-proj-eval-S}, with equality if and only if all eigenvalues of $\E\bigl(\xi\xi^\top)$ are equal to $1/n$, i.e.~$\E\bigl(\xi\xi^\top) = I_n/n$. In particular, $C_\varepsilon = 1$ when $\xi \eqd \xi^* \sim \Unif(\mathcal{S}^{n-1})$, i.e.~$\varepsilon$ has a rotationally invariant distribution. Similarly, $p = \E\tr(P) = \tr\E(P) \leq n\norm{\E(P)}_{\mathrm{op}}$, so $C_X \geq 1$ and equality holds if and only if $\E(P) = pI_n/n$, which occurs when $P \eqd P^*$. This is the case if $Q \sim \Unif(\mathcal{O}_{n \times p})$ for some $Q \in \mathcal{Q}$ (whose columns form an orthonormal basis of $\mathrm{Im}(X)$ by definition).

In general, $\norm{\E(P)}_{\mathrm{op}} \leq \E(\norm{P}_{\mathrm{op}}) = 1$. Equality holds if and only if there exists some deterministic $v \in \mathrm{Im}(X)$, since in this case $Pv = v$ almost surely and hence $\E(P)v = v$, or equivalently $\norm{\E(P)}_{\mathrm{op}} = 1$. In particular, this occurs when $X$ has a deterministic column. In this case, we must take $C_X \geq n/p$, which may inflate the value of $A$ and hence the bound~\eqref{eq:doubly-robust}. We obtain a tighter bound in Theorem~\ref{thm:doubly-robust} by first orthogonalising out some of the deterministic covariates (e.g.~an intercept term) in the null model.
\end{remark*}

Finally in this section, we remark that quantifying departures from exactness of the $F$-test in terms of a Wasserstein distance between relevant vectors such as $\xi$ and $\xi^*$ is tighter and more natural than alternatives such as the total variation distance $\mathrm{TV}(\xi,\xi^*)$. Since $\xi$ and $\xi^*$ take values in $\mathcal{S}^{n-1}$ as mentioned above, we always have $\W_1(\xi,\xi^*) \leq 2\mathrm{TV}(\xi,\xi^*)$ \citep[Theorem~4]{gibbs2002choosing}, but this bound can be extremely loose in certain cases. Indeed, the total variation distance is 1 as soon as $\xi$ is discrete (even if it is a very fine quantisation of~$\xi^*$, for instance, so that $\W_1(\xi,\xi^*)$ can be arbitrarily small). Similar remarks show that other $f$-divergences such as Hellinger distance, Kullback--Leibler divergence or chi-squared divergence are also inappropriate, owing to standard inequalities relating these notions to the total variation distance \citep[][Exercise~8.2\emph{(a)}]{samworth24modern}. By contrast, Wasserstein distances have a natural interpretation in terms of mass transportation and are known to have several attractive properties \citep{villani2008optimal}.  We also note that for random vectors in $\mathcal{S}^{n-1}$, the Wasserstein distance $\W_1$ is closely related to the bounded Lipschitz distance $d_{\mathrm{BL}}$; indeed, $d_{\mathrm{BL}}(\xi,\xi^*) \leq \W_1(\xi,\xi^*) \leq 2d_{\mathrm{BL}}(\xi,\xi^*)$.

\subsection{Bounds on the local Kolmogorov distance}
\label{sec:local-dK}

Recall that the Kolmogorov distance controls the Type~I error inflation of the $F$-test via the inequality~\eqref{eq:size-dK}.  On the other hand, since we reject the null hypothesis for large values of $\mathsf{F}$, practitioners are primarily interested in departures from the nominal $F$-distribution in the right tail.  In this sense, the inequality~\eqref{eq:size-dK} may be loose when departures occur elsewhere in the support of the distribution. With a view to strengthening the bound, we now introduce a \emph{local Kolmogorov pseudometric}\footnote{A pseudometric $d$ has the same properties as a metric except that we allow $d(x,y) = 0$ even when $x \neq y$.} between the $F$-statistic $\mathsf{F}$ and the notional $\mathsf{F}^* \sim F_{p - p_0, n - p}$, namely
\[
d_{\mathrm{K},\alpha}(\mathsf{F},\mathsf{F}^*) := \sup_{t \geq q_\alpha^*}|\Pr(\mathsf{F} > t) - \Pr(\mathsf{F}^* > t)| = \sup_{\alpha' \in (0,\alpha]}|\Pr(\mathsf{F} > q_{\alpha'}^*) - \alpha'|.
\]
As before, $q_{\alpha'}^*$ denotes the $(1 - \alpha')$-quantile of the $F_{p-p_0, n-p}$ distribution. Theorem~\ref{thm:dK-alpha} below establishes the H\"older continuity of the local Kolmogorov pseudometric above in terms of the $L^1$-Wasserstein distance between the distributions of $T$ and $T^* \sim \mathrm{Dirichlet}\bigl(\frac{p_0}{2},\frac{p - p_0}{2},\frac{n - p}{2}\bigr)$. This yields a more refined bound
\[
\Pr(\mathsf{F} > q_\alpha^*) \leq \alpha + d_\mathrm{K,\alpha}(\mathsf{F},\mathsf{F}^*)
\]
on the Type I error of the $F$-test.

\begin{theorem}
\label{thm:dK-alpha}
For $\alpha \in (0,1/2)$, we have
\begin{equation}
\label{eq:dK-alpha}
d_{\mathrm{K},\alpha}(\mathsf{F},\mathsf{F}^*) \lesssim_{n,p,p_0}
\begin{cases}
\W_1(nT,nT^*)^{1/3} \;\; &\text{if }n - p = 1 \\
\max\bigl\{\alpha^{\frac{n - p - 2}{2(n - p)}}\W_1(nT,nT^*)^{1/2},\,\W_1(nT,nT^*)^{\frac{n - p}{n - p + 2}}\bigr\} \;\; &\text{if }n - p \geq 2.
\end{cases}
\end{equation}
\end{theorem}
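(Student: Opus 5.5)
Since $\mathsf{F}$ is an increasing function of $U := T_2/(T_2+T_3)$, I would work with $U$ and with $U^* := T_2^*/(T_2^*+T_3^*) \sim \mathrm{Beta}(a,b)$, where $a := \frac{p-p_0}{2}$ and $b := \frac{n-p}{2}$. Let $G$ and $G^*$ be the distribution functions of $U$ and $U^*$, and let $u_\alpha$ be the $(1-\alpha)$-quantile of $U^*$, which is the image of $q_\alpha^*$. Then
\[
d_{\mathrm{K},\alpha}(\mathsf{F},\mathsf{F}^*) = \sup_{u \geq u_\alpha}|G(u) - G^*(u)|.
\]

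\textbf{Step 1: reduce to a Wasserstein bound on the tail.} As in the proof of Theorem~\ref{thm:dK-W1-A}, I will bound the Wasserstein distance between $U$ and $U^*$ by a constant times $\delta := \W_1(nT,nT^*)$, up to the tail restriction. This uses the Lipschitz property of $(t_2,t_3) \mapsto t_2/(t_2+t_3)$ away from $t_2+t_3 = 0$, together with the estimate involving $D_1$ that controls the mass near $T_2+T_3 = 0$. Only the integral of $|G - G^*|$ over the region $u \geq u_\alpha - h$ matters. I would apply a localised version of Lemma~\ref{lem:holder-L1-Linfty}: if $|G(u_0) - G^*(u_0)| = \epsilon$ at some $u_0 \geq u_\alpha$, then monotonicity of $G$ forces $|G - G^*| \geq \epsilon/2$ on an interval of length $h$ adjacent to $u_0$. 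That interval lies either to the left of $u_0$ (possibly extending slightly below $u_\alpha$, which is harmless) or to the right of $u_0$, inside $[u_0,1]$. Its length $h$ is determined by $G^*(u_0 \pm h) - G^*(u_0) = \epsilon/2$, so we get $\epsilon h \lesssim \delta$.

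\textbf{Step 2: use the local behaviour of $G^*$ near $1$.} This is the key point. On $[u_\alpha,1]$, the Beta density satisfies $g_{a,b}(u) \asymp (1-u)^{b-1}$, and $1 - u_\alpha \asymp \alpha^{1/b}$. The modulus of continuity of $G^*$ on this region therefore has two forms. For $h \lesssim 1-u_0$, $G^*$ is Lipschitz with constant $\asymp (1-u_0)^{b-1} \lesssim \alpha^{(b-1)/b}$ when $b \geq 1$. For $h$ comparable to $1-u_0$, the increment is $\asymp h^{b}$.

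\textbf{Step 3: case split.}
\begin{itemize}
\item If $n-p = 1$, then $b = 1/2$, $G^*$ is $\frac12$-H\"older near $1$, and the global argument gives exponent $1/3$ exactly as before.
\item If $n-p \geq 2$, the Lipschitz regime gives $\epsilon \asymp L h$ with $L \lesssim \alpha^{(b-1)/b}$. Combined with $\epsilon h \lesssim \delta$, this yields $\epsilon \lesssim (L\delta)^{1/2} = \alpha^{\frac{n-p-2}{2(n-p)}}\delta^{1/2}$.
\item If instead the required $h$ exceeds $1-u_0$ (the interval reaches the endpoint $1$), then $\epsilon \lesssim h^{b}$. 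Combined with $\epsilon h \lesssim \delta$, this gives $\epsilon^{1+1/b} \lesssim \delta$, that is, $\epsilon \lesssim \delta^{b/(b+1)} = \delta^{\frac{n-p}{n-p+2}}$.
\end{itemize}
Taking the maximum of the last two bounds gives~\eqref{eq:dK-alpha}.

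\textbf{Main obstacle.} The hardest step is the localisation in Step 1. The transfer from $\W_1(nT,nT^*)$ to the one-dimensional $L^1$ distance between $G$ and $G^*$ has to respect the restriction to the right tail. One also has to handle the case where the witnessing interval extends left of $u_\alpha$, and the case where $a < 1$ makes $G^*$ non-Lipschitz near $0$; the latter is irrelevant here since $\alpha < 1/2$ keeps $u_\alpha$ away from $0$. I would first try to prove directly the coupling inequality $\int_{u_\alpha-h}^1 |G-G^*| \lesssim \delta$, reusing the coupling from Theorem~\ref{thm:dK-W1-A}. All constants depending on $(n,p,p_0)$ are absorbed into $\lesssim_{n,p,p_0}$.
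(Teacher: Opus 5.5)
Your one-dimensional analysis of the Beta tail in Steps~2--3 matches what the paper uses: the local modulus $|G(v+h)-G(v)|\lesssim v^{(n-p)/2-1}|h|+|h|^{(n-p)/2}$ for the law of $1-U^*$, together with $1-u_\alpha\lesssim\alpha^{2/(n-p)}$. However, Step~1 has a genuine gap whenever $p_0\ge1$. The map $T\mapsto U=T_2/(1-T_1)$ is not Lipschitz. All one has is $(U-U^*)(1-T_1^*)=\Delta_2+U\Delta_1$, i.e.\ $|U-U^*|\le2^{1/2}\norm{\Delta}_2/(1-T_1^*)$. An arbitrary coupling may put all of $\norm{\Delta}_2$ on the event where $1-T_1^*$ is small, so a moment such as $D_1=\E\{(1-T_1^*)^{-1/2}\}$ cannot be factored out.

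In fact the inequality you plan to prove, $\int_{u_\alpha-h}^1|G-G^*|\lesssim\W_1(nT,nT^*)$, is false. Let $W:=1-T_1^*\sim\mathrm{Beta}(\frac{n-p_0}{2},\frac{p_0}{2})$, which is independent of $U^*$. For small $t>0$, put $T:=T^*$ on $\{W\ge t\}$ and $T:=(T_1^*,W,0)$ on $\{W<t\}$. This is realisable with $X=(e_1\,\cdots\,e_p)$ by rescaling blocks of $\varepsilon^*\sim\Unif(\mathcal{S}^{n-1})$, as in the proof of Proposition~\ref{prop:dK-W1-lb}\textit{(a)}. Then $\norm{T-T^*}_2=2^{1/2}T_3^*<2^{1/2}t$ on $\{W<t\}$, so $\W_1(nT,nT^*)\le2^{1/2}nt\,\Pr(W<t)\asymp t^{(n-p_0)/2+1}$. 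But $U=1$ on $\{W<t\}$, so $G^*(u)-G(u)=G^*(u)\Pr(W<t)>\Pr(W<t)/2$ for every $u\in[u_\alpha,1)$. Hence $\int_{u_\alpha}^1|G-G^*|\gtrsim(1-u_\alpha)\,t^{(n-p_0)/2}$. So ``$\epsilon h\lesssim\delta$'' cannot hold. In general an unconditional transfer only gives $\W_1(U,U^*)\lesssim\delta^{(n-p_0)/(n-p_0+2)}$, which would spoil the $\alpha^{\frac{n-p-2}{2(n-p)}}\delta^{1/2}$ term. (The theorem itself is not contradicted; only this route is lossy.) Your argument does go through when $p_0=0$, since then $T_1\equiv T_1^*\equiv0$ and $\W_1(U,U^*)\le\W_1(T,T^*)$.

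The missing idea is to attach the singular factor $1/(1-T_1^*)$ to a \emph{deterministic} increment, and to exploit the independence of $T_1^*$ and $U^*$ under the Dirichlet law. The paper does this at the level of distribution functions. For fixed $\delta>0$, the event $\{U\le s\}=\{U^*\le s-(s\Delta_1+\Delta_2)/(1-T_1^*)\}$ is sandwiched between $\{U^*\le s\pm\delta/(1-T_1^*)\}$, up to the event $\{2^{1/2}\norm{\Delta}_2\ge\delta\}$. One then conditions on $T_1^*=w$, applies the local modulus with $h=\delta/(1-w)$, and integrates against the density of $T_1^*$; the integrals are finite because $n-p_0\ge3$ and $p>p_0$. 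This gives $(\alpha^{1-2/(n-p)}\delta)\vee\delta^{(n-p)/2}$, and Markov's inequality plus optimisation over $\delta$ finishes the proof. Here $\norm{\Delta}_2$ enters only through $\Pr(2^{1/2}\norm{\Delta}_2\ge\delta)$, without any singular weight, and no $L^1$ bound on $G-G^*$ is needed.

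If you want to keep your H\"older--$L^1$ mechanism, run it conditionally on $T_1^*=w$ instead. Conditionally, $U^*$ is still $\mathrm{Beta}(\frac{p-p_0}{2},\frac{n-p}{2})$, and the conditional Wasserstein distance is at most $2^{1/2}\E(\norm{\Delta}_2\,|\,T_1^*=w)/(1-w)$. Your Step~3 then gives a conditional bound, which you average by Cauchy--Schwarz and H\"older using $\E\{(1-T_1^*)^{-1}\}<\infty$ and $\E\{(1-T_1^*)^{-(n-p)/2}\}<\infty$. This also removes the need for any tail localisation.
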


Here, the notation $a \lesssim_{n,p,p_0} b$ or $b \gtrsim_{n,p,p_0} a$ means that $a \leq Cb$ for some $C > 0$ depending only on $n,p,p_0$.  We defer discussion of the conclusion of Theorem~\ref{thm:dK-alpha} until after the following refinement of Proposition~\ref{prop:dK-W1-lb}\textit{(b)}, which shows that~\eqref{eq:dK-alpha} can be tight.

\begin{proposition}
\label{prop:dK-alpha-lb}
Fix $\alpha \in (0,1/2)$.  For any $\eta > 0$, there exist an $n \times p$ design matrix $X$ and an $n$-dimensional error vector $\varepsilon$ such that $X$ and $\varepsilon$ are independent, $(X_1,\varepsilon_1),\dotsc,(X_n,\varepsilon_n)$ are exchangeable, $\W_1(nT,nT^*) < \eta$ and
\begin{equation}
\label{eq:dK-alpha-lb}
d_{\mathrm{K},\alpha}(\mathsf{F},\mathsf{F}^*) \gtrsim_{n,p,p_0}
\begin{cases}
\W_1(nT,nT^*)^{1/3} \;\; &\text{if }n - p = 1 \\
\max\bigl\{\alpha^{\frac{n - p - 2}{2(n - p)}}\W_1(nT,nT^*)^{1/2},\,\W_1(nT,nT^*)^{\frac{n - p}{n - p + 2}}\bigr\} \;\; &\text{if }n - p \geq 2.
\end{cases}
\end{equation}
Moreover, we can ensure that $\Pr(\mathsf{F} > q) \geq \Pr(\mathsf{F}^* > q)$ for all $q > 1$.
\end{proposition}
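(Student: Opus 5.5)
The plan is to build, for each regime, an explicit perturbation $T$ of $T^*$, and then to realise it through an independent, exchangeable pair $(X,\varepsilon)$. The realisation step follows the geometric construction used for Proposition~\ref{prop:dK-W1-lb}\emph{(b)}. First I would reduce to perturbations of the scalar $U := T_2/(T_2+T_3)$, since $\mathsf{F} = \frac{n-p}{p-p_0}\cdot\frac{U}{1-U}$ is increasing in $U$. The local Kolmogorov distance then becomes $\sup_{u \geq u_\alpha^*}|\Pr(U > u) - \Pr(U^* > u)|$, where $U^* \sim \mathrm{Beta}(\frac{p-p_0}{2},\frac{n-p}{2})$ and $u_\alpha^*$ is the upper $\alpha$-quantile. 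It also suffices to bound $\W_1(nT,nT^*)$ above by a constant times the cost of transporting $U^*$ to $U$. For this, take $\varepsilon$ with $\xi = \varepsilon/\norm{\varepsilon}_2$ uniform on the sphere, so that $T^*$ is available as a coupling partner. Then randomly re-scale the split between $(P-P_0)\xi$ and $(I_n-P)\xi$, keeping $T_1$ and $T_2+T_3$ fixed, and realise this through the distribution of the column space of $X$, conditionally on being independent of $\varepsilon$. Exchangeability is obtained at the end by applying an independent uniformly random permutation of the rows to both $X$ and $\varepsilon$. This leaves $T$ unchanged, since the projections are permutation-equivariant, and preserves independence when the two constructions use independent permutations composed with an exchangeable $\varepsilon$.

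The core construction moves a small amount of mass $m$ of $U^*$ rightwards, to increase $\Pr(U > u)$ for $u$ just above some point $u_0 \geq u_\alpha^*$. This automatically gives $\Pr(\mathsf{F}>q) \geq \Pr(\mathsf{F}^*>q)$ for $q>1$, provided mass only moves right from a region above the value corresponding to $q=1$. Concretely, take the $U^*$-mass on $[u_0-\delta,u_0]$ and push it to $u_0+$, with $\delta \leq 1-u_0$. The transport cost is then of order $\delta m$, where $m = \int_{u_0-\delta}^{u_0} g(u)\,du$, and the gain in $d_{\mathrm{K},\alpha}$ is $m$. Near $u=1$ we have $g(u) \asymp (1-u)^{(n-p)/2-1}$ and $1-u_\alpha^* \asymp \alpha^{2/(n-p)}$. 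This gives two choices.
\begin{itemize}
\item Choosing $u_0 = 1-\delta$, i.e.\ moving the extreme tail, gives $m \asymp \delta^{(n-p)/2}$ and cost $\W \asymp \delta^{(n-p)/2+1}$. Hence $m \asymp \W^{(n-p)/(n-p+2)}$. This yields the second term for $n-p\ge2$, and the exponent $1/3$ when $n-p=1$.
\item Choosing $u_0 = u_\alpha^*$ with $\delta$ small relative to $1-u_\alpha^*$ gives $m \asymp g(u_\alpha^*)\delta$, with $g(u_\alpha^*) \asymp \alpha^{(n-p-2)/(n-p)}$. Then $\W \asymp g\delta^2$, so $m \asymp g^{1/2}\W^{1/2} = \alpha^{\frac{n-p-2}{2(n-p)}}\W^{1/2}$.
\end{itemize}
Whichever choice is larger for the given $\W$ gives the maximum in~\eqref{eq:dK-alpha-lb}. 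Taking $\delta \to 0$ makes $\W_1 < \eta$.

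The main obstacle is realising an arbitrary prescribed (small) perturbation of the law of $U$ through independent, exchangeable $(X,\varepsilon)$, while controlling $\W_1(nT,nT^*)$. I would first try the following. Fix $\varepsilon$ spherically symmetric and let $X$ depend on an independent direction $\zeta$, equal to a rotated copy of $\xi$. This is not allowed as stated, so instead I would make $X$ rotationally non-invariant in a controlled way: take $X = Q^*$ except on a small-probability event, where the subspace $\mathrm{Im}(X)\ominus\mathrm{Im}(X_0)$ is replaced by a subspace tilted toward a fixed direction, and let $\varepsilon$ concentrate near that direction with matching small probability. Independence then makes the joint law a mixture. Only the mixture component in which both events occur contributes the perturbation, and this has probability $m$. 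One tunes the tilt angle so that $U$ moves from the interval $[u_0-\delta,u_0]$ past $u_0$. The delicate points are, first, that off the joint event $T \eqd T^*$ must still hold, which requires spherical symmetry to be restored on those components (for example by symmetrising $\varepsilon$ by a random rotation that fixes the tilt direction); and second, bounding the coupling cost by $O(\delta m)$ uniformly. This mirrors the argument for Proposition~\ref{prop:dK-W1-lb}\emph{(b)}, and I would follow that proof closely.
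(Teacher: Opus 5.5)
\textbf{Verdict: genuine gap.} Your scalar analysis is correct and is essentially the paper's computation. You reduce to $U=T_2/(T_2+T_3)$, move the $U^*$-mass $m$ of a length-$\delta$ interval just below some $u_0\ge u_\alpha^*$ to just above it, and use $1-u_\alpha^*\asymp\alpha^{2/(n-p)}$ and $g(u)\asymp(1-u)^{(n-p)/2-1}$ near $1$. This gives a gain of $m$ in $d_{\mathrm{K},\alpha}$ at transport cost $O(\delta m)$, hence both terms in~\eqref{eq:dK-alpha-lb}. The gap is the realisation step, which you leave open; both mechanisms you sketch fail as described.

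\emph{First mechanism.} In your first paragraph $\xi$ is uniform on $\mathcal{S}^{n-1}$ and independent of $X$. Then Proposition~\ref{prop:spherical-sym-A}\textit{(b)} gives $\mathsf{F}\sim F_{p-p_0,n-p}$ (indeed $T\eqd T^*$) whatever the law of $X$. So no choice of column-space distribution can move $U$ at all.

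\emph{Second mechanism.} In your last paragraph you require $T\eqd T^*$ off the joint event, and that event has small probability $\pi$ (you take $\pi=m$). Then $\mathcal{L}(T)-\mathcal{L}(T^*)=\pi\{\mathcal{L}(T_{\mathrm{joint}})-\mathcal{L}(T^*)\}$. So $d_{\mathrm{K},\alpha}$ and $\W_1(nT,nT^*)$ are both exactly $\pi$ times the corresponding quantities for $T_{\mathrm{joint}}$ (the latter by Kantorovich--Rubinstein duality). Apply Theorem~\ref{thm:dK-alpha} to $T_{\mathrm{joint}}$ and write $\W_1:=\W_1(nT,nT^*)$. The gain is then at most of order $\max\{\pi^{1/2}\alpha^{\frac{n-p-2}{2(n-p)}}\W_1^{1/2},\,\pi^{\frac{2}{n-p+2}}\W_1^{\frac{n-p}{n-p+2}}\}$, or $\pi^{2/3}\W_1^{1/3}$ if $n-p=1$. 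With $\pi=m\to0$ you therefore cannot reach~\eqref{eq:dK-alpha-lb} with constants depending only on $n,p,p_0$. Concretely, if $U$ is concentrated on the joint component, then $\W_1(T_{\mathrm{joint}},T^*)\asymp1$, and the cost is of order $m$ against a gain of at most $m$. What you need is a component of probability bounded below whose law is the \emph{whole} Dirichlet law, transported by a map that moves only the mass with $U^*\in[u_0-\delta,u_0]$, and by $O(\delta)$.

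\emph{How the paper does it.} The paper does not build a new realisation, and does not use your tilting mechanism. It reuses verbatim the construction from the proof of Proposition~\ref{prop:dK-W1-lb}\textit{(b)}: randomly signed, randomly permuted coordinate columns for $X$, and $\varepsilon_i=\xi_i\sqrt{Y_i}$ with $Y=\mathcal{T}(Y^*)$, $Y^*\sim\mathrm{Dirichlet}(1/2,\dotsc,1/2)$, where $\mathcal{T}$ is the permutation-equivariant projection of Lemma~\ref{lem:simplex-proj}. This already yields~\eqref{eq:exch-lb-dK} and~\eqref{eq:exch-lb-W1}: a gain of at least $\eta/4$ and $\W_1(nT,nT^*)\lesssim Kh\eta$, where $\eta=\Pr(|V^*-c|<h)$. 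It then only chooses $c=1-v'-\delta/2$ and $h=\delta/2$, with $v'=\frac{v_\alpha}{2}\wedge\frac{n-p}{4(n-p_0)}$ and $\delta\in(0,v')$. This gives $c-h\ge s_\alpha$ and $c-h-\frac{p-p_0}{n-p_0}>\frac{n-p}{4(n-p_0)}$, so $K\lesssim1$, and $\eta\gtrsim(\alpha^{1-2/(n-p)}\delta)\vee\delta^{(n-p)/2}$. The case $n-p=1$ is the earlier proof with $\delta<v_\alpha$.

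\emph{Repairing your idea.} Your tilting idea can also be made into a simpler argument. Take $\varepsilon:=\zeta n^{-1/2}1_n$ with $\zeta$ Rademacher, let $Q^*\sim\Unif(\mathcal{O}_{n\times p})$ be independent of $\zeta$, and set $X:=\Phi(Q^*)$. Let $U^*:=\norm{(P^*-P_0^*)1_n}_2^2/\norm{(I_n-P_0^*)1_n}_2^2$. The map $\Phi$ keeps $Q^*_{[p_0]}$. Exactly when $U^*\in[u_0-\delta,u_0]$, it rotates $\mathrm{Im}(Q^*_{(p_0:p]})$ within $\mathrm{Im}(Q^*_{[p_0]})^\perp$ toward $(I_n-P_0^*)1_n$ until $U=U^*+\delta$. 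Then:
\begin{enumerate}
\item The vector $T$ computed from $(Q^*,\varepsilon)$ is Dirichlet by (the proof of) Proposition~\ref{prop:spherical-sym}\textit{(b)}, so it serves as $T^*$.
\item $T_1=T_1^*$ and $\norm{T-T^*}_2\le2^{1/2}|U-U^*|$, so the cost is $O(\delta m)$.
\item $\Phi(RQ)=R\Phi(Q)$ for every orthogonal $R$ with $R1_n=1_n$, in particular every permutation matrix. Hence $(X_1,\varepsilon_1),\dotsc,(X_n,\varepsilon_n)$ are exchangeable without any symmetrisation.
\end{enumerate}
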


When $p_0 = 0$, $p = 1$ and $n \geq 3$, Example~\ref{ex:exchangeable-lb-1} in the appendix explicitly constructs a joint distribution of $(X,\varepsilon)$ for which the $F$-statistic attains the lower bound in Proposition~\ref{prop:dK-alpha-lb} and yields an anti-conservative test.  When $n - p = 1$, the bounds in~\eqref{eq:dK-alpha} and~\eqref{eq:dK-alpha-lb} of order $\W_1(nT,nT^*)^{1/3}$ coincide with those in the $k = 1$ case of Theorem~\ref{thm:dK-W1-A} and Proposition~\ref{prop:dK-W1-lb} (up to a multiplicative factor depending on $n,p,p_0$ but not $\alpha$). The main difference lies in the case $n - p \geq 2$, where
\begin{align}
&\max\bigl\{\alpha^{\frac{n - p - 2}{2(n - p)}}\W_1(nT,nT^*)^{1/2},\,\W_1(nT,nT^*)^{\frac{n - p}{n - p + 2}}\bigr\} \notag \\
\label{eq:dK-alpha-bd-2}
&\hspace{2cm}= 
\begin{cases}
\W_1(nT,nT^*)^{\frac{n - p}{n - p + 2}} \;\;&\text{if }\alpha \leq \W_1(nT,nT^*)^{\frac{n - p}{n - p + 2}} \\
\alpha^{\frac{n - p - 2}{2(n - p)}}\W_1(nT,nT^*)^{1/2} \;\;&\text{if }\alpha > \W_1(nT,nT^*)^{\frac{n - p}{n - p + 2}},
\end{cases}
\end{align}
in contrast to $\W_1(nT,nT^*)^{1/3}$ in our earlier results when $p - p_0 = 1 < n - p$, and $\W_1(nT,nT^*)^{1/2}$ when $\min(p - p_0, n - p) \geq 2$. In the first case in~\eqref{eq:dK-alpha-bd-2}, the H\"older exponent $\frac{n - p}{n - p + 2}$ is always at least $1/2$ when $n - p \geq 2$, and approaches 1 as $n - p$ increases. Thus, when $n-p \geq 2$, the bound on $d_{\mathrm{K},\alpha}(\mathsf{F},\mathsf{F}^*)$ in~\eqref{eq:dK-alpha} is indeed tighter than the earlier bounds on the global Kolmogorov distance $d_{\mathrm{K}}(\mathsf{F},\mathsf{F}^*)$ (up to a multiplicative factor depending on $n,p,p_0$), and scales with $\alpha$ in the regime $\alpha > \W_1(nT,nT^*)^{\frac{n - p}{n - p + 2}}$ when $n - p \geq 3$.

The two terms in Theorem~\ref{thm:dK-alpha} and Proposition~\ref{prop:dK-alpha-lb} reflect different perturbation regimes. When~$\alpha$ is sufficiently large relative to the Wasserstein discrepancy, the corresponding critical value for the $F$-test (once transformed onto the appropriate beta distribution scale) lies away from the right-hand endpoint of the beta distribution.  A local perturbation then yields a H\"older exponent of $1/2$, adjusted by the density at the $(1-\alpha)$-level quantile. For very small~$\alpha$, however, the critical value approaches this beta distribution boundary, where a horizontal perturbation of $h$ in the upper tail of the beta distribution function results in a vertical perturbation of order $h^{(n-p)/2}$.  Transporting this mass incurs a Wasserstein cost of order $h^{(n-p+2)/2}$, and therefore leads to a H\"older exponent of $\frac{n - p}{n - p + 2}$.

\section{Related work and discussion}
\label{sec:relatedWork}

There has been a long line of work since the 1920s studying the performance of the $F$-test under non-normal errors. Relevant early papers include those by \citet{pearson1929notes}, \citet{pearson1931analysis}, \citet{bartlett1935effect}, \citet{geary1947testing}, \citet{david1951effect}. On the other hand, more refined theoretical and empirical analyses in subsequent decades revealed a more nuanced picture, which is encapsulated by the following quote from \citet{ali1996robustness} in their exploration of the robustness of regression $F$-tests:
\begin{quote}
Besides the sample size and the degrees of freedom of error sum of squares, the major determinant of the sensitivity [of the $F$-statistic] to nonnormality [of the errors] is the extent of the ‘nonnormality’ of the regressors or the extent of presence of ‘leveraged’ (influential) observations.
\end{quote}
We now highlight some key strands of the more recent literature that address the different aspects mentioned above. First, works including those by \citet{efron1969student}, \citet{benjamini1983t} and \citet{pinelis1994extremal} specifically studied the one-sample $t$-statistic and its multivariate extension, Hotelling's $T^2$ \citep{hotelling31generalization}. The $t$-test was seen to be conservative when the errors are orthant-symmetric, i.e.~$(\varepsilon_1,\dotsc,\varepsilon_n) \eqd (\eta_1\varepsilon_1,\dotsc,\eta_n\varepsilon_n)$ for any fixed $\eta_1,\dotsc,\eta_n \in \{-1,1\}$. In this scenario, the most refined finite-sample bounds obtained by \citet{pinelis1994extremal} showed that all tail probabilities of the $t$-statistic are bounded above by a universal constant multiple of their chi-squared counterparts. The robustness of the $t$-test more generally can be attributed in part to the regularising effect of self-normalisation \citep[e.g.][]{logan1973limit,bentkus1996berry,shao1997self,gine1997student,david2005asymptotic}.

As for the general linear regression model, early works focused more on fixed design settings.  For instance, \citet{zellner1976bayesian} studied the case where the distribution of the error vector is a scale mixture of isotropic Gaussians (e.g.~a centred multivariate $t$-distribution with isotropic scale matrix), while \citet{kariya1977robust} considered the setting of spherically symmetric errors with a finite second moment.  \citet{jensen1979linear} established the general exactness of the $F$-test under spherical symmetry of the error distribution (without additional moment conditions).

Under the key assumption that the maximum \textit{leverage} (namely $\max_{i \in [n]} X_i^\top(X^\top X)^{-1}X_i = \max_{i \in [n]} P_{ii}$) tends to 0 in a triangular array setting, \citet{huber1973robust} established the asymptotic normality of all one-dimensional contrasts of the OLS estimator when $p/n = \tr(P)/n \to 0$. It follows that the $F$-test has a limiting chi-squared distribution in this regime \citep{arnold1980asymptotic}. The leverage condition says roughly that no covariate $X_i$ can have a disproportionate asymptotic influence on the fitted estimator, which is essentially why it fulfils the hypotheses of the Lindeberg--Feller central limit theorem \citep[Example~2.28]{vdV1998asymptotic}.

The effect of the design matrix on the sensitivity of the $F$-test to non-Gaussian errors was previously recognised by \citet{box1962robustness}. They quantified this phenomenon using a `correction factor' obtained by approximating the distribution of the test statistic under random permutations of the error vector. The two illustrative fixed-design examples in their Section~3 are particularly pertinent to this discussion. First, consider the $p$-group one-way ANOVA model with group sizes $n_1,\dotsc,n_p$ summing to $n$, for which
\[
X^\top X = \mathrm{diag}(n_1,\dotsc,n_p) \quad\text{and}\quad
P = \mathrm{diag}\Bigl(\frac{I_{n_1}}{n_1},\dotsc,\frac{I_{n_p}}{n_p}\Bigr)
\]
and hence the maximum leverage is $1/\min_{j \in [p]} n_j$. Many subsequent works studied the contrast between balanced ANOVA designs where $\min_{j \in [p]} n_j \asymp n/p$, as opposed to unbalanced groups where those with small $n_j$ have excessive influence. (The numerical calculations of \citet{box1955permutation}, later reproduced by \citet{scheffe1959analysis}, already indicated substantial robustness of balanced ANOVA $F$-tests to moderate skewness and kurtosis.) In particular, the one-dimensional $t$-test corresponds to the special case of a single group of size $n_1 = n$, where the aforementioned robustness can be viewed through the lens of the maximum leverage taking its smallest possible value $1/n$.

In the second example given by \citet{box1962robustness},
\[
X = 
\begin{pmatrix}
I_p & 0 \\
0 & 0
\end{pmatrix}
\in \R^{n \times p}
\quad\text{and}\quad
P = 
\begin{pmatrix}
I_p & 0 \\
0 & 0
\end{pmatrix}
\in \R^{n \times n},
\]
so the maximum leverage attains its largest possible value of 1. Here, the $F$-statistic under the global null ($p_0 = 0$) is
\[
\mathsf{F} = \frac{\frac{1}{p}\norm{PY}_2^2}{\frac{1}{n-p}\norm{(I_n - P)Y}_2^2} = \frac{n - p}{p} \cdot \frac{\sum_{i=1}^p Y_i^2}{\sum_{i=p+1}^n Y_i^2}.
\]
In a different context, this is used as a two-sample test for equality of variances. The considerations above support the assertion of  \citet{box1962robustness} that `by different choices of the~$x$ vectors, almost the same regression model
can be made to reproduce on the one hand a test to compare means which is little affected by non-normality and on the other a comparison of variances test which is notoriously sensitive to non-normality'. \citet{ali1996robustness} developed this theory further by deriving an expansion of the Type~I error deviation of the $F$-test from its nominal level, in terms of quantities depending on both the covariates and errors. 

The relevance of this discussion to our theory is that the Wasserstein bound~\eqref{eq:doubly-robust} in Theorem~\ref{thm:doubly-robust-A} (controlling $\dK(\mathsf{F},\mathsf{F}^*)$ through Theorem~\ref{thm:dK-W1-A}) depends monotonically on the quantity $C_X$ appearing in the condition $\norm{\E(P)}_{\mathrm{op}} \leq C_X p/n$. Here, $\norm{\E(P)}_{\mathrm{op}}$ is our random design analogue of the maximum leverage, which is $p/n$ in the most favourable rotationally invariant (`balanced') setting where we can take $C_X = 1$. On the other hand, $\norm{\E(P)}_{\mathrm{op}}$ can attain its largest possible value $1$ (so we require $C_X \geq n/p$) in unbalanced designs such as the second example above.

On a related note, \citet{wen2025residual} not only establish Proposition~\ref{prop:spherical-sym-A}\textit{(b)} for the regression $t$-test specifically ($p - p_0 = 1$) in their Section~3, but also present simulations demonstrating the invalidity of this test when either the covariates or errors (which are independent) have some heavy-tailed entries. One plausible explanation is that heavy-tailed covariates generate highly variable leverage scores, allowing a few observations to exert disproportionate influence on both the numerator and denominator of the test statistic, especially when $p$ is comparable to $n$.

The performance of the $F$-test in high-dimensional regimes has been studied more intensively in more recent years, beginning with papers by \citet{boos1995anova}, \citet{akritas2000asymptotics}, \citet{akritas2004heteroscedastic} and \citet{orme2006asymptotic} on ANOVA. Here, we discuss some illuminating results of \citet{calhoun2011hypothesis} and \citet{anatolyev2012inference}. In Section~3 of the former, the classical $F$-test (calibrated with $F$-quantiles) achieves relatively good empirical Type~I error control in some non-Gaussian examples (e.g.~with Cauchy covariates and $t$-distributed errors) when $p - p _0 = 1$ and $p/n \in \{0.1, 0.5\}$. However, its performance deteriorates when $p - p_0$ is larger (e.g.~equal to $p - 1$). \citet{anatolyev2012inference} studied the asymptotic regime where $p/n$ converges to a non-zero constant and the errors have finite fourth moments. Under leverage conditions on $P$ and $P_0$, he proved that $q\mathsf{F} \cvd \chi_q^2$ when $q := p - p_0$ is fixed, while on the other hand $\sqrt{q}(\mathsf{F} - 1) \to N\bigl(0,\lim_{n \rightarrow \infty} \frac{2(n - p_0)}{n - p}\bigr)$ when $q$ also grows proportionally to $n$. When calibrated with $\chi_q^2$ quantiles, the asymptotic size of the $F$-test is inflated in the second regime, but interestingly the size of the classical $F$-test based on $F_{p-p_0, n-p}$ quantiles converges to the nominal level in both regimes. This perhaps explains the relative robustness of the $F$-test in some high-dimensional settings where the leverage condition is satisfied. However, when this crucial property is violated (e.g.~in ANOVA with unbalanced groups), the classical $F$-test is asymptotically invalid and requires correction \citep{calhoun2011hypothesis}. Additional weighting is also needed when the errors are heteroscedastic \citep[e.g.][]{akritas2004heteroscedastic}.

In summary, previous works show that the $F$-test can be either robust or highly non-robust depending on many factors including characteristics of the covariate matrix $X$, nature of the non-Gaussian error distributions, and sample size relative to the number of regressors. In particular, it is the interaction between design geometry and the error distribution that governs the behaviour of the $F$-statistic under the null.

In this paper, we introduce a novel and complementary approach to studying the robustness of the regression $F$-test, focusing on finite-sample sensitivity analysis via H\"older continuity bounds. Our main Theorems~\ref{thm:dK-W1-A} and~\ref{thm:doubly-robust-A} apply regardless of the relative magnitudes of $n,p,p_0$; together, they explicitly bound the potential Type~I error inflation in terms of the minimum of two interpretable quantities, which measure deviations from rotational invariance of the covariate and error distributions respectively. This framework accommodates a flexible range of data-generating mechanisms, with only mild assumptions imposed on the structure of the design matrix and error vector. Nevertheless, our bounds are not especially pessimistic, in the sense that the upper and lower bounds in Theorem~\ref{thm:dK-W1-A} and Proposition~\ref{prop:dK-W1-lb}, as well as those on the local Kolmogorov distance in Section~\ref{sec:local-dK}, continue to match up to multiplicative factors depending only on $n$, $p$ and $p_0$ even when the pairs $(X_1,Y_1),\ldots,(X_n,Y_n)$ are exchangeable.

\bibliographystyle{apalike}
\bibliography{bib}

\clearpage

\setcounter{section}{0}
\setcounter{equation}{0}
\setcounter{theorem}{0}
\def\theequation{S\arabic{equation}}
\def\thesection{S\arabic{section}}
\def\thetheorem{S\arabic{theorem}}
\def\thefigure{S\arabic{figure}}

\section{General results and proofs}

The results in the main text are special cases of their counterparts below, whose labels are prefixed with the letter `S'. We study a more general setting where analogously to the Frisch--Waugh--Lovell theorem (see \citet{frisch1933partial}, \citet{lovell1963seasonal}, \citet{Lovell01012008} or \citet[][Exercise~2.11]{samworth24modern}), it is convenient to orthogonalise covariates with respect to certain other covariates in the null model that play no role in the distributional properties of the $F$-statistic. For instance, we may have deterministic covariates such as an intercept term, and failure to orthogonalise renders some of our earlier results (e.g.~Theorem~\ref{thm:doubly-robust-A}) suboptimal. To this end, given $p' \in \{0,1,\dotsc,p_0\}$, we can let $P' := X_{[p']}(X_{[p']}^\top X_{[p']})^{-1}X_{[p']}^\top$ and $\varepsilon' := (I_n - P')\varepsilon$. In particular, in the case where~$p' = 1$ and $X_1$ represents an intercept term, premultiplication by $I_n - P'$ has the effect of column centring. Henceforth, suppose that $\Pr(\varepsilon' = 0) = 0$, and write $\tilde{n} := n - p'$, $\tilde{p} := p - p'$ and $\tilde{p}_0 := p_0 - p'$.

Since $(I_n - P)P' = (P - P_0)P' = 0$, it follows that under $H_0$, we have
\[
\mathsf{F} = \frac{\frac{1}{p - p_0}\norm{(P - P_0)\varepsilon}_2^2}{\frac{1}{n - p}\norm{(I_n - P)\varepsilon}_2^2} = \frac{\frac{1}{p - p_0}\norm{(P - P_0)\varepsilon'}_2^2}{\frac{1}{n - p}\norm{(I_n - P)\varepsilon'}_2^2}= \frac{T_2/(p - p_0)}{T_3/(n - p)},
\]
where 
\[
T = (T_1,T_2,T_3) := \frac{\bigl(\norm{(P_0 - P')\varepsilon'}_2^2,\; \norm{(P - P_0)\varepsilon'}_2^2,\;\norm{(I_n - P)\varepsilon'}_2^2\bigr)}{\norm{\varepsilon'}_2^2}.
\]

In our first result, we require a notion of rotational invariance that applies to matrices $A$ whose column span is $\mathcal{V} := \mathrm{Im}(I_n - P')$. For $Q_0 \in \mathcal{O}_{n \times n}$, the matrix $Q_0(I_n - P')Q_0^\top$ represents the orthogonal projection onto $Q_0(\mathcal{V})  := \{Q_0 v : v \in \mathcal{V}\}$, so
\begin{equation}
\label{eq:ortho-subspace}
\mathcal{O}' \equiv \mathcal{O}_{P'}' := \{Q_0 \in \mathcal{O}_{n \times n} : Q_0 P'Q_0^\top = P'\} = \{Q_0 \in \mathcal{O}_{n \times n} : Q_0(\mathcal{V}) = \mathcal{V}\}.
\end{equation}
We say that a random $n \times d$ matrix $A$ with $d \in [\tilde{p}]$ has a \emph{rotationally invariant distribution with respect to $\mathcal{V}$} if, given $P'$, we have $Q_0 A \eqd A$ for every $Q_0 \in \mathcal{O}_{P'}'$. There is a unique distribution on $\mathcal{V}_{\tilde{p}} := \{A \in \mathcal{O}_{n \times \tilde{p}} : \mathrm{Im}(A) \subseteq \mathcal{V}\}$ that is rotationally invariant with respect to $\mathcal{V}$ \citep[e.g.][Theorem~6.3]{eaton2007multivariate}.

\begin{proposition}
\label{prop:spherical-sym}
\begin{enumerate}[label=(\alph*)]
\item Suppose that
\[
T \sim \mathrm{Dirichlet}\Bigl(\frac{\tilde{p}_0}{2},\frac{p - p_0}{2},\frac{n - p}{2}\Bigr).
\]
Then
\[
\frac{\norm{(P - P_0)\varepsilon}_2^2}{\norm{(I_n - P_0)\varepsilon}_2^2} = \frac{T_2}{T_2 + T_3} \sim \mathrm{Beta}\Bigl(\frac{p - p_0}{2},\frac{n - p}{2}\Bigr),
\]
which is equivalent to  $\mathsf{F} \sim F_{p - p_0, n - p}$.
\item Suppose that, conditionally on $P'$, the matrix $X' := (I_n - P')X_{(p':p]}$ and vector~$\varepsilon' = (I_n - P')\varepsilon$ are independent, and that at least one of them has a rotationally invariant distribution with respect to $\mathcal{V}$. Then $\mathsf{F} \sim F_{p - p_0, n - p}$ under~$H_0$. In particular, this latter condition holds if conditionally on $P'$, the rows of $X_{(p':p]}$ are independent $N_{\tilde{p}}(0,\Sigma_X)$ random vectors for some positive definite $\Sigma_X \in \R^{\tilde{p} \times \tilde{p}}$.
\end{enumerate}
\end{proposition}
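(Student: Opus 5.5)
Part \textit{(a)} is a standard Dirichlet aggregation fact. I would realise $T^* \sim \mathrm{Dirichlet}(\tilde{p}_0/2,(p-p_0)/2,(n-p)/2)$ as $(G_1,G_2,G_3)/(G_1+G_2+G_3)$ with independent $G_j \sim \Gamma(a_j,1)$, where a degenerate zero component is allowed when $\tilde p_0 = 0$. Then $T_2/(T_2+T_3) = G_2/(G_2+G_3) \sim \mathrm{Beta}\bigl(\frac{p-p_0}{2},\frac{n-p}{2}\bigr)$. The two expressions agree because $(I_n-P_0)\varepsilon = (I_n - P_0)\varepsilon'$, and its squared norm is $\norm{\varepsilon'}_2^2(T_2+T_3)$. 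For the equivalence with $\mathsf{F}$, I would write $U := T_2/(T_2+T_3)$. Then $\mathsf{F} = \frac{n-p}{p-p_0}\cdot\frac{U}{1-U}$, which is a fixed increasing bijection $(0,1)\to(0,\infty)$. The ratio of independent chi-squares $\chi^2_{p-p_0}$ and $\chi^2_{n-p}$, that is $2G_2$ and $2G_3$, maps to a Beta variable under this bijection, so the two distributional statements are equivalent.

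For \textit{(b)}, the plan is to reduce to part \textit{(a)} by showing $T$ has the required Dirichlet law. I would work conditionally on $P'$ throughout and choose coordinates so that $\mathcal{V}$ is identified with $\R^{\tilde n}$. Within $\mathcal{V}$, the three projections $P_0-P'$, $P-P_0$ and $I_n-P$ are orthogonal projections onto mutually orthogonal subspaces of dimensions $\tilde p_0$, $p-p_0$ and $n-p$ summing to $\tilde n$. These subspaces are determined by $X'$: $\mathrm{Im}(P_0 - P') = \mathrm{Im}(X'_{[\tilde p_0]})$ and $\mathrm{Im}(P-P') = \mathrm{Im}(X')$, since for instance $\mathrm{Im}(X_{[p_0]}) = \mathrm{Im}(P')\oplus\mathrm{Im}((I_n-P')X_{(p':p_0]})$. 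Hence $T$ is a function of $(X',\xi')$ with $\xi' := \varepsilon'/\norm{\varepsilon'}_2 \in \mathcal{V}$. Moreover $T(Q_0X',Q_0\xi') = T(X',\xi')$ for every $Q_0 \in \mathcal{O}'$, because conjugation by $Q_0$ maps each projection to the projection onto the rotated subspace.

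Next I would handle the two cases of the hypothesis. If $\xi'$ is rotationally invariant with respect to $\mathcal{V}$, then it is uniform on the unit sphere of $\mathcal{V}$. Conditioning on $X'$ (using independence), I would fix an orthonormal basis of $\mathcal{V}$ adapted to the three subspaces. In that basis, $\xi' \eqd Z/\norm{Z}_2$ with $Z\sim N_{\tilde n}(0,I)$, so $T = (\norm{Z_{S_1}}^2,\norm{Z_{S_2}}^2,\norm{Z_{S_3}}^2)/\norm{Z}^2$ with block sizes $\tilde p_0$, $p-p_0$ and $n-p$, which is the required Dirichlet law. If instead $X'$ is rotationally invariant, let $R \sim$ Haar on $\mathcal{O}'$ be independent of everything. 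Then $(X',\xi') \eqd (RX',\xi')$ by independence and invariance, and $T(RX',\xi') = T(X',R^\top\xi')$. Conditional on $(X',\xi')$, the vector $R^\top \xi'$ is uniform on the sphere of $\mathcal{V}$ and independent of $X'$, so we are back in the first case.

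For the Gaussian claim, conditionally on $P'$, $X' = (I_n-P')X_{(p':p]}$ has the law of $(I_n-P')GZ$-type objects with $G$ having i.i.d.\ $N(0,1)$ entries and $Z=\Sigma_X^{1/2}$. Since $Q_0(I_n-P') = (I_n-P')Q_0$ for $Q_0\in\mathcal O'$ and $Q_0G\eqd G$, rotational invariance of $X'$ follows; independence from $\varepsilon'$ is implicit in the hypothesis. The main obstacle is bookkeeping rather than ideas. The key points are checking that the subspaces depend only on $X'$ so that the equivariance holds, and ensuring measurability and full-rank almost surely within $\mathcal{V}$ so that the dimensions are exactly as claimed; I would cite the full-rank assumption for the latter.
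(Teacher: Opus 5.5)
Your proposal is correct and follows essentially the same route as the paper. Part (a) uses the gamma (equivalently chi-squared) representation of the Dirichlet law together with the monotone bijection $U \mapsto \frac{n-p}{p-p_0}\cdot\frac{U}{1-U}$. Part (b) shows that the three projections are functions of $X'$, computes the Dirichlet law directly when $\xi'$ is uniform on the unit sphere of $\mathcal{V}$, and reduces the rotationally invariant design case to this one. The only cosmetic difference is in that reduction: you use an independent Haar-distributed $R$ on $\mathcal{O}'$, whereas the paper shows that the law of $(\|P_1 v\|_2^2,\|P_2 v\|_2^2,\|P_3 v\|_2^2)$ does not depend on the fixed unit vector $v \in \mathcal{V}$ and then uses the independence of $\xi$ and $X'$.
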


The hypotheses of \textit{(b)} are satisfied if $X_{(p':p]}$ and~$\varepsilon$ are conditionally independent and at least one has a rotationally invariant distribution (on $\R^{n \times \tilde{p}}$ and $\R^n$ respectively) given~$P'$. This follows readily from the above definition of rotational invariance with respect to~$\mathcal{V}$.

\begin{proof}[Proof of Proposition~\ref{prop:spherical-sym}]
\emph{(a)} If $T \sim \mathrm{Dirichlet}\bigl(\frac{\tilde{p}_0}{2},\frac{p - p_0}{2},\frac{n - p}{2}\bigr)$, then letting $S_1 \sim \chi_{\tilde{p}_0}^2$, $S_2 \sim \chi_{p - p_0}^2$ and $S_3 \sim \chi_{n - p}^2$ be independent, we have
\[
T = (T_1,T_2,T_3) \eqd \Bigl(\frac{S_1}{S_1 + S_2 + S_3}, \frac{S_2}{S_1 + S_2 + S_3}, \frac{S_3}{S_1 + S_2 + S_3}\Bigr)
\]
\citep[e.g.][Proposition~G.3]{ghosal2017fundamentals}. Then
\begin{equation}
\label{eq:Fstar}
\mathsf{F} = \frac{T_2/(p - p_0)}{T_3/(n - p)} \eqd \frac{S_2/(p - p_0)}{S_3/(n - p)} \sim F_{p - p_0, n - p}.
\end{equation}
We have $\mathsf{F} \sim F_{p - p_0, n - p}$ if and only if
\begin{align*}
\frac{\norm{(P - P_0)\varepsilon}_2^2}{\norm{(I_n - P_0)\varepsilon}_2^2} = \frac{T_2}{T_2 + T_3} = \frac{(p - p_0)\mathsf{F}}{n - p + (p - p_0)\mathsf{F}} &\eqd \frac{S_2}{S_2 + S_3} \sim \mathrm{Beta}\Bigl(\frac{p - p_0}{2},\frac{n - p}{2}\Bigr),
\end{align*}
noting that $t \mapsto (p - p_0)t/\bigl(n - p + (p - p_0)t\bigr)$ is a bijective function from $(0,\infty)$ to $(0,1)$. 

\medskip
\noindent \textit{(b)} Define $P_1 := P_0 - P'$, $P_2 := P - P_0$ and $P_3 := I_n - P$, which we claim are projection matrices determined by $X' := (I_n - P')X_{(p':p]}$. To see this, first observe that $\mathrm{Im}(P') = \mathrm{Im}(X_{[p']}) \subseteq \mathrm{Im}(X) = \mathrm{Im}(P)$, so $P - P'$ is idempotent and since it is also symmetric, it is a projection matrix. In addition, for $v \in \R^n$, we can write $Pv = X_{[p']}u + X_{(p':p]}w$ for some $u \in \R^{p'}$ and $w \in \R^{\tilde{p}}$, so
\[
(P - P')v = (P - P')Pv = (P - P')X_{(p':p]}w = (I_n - P')X_{(p':p]}w.
\]
This shows that $\mathrm{Im}(P - P') \subseteq \mathrm{Im}(X')$. On the other hand, for $w \in \R^{\tilde{p}}$, we have
\[
(P - P')(I_n - P')X_{(p':p]}w = P(I_n - P')X_{(p':p]}w = (I_n - P')X_{(p':p]}w,
\]
so $\mathrm{Im}(X') \subseteq \mathrm{Im}(P - P')$, and hence $P_1 + P_2 = P - P'$ represents the orthogonal projection onto $\mathrm{Im}(X')$. Similarly, $P_1 = P_0 - P'$ represents the orthogonal projection onto the image of $(I_n - P')X_{(p':p_0]} = X'_{[\tilde{p}_0]}$, and $P_3 = I_n - P' - (P_1 + P_2)$, so $P_1,P_2,P_3$ are indeed functions of $X'$.

Suppose first that given $P'$, we have that $Q_0\varepsilon' \eqd \varepsilon'$ for all $Q_0 \in \mathcal{O}'$, in which case $\xi := \varepsilon'/\norm{\varepsilon'}_2 \sim \Unif(\mathcal{V} \cap \mathcal{S}^{n-1})$ given $P'$. Then letting $Z \sim N_n(0,I_n)$ be independent of $X'$, we have given $P'$ that
\[
\xi \eqd \frac{(I_n - P')Z}{\norm{(I_n - P')Z}_2} =: \xi^*,
\]
and $\xi,\xi^*$ are conditionally independent of $X'$ (and hence $(P_1,P_2,P_3)$ by the previous paragraph) given $P'$. Since $P_j P_k = 0$ for all distinct $j,k \in \{1,2,3\}$, the random vectors $P_1 Z$, $P_2 Z$ and $P_3 Z$ are independent given $X'$. Now by Cochran's theorem, $\norm{P_j Z}_2^2 \sim \chi_{\tr(P_j)}^2$ independently given $X'$ \citep[][Exercise~2.10]{samworth24modern}. Since $P_j(I_n - P') = P_j$ for all $j$, we have
\begin{align}
T = (\norm{P_1\xi}_2^2, \norm{P_2\xi}_2^2, \norm{P_3\xi}_2^2) &\eqd
\frac{(\norm{P_1 Z}_2^2, \norm{P_2 Z}_2^2, \norm{P_3 Z}_2^2)}{\norm{P_1 Z}_2^2 + \norm{P_2 Z}_2^2 + \norm{P_3 Z}_2^2} \notag \\[3pt]
\label{eq:spherical-sym-dirichlet}
&\sim \mathrm{Dirichlet}\Bigl(\frac{\tilde{p}_0}{2},\frac{p - p_0}{2},\frac{n - p}{2}\Bigr)
\end{align}
conditionally and hence unconditionally on $P'$, so $\mathsf{F} \sim F_{p - p_0, n - p}$ under $H_0$ by~\eqref{eq:Fstar}, as required.

\medskip

Now suppose instead that $X' = (I_n - P')X_{(p':p]}$ satisfies $Q_0 X' \eqd X'$ given $P'$ for every $Q_0 \in \mathcal{O}'$. Then $Q_0(P_0 - P')Q_0^\top$ and $Q_0(P - P')Q_0^\top$ represent the orthogonal projections onto the column spaces of $Q_0 X_{[\tilde{p}_0]}'$ and $Q_0 X'$ respectively, so
\begin{equation}
\label{eq:ortho-conjugation}
(P', P_0 - P', P - P') \eqd \bigl(Q_0 P'Q_0^\top,\,Q_0(P_0 - P')Q_0^\top,\,Q_0(P - P')Q_0^\top\bigr)
\end{equation}
and hence $(P_1,P_2,P_3) \eqd (Q_0 P_1 Q_0^\top, Q_0 P_2 Q_0^\top, Q_0 P_3 Q_0^\top)$ given $P'$. Now for any fixed $v,w \in \mathcal{V} \cap \mathcal{S}^{n-1}$, there exists $Q_0 \in \mathcal{O}'$ such that $Q_0^\top v = w$, so
\begin{align*}
(\norm{P_1 v}_2^2, \norm{P_2 v}_2^2, \norm{P_3 v}_2^2) &\eqd (\norm{Q_0 P_1 Q_0^\top v}_2^2,\norm{Q_0 P_2 Q_0^\top v}_2^2, \norm{Q_0 P_3 Q_0^\top v}_2^2) \\
&= (\norm{P_1 w}_2^2, \norm{P_2 w}_2^2, \norm{P_3 w}_2^2)
\end{align*}
given $P'$. In other words, the conditional distribution of the left-hand side does not depend on~$v$. By assumption, $\xi$ is independent of $X'$ given $P'$ and takes values in $\mathcal{V} \cap \mathcal{S}^{n-1}$, so
\[
T = (\norm{P_1\xi}_2^2, \norm{P_2\xi}_2^2, \norm{P_3\xi}_2^2) \eqd (\norm{P_1\xi^*}_2^2, \norm{P_2\xi^*}_2^2, \norm{P_3\xi^*}_2^2) =: T^*
\]
given $P'$, where $\xi^* \sim \Unif(\mathcal{V} \cap \mathcal{S}^{n-1})$ is also independent of $X'$ given $P'$. By the case where~$\varepsilon'$ has a rotationally invariant distribution studied above, $T^* \sim \mathrm{Dirichlet}\bigl(\frac{\tilde{p}_0}{2},\frac{p - p_0}{2},\frac{n - p}{2}\bigr)$, so the conclusion also holds in this case.

In particular, if $X_{(p':p]}$ has independent $N_{\tilde{p}}(0,\Sigma_X)$ rows, then letting $X^*$ be an $n \times \tilde{p}$ matrix with independent $N(0,1)$ entries given $P'$, we have $(I_n - P')X^* \eqd (I_n - P')Q_0 X^* = Q_0(I_n - P')X^*$ for every $Q_0 \in \mathcal{O}'$. Therefore,
\[
(I_n - P')X_{(p':p]} \eqd (I_n - P')X^*\Sigma_X^{1/2} \eqd Q_0(I_n - P')X^*\Sigma_X^{1/2} \eqd Q_0(I_n - P')X_{(p':p]}
\]
given $P'$ for every fixed $Q_0 \in \mathcal{O}'$, so $\mathsf{F} \sim F_{p-p_0, n-p}$ by the previous paragraph.
\end{proof}

\subsection{Proofs of upper bounds on the Kolmogorov distance in Section~\ref{sec:upper-bds}}

We adopt the same notation as before the statement of Theorem~\ref{thm:dK-W1-A} in the main text, except we instead define
\[
B_j := \mathrm{B}\Bigl(\frac{\tilde{p}_0}{2}, \frac{n - p_0 - j}{2}\Bigr) = \mathrm{B}\Bigl(\frac{\tilde{p}_0}{2}, \frac{\tilde{n} - \tilde{p}_0 - j}{2}\Bigr) ,
\]
for $j \in \{0,1,2\}$, when $\tilde{p}_0$ and $n - p_0 - j$ are positive integers. It will also be convenient to define
\[
D_1 \equiv D_1(\tilde{n},\tilde{p}_0) := \frac{B_1}{B_0},
\]
with the interpretation that $D_1(\tilde{n},0) := \lim_{x \searrow 0} D_1(\tilde{n},x) = 1$. 

\begin{theorem}
\label{thm:dK-W1}
Let $T^* \sim \mathrm{Dirichlet}\bigl(\frac{\tilde{p}_0}{2},\frac{p - p_0}{2},\frac{n - p}{2}\bigr)$.  Under $H_0$, we have
\begin{equation}
\label{eq:dK-W1-tilde}
\dK(\mathsf{F},\mathsf{F}^*) \leq C(\tilde{n},\tilde{p},\tilde{p}_0) \cdot
\begin{cases}
\W_1(\tilde{n}T,\tilde{n}T^*)^{1/3} \;\;&\text{if }k = 1 \\
\W_1(\tilde{n}T,\tilde{n}T^*)^{1/2} \;\;&\text{if }k \geq 2,
\end{cases}
\end{equation}
where $k := \min(p - p_0, n - p)$ and
\begin{align*}
C(\tilde{n},\tilde{p},\tilde{p}_0) &:= 
\begin{cases}
3 \cdot 2^{-1/2}\Bigl(\dfrac{D_1}{\tilde{n}^{1/2}B}\Bigr)^{2/3} \leq \dfrac{3\pi^{1/3}}{2^{5/6}}  \;\; &\text{if $k = 1$} \\[8pt]
2^{5/4}\Bigl(\dfrac{(\tilde{n} - 2)M}{\tilde{n}(\tilde{n} - \tilde{p}_0 - 2)}\Bigr)^{1/2} < 2^{3/4} \;\; &\text{if $k \geq 2$.}
\end{cases}
\end{align*}
\end{theorem}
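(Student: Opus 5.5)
Since $\mathsf{F}$ is a strictly increasing function of $U := T_2/(T_2+T_3)$ (Proposition~\ref{prop:spherical-sym}(a)), we have $\dK(\mathsf{F},\mathsf{F}^*) = \dK(U,U^*)$ with $U^* := T_2^*/(T_2^*+T_3^*) \sim \mathrm{Beta}\bigl(\frac{p-p_0}{2},\frac{n-p}{2}\bigr)$. The plan has three steps.

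\emph{Step 1: from $T$ to $U$.} Bound $\W_1(U,U^*)$ in terms of $\W_1(\tilde{n}T,\tilde{n}T^*)$ using an optimal coupling $(T,T^*)$. The map $(t_2,t_3) \mapsto t_2/(t_2+t_3)$ is not globally Lipschitz, because $t_2+t_3 = 1-t_1$ can be small. For $s = t_2+t_3$ and $s^* = t_2^*+t_3^*$,
\[
|U - U^*| \leq \frac{|t_2 - t_2^*| + U^*|s - s^*|}{s} \quad\text{and symmetrically}\quad |U-U^*| \leq \frac{|t_2-t_2^*|+U|s-s^*|}{s^*}.
\]
I would use the second form, dividing by $s^* = 1 - T_1^*$, so that
\[
\E|U-U^*| \lesssim \E\Bigl[\frac{\norm{T-T^*}_1}{1-T_1^*}\Bigr].
\]
Here $1-T_1^* \sim \mathrm{Beta}\bigl(\frac{\tilde{n}-\tilde{p}_0}{2},\frac{\tilde{p}_0}{2}\bigr)$ and is independent of $U^*$ (a Dirichlet aggregation property). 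Its negative moments are explicit: $\E\{(1-T_1^*)^{-1}\} = \frac{\tilde{n}-2}{\tilde{n}-\tilde{p}_0-2}$ and $\E\{(1-T_1^*)^{-1/2}\} = B_1/B_0 = D_1$. These match the constants $D_1$ and $\frac{\tilde{n}-2}{\tilde{n}-\tilde{p}_0-2}$ appearing in $C(\tilde{n},\tilde{p},\tilde{p}_0)$.

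\emph{Step 2: from $\W_1$ to $\dK$.} Apply Lemma~\ref{lem:holder-L1-Linfty}, i.e.\ \eqref{eq:holder}, with $G^*$ the $\mathrm{Beta}\bigl(\frac{p-p_0}{2},\frac{n-p}{2}\bigr)$ distribution function.
\begin{itemize}
\item If $k \geq 2$, $G^*$ is $(1,M)$-Lipschitz, giving $\dK \leq (2M\W_1(U,U^*))^{1/2}$.
\item If $k = 1$, say $p-p_0 = 1$, then $G^*(x) \leq \frac{2x^{1/2}}{B}$ near $0$. More generally $G^*$ is $\bigl(\frac12, \frac{2^{1/2}\cdot 2}{B}\bigr)$-H\"older (possibly up to a constant; the reflection $1-U$ handles $n-p=1$). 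This gives exponent $1/3$ and the constant $3\cdot 2^{-1/2}(\cdot)^{2/3}$ from $\frac{\gamma+1}{\gamma} = 3$.
\end{itemize}

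\emph{Step 3: the main obstacle.} The expectation in Step~1 does not factor, since $T-T^*$ is correlated with $T_1^*$. A direct Cauchy--Schwarz bound would introduce $\W_2$, not $\W_1$. I see two ways to handle this.
\begin{itemize}
\item \emph{Truncation.} Split on $\{1-T_1^* \geq \tau\}$. On the good event, $|U-U^*| \leq 2\norm{T-T^*}_1/\tau$. On the bad event, bound $|U-U^*| \leq 1$ and use the small-ball probability $\Pr(1-T_1^*<\tau)$ from the Beta density. Then optimise over $\tau$.
\item \emph{Bypass $\W_1(U,U^*)$.} Bound the Kolmogorov distance directly, by smoothing the indicator $\Ind\{U \leq x\}$ at scale $h$. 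Write $U \leq x$ as $t_2(1-x) - x t_3 \leq 0$, which is \emph{linear} in $T$. A $1/h$-Lipschitz smoothed indicator in the linear functional $L_x(T) := T_2(1-x)-xT_3$ then costs $\E|L_x(T)-L_x(T^*)|/h \lesssim \W_1(\tilde{n}T,\tilde{n}T^*)/(\tilde{n}h)$. The remaining anti-concentration term is $\Pr\bigl(|L_x(T^*)| \leq h\bigr) = \Pr\bigl(|U^*-x| \leq h/(1-T_1^*)\bigr)$. Using independence of $U^*$ and $T_1^*$, this is at most $M\,\E\{2h/(1-T_1^*)\}$ when $k \geq 2$, and $\lesssim \E\{(h/(1-T_1^*))^{1/2}\}/B = D_1 h^{1/2}/B$ when $k=1$. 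Optimising over $h$ yields exactly exponents $1/2$ and $1/3$, with the $\tilde{n}$-factors and constants $D_1$, $\frac{\tilde{n}-2}{\tilde{n}-\tilde{p}_0-2}$ appearing naturally.
\end{itemize}
I would pursue the second route. It is linear in $T$ and avoids the non-factoring issue, so the remaining work is tracking constants to reach the stated $C$ and checking the numerical upper bounds.
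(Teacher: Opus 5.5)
Your second route is the paper's proof. The paper writes $\{\mathsf{F}\le t\}=\{V^*\le s-(s\Delta_1+\Delta_2)/(1-T_1^*)\}$, which is your event $\{L_s(T)\le 0\}$ since $s\Delta_1+\Delta_2=L_s(T)-L_s(T^*)$. It then uses a hard sandwich at scale $\delta$ plus Markov's inequality on $\Pr(2^{1/2}\norm{\Delta}_2\ge\delta)$ in place of your smoothing, conditions on $T_1^*$ (independent of $V^*=U^*$), applies the Beta modulus $\pi\eta^{1/2}/B$ or $M\eta$, and integrates to produce $D_1$ or $(\tilde{n}-2)/(\tilde{n}-\tilde{p}_0-2)$ before optimising over $\delta$.

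Two points on the constants. First, only the one-sided window $\Pr\bigl(x<U^*\le x+h/(1-T_1^*)\bigr)$ is needed; your two-sided factor $2$ would overshoot $2^{5/4}$. Second, for $k=1$ this argument gives $3\cdot 2^{-1/2}\bigl(\pi D_1/(\tilde{n}^{1/2}B)\bigr)^{2/3}$, so the displayed constant has lost the factor $\pi$ that appears in Theorem~\ref{thm:dK-W1-A}, and you should not expect to reach it literally.
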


\begin{proof}[Proof of Theorem~\ref{thm:dK-W1}]
Given any coupling of $T$ and $T^*$, define
\[
\Delta = (\Delta_1,\Delta_2,\Delta_3) := T - T^* \quad\text{and}\quad V^* := \frac{T_2^*}{1 - T_1^*}.
\]
Fix $t > 0$ and let
\[
s := \frac{(p - p_0)t}{n - p + (p - p_0)t} \in (0,1).
\]
Since $T_1 + T_2 + T_3 = T_1^* + T_2^* + T_3^* = 1$, we have
\[
\Pr(\mathsf{F}^* \leq t) = \Pr\Bigl(\frac{T_2^*}{T_3^*} \leq \frac{(p - p_0)t}{n - p}\Bigr) = \Pr\Bigl(\frac{T_2^*}{1 - T_1^*} \leq \frac{(p - p_0)t}{n - p + (p - p_0)t}\Bigr) = \Pr(V^* \leq s),
\]
and
\begin{align*}
\Pr(\mathsf{F} \leq t) &= \Pr\Bigl(\frac{T_2}{T_3} \leq \frac{(p - p_0)t}{n - p}\Bigr) = \Pr\Bigl(\frac{T_2}{1 - T_1} \leq \frac{(p - p_0)t}{n - p + (p - p_0)t}\Bigr) \\[3pt]
&= \Pr\bigl(T_2^* + \Delta_2 \leq s(1 - T_1^* - \Delta_1)\bigr) = \Pr\Bigl(V^* \leq s - \frac{s\Delta_1 + \Delta_2}{1 - T_1^*}\Bigr).
\end{align*}
Now, for any $\delta > 0$, we can write
\begin{align*}
\Pr\Bigl(V^* \leq s - \frac{s\Delta_1 + \Delta_2}{1 - T_1^*}\Bigr) &\leq \Pr\Bigl(V^* \leq s + \frac{\delta}{1 - T_1^*}\Bigr) + \Pr(s\Delta_1 + \Delta_2 < -\delta)
\end{align*}
and
\begin{align*}
\Pr\Bigl(V^* \leq s - \frac{\delta}{1 - T_1^*}\Bigr) &\leq \Pr\Bigl(V^* \leq s - \frac{s\Delta_1 + \Delta_2}{1 - T_1^*}\Bigr) + \Pr(s\Delta_1 + \Delta_2 > \delta).
\end{align*}
Thus, since $|s\Delta_1 + \Delta_2| \leq 2^{1/2}\norm{\Delta}_2$, we have
\begin{align}
\dK(\mathsf{F},\mathsf{F}^*) &= \sup_{t > 0}\;\bigl|\Pr(\mathsf{F} \leq t) - \Pr(\mathsf{F}^* \leq t)\bigr| \notag \\
\label{eq:dK-Delta}
&\leq \max_{\epsilon \in \{-1,1\}}\sup_{s \in (0,1)}\:\biggl|\Pr\Bigl(V^* \leq s + \frac{\epsilon\delta}{1 - T_1^*}\Bigr) - \Pr(V^* \leq s)\biggr| + \Pr(2^{1/2}\norm{\Delta}_2 \geq \delta).
\end{align}
By~\citet[Proposition~G.3]{ghosal2017fundamentals},
\begin{equation}
\label{eq:dirichlet-indep}
T_1^* \sim \mathrm{Beta}\Bigl(\frac{\tilde{p}_0}{2}, \frac{n - p_0}{2}\Bigr) \quad\text{and}\quad V^* = \frac{T_2^*}{1 - T_1^*} \sim \mathrm{Beta}\Bigl(\frac{p - p_0}{2}, \frac{n - p}{2}\Bigr)
\end{equation}
are independent when $\tilde{p}_0 \geq 1$. On the other hand, when $\tilde{p}_0 = 0$, we have $T_1^* = 0$ and $V^* = T_2^* \sim \mathrm{Beta}\bigl(\frac{p - p_0}{2},\frac{n - p}{2}\bigr)$, so these random variables remain independent.

Recall the $\mathrm{Beta}(a,b)$ density $g_{a,b}$ from~\eqref{Eq:BetaDensity}. If $a < 1$, then $\lim_{x \searrow 0} g_{a,b}(x) = \infty$. In this case, $g_{a,b}$ is decreasing on $(0,1)$ when $b \geq 1$, and otherwise decreasing on $\bigl(0,\frac{a-1}{a+b-2}\bigr)$ and symmetric about $1/2$ if $b = a$. Thus for $\eta > 0$, we have
\begin{align}
\label{eq:W-prob}
&\sup_{v \in \R}\,\bigl\{\Pr(V^* \leq v + \eta) - \Pr(V^* \leq v)\bigr\} \notag \\
&\hspace{2cm}
\begin{cases}
\,\leq \displaystyle\int_0^{\eta \wedge 1} \frac{x^{-1/2}(1 - x)^{-1/2}}{B}\,dx
\leq \frac{\pi\eta^{1/2}}{B}
\quad &\text{if }k = 1 \\[6pt]
\,\leq M\eta
\quad &\text{if }k \geq 2.
\end{cases}
\end{align}

By the tower property of expectation and the independence of $T_1^*$ and $V^*$, we deduce from~\eqref{eq:W-prob} that for every $\delta > 0$, and when $\tilde{p}_0 \geq 1$,
\begin{align}
\max_{\epsilon \in \{-1,1\}} &\sup_{s \in (0,1)}\:\biggl|\Pr\Bigl(V^* \leq s + \frac{\epsilon\delta}{1 - T_1^*}\Bigr) - \Pr(V^* \leq s)\biggr| \notag \\
&\leq \int_0^1 \sup_{v \in \R}\,\Bigl\{\Pr\Bigl(V^* \leq v + \frac{\delta}{1 - w}\Bigr) - \Pr(V^* \leq v)\Bigr\} \cdot \frac{w^{\frac{\tilde{p}_0}{2} - 1}(1 - w)^{\frac{n - p_0}{2} - 1}}{B_0}\,dw \notag \\[3pt]
&\leq
\begin{cases}
\,\displaystyle\frac{\pi\delta^{1/2}}{B_0 B} \int_0^1 w^{\frac{\tilde{p}_0}{2} - 1}(1 - w)^{\frac{n - p_0 - 1}{2} - 1}\,dw = \frac{\pi D_1}{B} \cdot \delta^{1/2} \quad &\text{if }k = 1 \\[12pt]
\,\displaystyle\frac{M\delta}{B_0} \int_0^1 w^{\frac{\tilde{p}_0}{2} - 1}(1 - w)^{\frac{n - p_0 - 2}{2} - 1}\,dw = \frac{MB_2}{B_0} \cdot \delta = \frac{(\tilde{n} - 2)M}{n - p_0 - 2} \cdot \delta \quad &\text{if }k \geq 2.
\end{cases}
\notag
\end{align}
Observe that by our definition of $D_1$ when $\tilde{p}_0 = 0$, these final bounds hold even when $\tilde{p}_0 = 0$. By applying Markov's inequality and optimising over $\delta > 0$ in~\eqref{eq:dK-Delta}, we deduce when $k = 1$ that
\begin{align*}
\dK(\mathsf{F},\mathsf{F}^*) &\leq \inf_{\delta > 0}\:\biggl(\frac{\pi D_1}{B} \cdot \delta^{1/2} + \frac{2^{1/2}\,\E(\norm{\Delta}_2)}{\delta}\biggr) = 3 \cdot 2^{-1/2}\Bigl(\frac{\pi D_1}{B}\Bigr)^{2/3} \bigl\{\E(\norm{T - T^*}_2)\bigr\}^{1/3},
\end{align*}
and when $k \geq 2$ that
\begin{align*}
\dK(\mathsf{F},\mathsf{F}^*) &\leq \inf_{\delta > 0}\:\biggl(\frac{(\tilde{n} - 2)M}{n - p_0 - 2} \cdot \delta + \frac{2^{1/2}\,\E(\norm{\Delta}_2)}{\delta}\biggr) = 2^{5/4}\,\biggl(\frac{(\tilde{n} - 2)M}{n - p_0 - 2}\biggr)^{1/2} \bigl\{\E(\norm{T - T^*}_2)\bigr\}^{1/2}.
\end{align*}
These bounds hold for any coupling of $T$ and $T^*$, so on the right-hand side, we may take an infimum over all couplings to obtain the desired conclusion~\eqref{eq:dK-W1-tilde}.

It remains to bound $D_1/B$ and $M$. When $k = 1$, we have
\begin{align*}
\frac{D_1}{B}
=\frac{\mathrm{B}\bigl(\frac{\tilde{p}_0}{2}, \frac{n - p_0 - 1}{2}\bigr)}
{\mathrm{B}\bigl(\frac{\tilde{p}_0}{2}, \frac{n - p_0}{2}\bigr)\,
\mathrm{B}\bigl(\frac{p - p_0}{2}, \frac{n - p}{2}\bigr)} = \frac{\Gamma\bigl(\frac{n - p_0}{2}\bigr)}{\Gamma\bigl(\frac{p - p_0}{2}\bigr)\Gamma\bigl(\frac{n - p}{2}\bigr)} \cdot \frac{\Gamma\bigl(\frac{n - p_0 - 1}{2}\bigr)}{\Gamma\bigl(\frac{n - p_0}{2}\bigr)} \cdot \frac{\Gamma\bigl(\frac{\tilde{n}}{2}\bigr)}{\Gamma\bigl(\frac{\tilde{n} - 1}{2}\bigr)} = \frac{1}{\pi^{1/2}} \frac{\Gamma\bigl(\frac{\tilde{n}}{2}\bigr)}{\Gamma\bigl(\frac{\tilde{n} - 1}{2}\bigr)}.
\end{align*}
Thus, by Wendel's inequality \citep{wendel1948note},
\begin{equation}
\label{eq:wendel-bound}
\frac{\tilde{n} - 1}{(2\pi \tilde{n})^{1/2} }\leq \frac{D_1}{B}\leq \Bigl(\frac{\tilde{n} - 1}{2\pi}\Bigr)^{1/2},
\end{equation}
so if $k = 1$, then
\begin{align*}
\dK(\mathsf{F},\mathsf{F}^*) \leq \frac{3\pi^{1/3}}{2^{5/6}} \W_1(\tilde{n}T,\tilde{n}T^*)^{1/3}.
\end{align*}
When $k \geq 2$, define $a := (p - p_0)/2$ and $b := (n - p)/2$. Consider $c := a + b = (n - p_0)/2$ as fixed and define $\ell(a) := \log\mathrm{M}(a,b) = \log\mathrm{M}(a,c-a)$.  If $a + b > 2$, then
\begin{equation}
\label{eq:max-beta}
M = \mathrm{M}(a,b) = g_{a,b}\Bigl(\frac{a - 1}{a + b - 2}\Bigr) = \frac{(a-1)^{a-1}(b-1)^{b-1}\Gamma(a+b)}{(a+b-2)^{a+b-2}\Gamma(a)\Gamma(b)}.
\end{equation}
Writing $\psi = (\log\Gamma)''$ for the trigamma function, we have for $x > 1$ that
\[
\psi(x) = \sum_{i=0}^\infty \frac{1}{(x+i)^2} < \int_{-1}^\infty \frac{1}{(x+z)^2}\,dz = \frac{1}{x-1}.
\]
Hence, for $a \in (1,c - 1)$, we have
\begin{align*}
\ell''(a) &= -\psi(a)-\psi(c-a)+\frac{1}{a-1}+\frac{1}{c-a-1}\\
&>-\frac{1}{a-1}-\frac{1}{c-a-1}+\frac{1}{a-1}+\frac{1}{c-a-1}=0.
\end{align*}
Since $\ell$ is also continuous on $[1,c-1]$, it is convex on $[1,c-1]$, so
\[
\max_{a \in [1,c-1]} \mathrm{M}(a,c-a) = \max\bigl\{\mathrm{M}(1,c-1),\mathrm{M}(c-1,1)\bigr\} = c-1 = \frac{n - p_0 - 2}{2}.
\]
On the other hand, if $a + b = 2$, then $M = \mathrm{M}(1,1) = 1 = (n - p_0 - 2)/2$ as before. Thus, when $k \geq 2$, we have
\[
\dK(\mathsf{F},\mathsf{F}^*) \leq 2^{3/4}\Bigl(\frac{\tilde{n} - 2}{\tilde{n}}\Bigr)^{1/2}\W_1(\tilde{n}T,\tilde{n}T^*)^{1/2} < 2^{3/4}\W_1(\tilde{n}T,\tilde{n}T^*)^{1/2},
\]
as required.
\end{proof}

\begin{lemma}
\label{lem:dK-W1-constant}
Let $C(\tilde{n},\tilde{p},\tilde{p}_0) > 0$ be taken from Theorem~\ref{thm:dK-W1}. When $k = 1$,
\[
\frac{3\pi^{1/3}}{2^{3/2}} \leq C(\tilde{n},\tilde{p},\tilde{p}_0) < \frac{3\pi^{1/3}}{2^{5/6}}.
\]
When $k \geq 2$, 
\[
\frac{2^{1/4}e^{-25/12}}{\pi^{1/4}}\Bigl(\frac{n - p_0}{(p - p_0)(n - p)}\Bigr)^{1/4} < C(\tilde{n},\tilde{p},\tilde{p}_0) \leq \frac{2^{5/4}}{\pi^{1/4}}\Bigl(\frac{n - p_0}{(p - p_0)(n - p)}\Bigr)^{1/4}.
\]
\end{lemma}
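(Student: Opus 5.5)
The plan is to treat the two cases separately, starting from the explicit expressions for $D_1/B$ and $M$ already obtained in the proof of Theorem~\ref{thm:dK-W1}. Throughout I use the form of the constant with the factor $\pi$, as in~\eqref{eq:W1-constant}. Note first that $\tilde n = n - p' \geq p - p' + 1 \geq 2$.

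\emph{Case $k=1$.} The proof of Theorem~\ref{thm:dK-W1} gives $D_1/B = \pi^{-1/2}\Gamma(\tilde n/2)/\Gamma\bigl((\tilde n-1)/2\bigr)$, and Wendel's bounds~\eqref{eq:wendel-bound} then give
\[
\frac{\tilde n - 1}{\tilde n (2\pi)^{1/2}} \leq \frac{D_1}{\tilde n^{1/2}B} \leq \frac{(\tilde n-1)^{1/2}}{(2\pi \tilde n)^{1/2}} < \frac{1}{(2\pi)^{1/2}}.
\]
For the upper bound, substitute the rightmost estimate: $3\cdot 2^{-1/2}\bigl(\pi/(2\pi)^{1/2}\bigr)^{2/3} = 3\cdot 2^{-1/2}(\pi/2)^{1/3} = 3\pi^{1/3}/2^{5/6}$, and the inequality is strict.

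For the lower bound, use $(\tilde n-1)/\tilde n \geq 1/2$ when $\tilde n \geq 2$. This gives $\pi D_1/(\tilde n^{1/2}B) \geq (\pi/2)^{1/2}/2$, so
\[
C \geq 3\cdot 2^{-1/2}\cdot 2^{-2/3}(\pi/2)^{1/3} = 3\pi^{1/3}/2^{3/2}.
\]
Equality occurs at $\tilde n = 2$, where Wendel's inequality is attained.

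\emph{Case $k\ge 2$.} Set $a=(p-p_0)/2$, $b=(n-p)/2$ and $c=a+b=(n-p_0)/2=(\tilde n-\tilde p_0)/2$, so that $a,b\ge1$ and $c\ge2$. Then
\[
C^4 = 2^5\Bigl(\frac{\tilde n-2}{\tilde n}\Bigr)^2\frac{M^2}{4(c-1)^2},
\qquad
\frac{n-p_0}{(p-p_0)(n-p)} = \frac{c}{2ab}.
\]
The two claimed bounds therefore reduce to two-sided estimates of the form
\[
M \asymp \frac{c-1}{\pi^{1/2}}\Bigl(\frac{c}{ab}\Bigr)^{1/2},
\]
with explicit constants. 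The factor $(\tilde n-2)/\tilde n$ lies in $[1/2,1)$, because $\tilde n \geq n - p_0 \geq 4$. This factor is used only in the lower bound.

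To obtain these estimates I would plug Robbins' form of Stirling's formula, $\Gamma(x)=(2\pi)^{1/2}x^{x-1/2}e^{-x}e^{\theta_x/(12x)}$ with $\theta_x\in(0,1)$, into the closed form~\eqref{eq:max-beta}. This gives
\[
M=\frac{1}{(2\pi)^{1/2}}\,\frac{(1-1/a)^{a-1}(1-1/b)^{b-1}}{(1-2/c)^{c-2}}\cdot\frac{c^{c-1/2}}{a^{1/2}b^{1/2}\,c^{c-2}\cdots}
\]
together with Stirling correction factors, and after cancellation the powers of $c$ leave exactly $(c/(ab))^{1/2}$ times a factor comparable to $c$.

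\emph{Main obstacle.} The expected difficulty is controlling the factors $(1-1/a)^{a-1}\in[e^{-1},1]$ and $(1-2/c)^{c-2}\in[e^{-2},1]$, together with the Robbins corrections, uniformly down to the boundary values $a=1$ or $b=1$, and doing so sharply enough to obtain the constants $2^{5/4}\pi^{-1/4}$ and $2^{1/4}e^{-25/12}\pi^{-1/4}$. The accumulated worst-case exponentials are $e^{-2}$ from these factors and $e^{-1/12}$ from Robbins, which is where $e^{-25/12}$ should come from.

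For the upper bound, if the Stirling route is lossy near the boundary, I would instead use the log-convexity of $a\mapsto \mathrm{M}(a,c-a)$ established in the proof of Theorem~\ref{thm:dK-W1}. Combined with the boundary value $\mathrm{M}(1,c-1)=c-1$, this handles the endpoint cases directly and reduces the problem to checking the interior case, where all of $a$, $b$ and $c$ are bounded away from the singular values.
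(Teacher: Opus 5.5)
Your case $k=1$ is correct and is exactly the paper's argument: Wendel's bounds~\eqref{eq:wendel-bound} together with $(\tilde n-1)/\tilde n\ge 1/2$. One side remark is wrong. You say equality is attained at $\tilde n=2$, but there $D_1/B=\pi^{-1/2}\Gamma(1)/\Gamma(1/2)=1/\pi$, which strictly exceeds Wendel's lower bound $1/(2\pi^{1/2})$. This is harmless, since the inequality is non-strict. For $k\ge 2$ your route is also the paper's route: insert an explicit Stirling remainder into the closed form~\eqref{eq:max-beta}. Carried out, this does yield the stated lower bound, with room to spare.

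The gap is the upper bound for $k\ge 2$. It is false, so no care in the Stirling bookkeeping can produce it. It fails precisely at the endpoint your fallback claims to handle directly.

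\begin{itemize}
\item Take $p-p_0=2$, i.e.\ $a=1$. Then $M=\mathrm{M}(1,c-1)=c-1=(n-p_0-2)/2$ exactly, so $C(\tilde n,\tilde p,\tilde p_0)=2^{5/4}\{(\tilde n-2)/(2\tilde n)\}^{1/2}$.
\item The claimed bound is $2^{5/4}\pi^{-1/4}\{(n-p_0)/(2(n-p_0-2))\}^{1/4}$.
\item As $n\to\infty$, the former tends to $2^{3/4}\approx 1.68$ and the latter to $2\pi^{-1/4}\approx 1.50$.
\item Concretely, $p'=p_0=0$, $p=2$, $n=100$ gives $C=0.7\cdot 2^{5/4}\approx 1.665>1.510$.
\item Your reduction discards $(\tilde n-2)/\tilde n$ for the upper bound. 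In it, $a=1$ would require $\pi(c-1)\le 2c$, which is false for $c\ge 3$.
\end{itemize}

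The paper's proof of this bound has a sign slip. Expanding $-(c-2)\log(c-2)$ gives $-(c-2)\log(1-2/c)\in[0,2)$, not $+(c-2)\log(1-2/c)\in(-2,0]$. So $M$ carries the factor $(1-2/c)^{-(c-2)}\in[1,e^2)$, not $(1-2/c)^{c-2}\le 1$ as the paper's bookkeeping assumes. Your own display puts $(1-2/c)^{c-2}$ correctly in the denominator, and carrying it through would have exposed the problem.

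Along $a=1$, $n\to\infty$, one has $C\{(p-p_0)(n-p)/(n-p_0)\}^{1/4}\to 2$. Any correct upper bound of this form therefore needs a constant of at least $2$ in place of $2^{5/4}\pi^{-1/4}\approx 1.79$. The lower bound is unaffected, since the correct sign only helps there.
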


\begin{proof}
When $k = 1$, it follows from~\eqref{eq:wendel-bound} that
\[
\frac{1}{2\pi^{1/3}} \leq \Bigl(\frac{\tilde{n} - 1}{(2\pi)^{1/2}\,\tilde{n}}\Bigr)^{2/3} \leq \frac{C(\tilde{n},\tilde{p},\tilde{p}_0)}{3 \cdot 2^{-1/2}\pi^{2/3}} = \Bigl(\frac{D_1}{\tilde{n}^{1/2}B}\Bigr)^{2/3} \leq \Bigl(\frac{\tilde{n} - 1}{2\pi \tilde{n}}\Bigr)^{1/3} < \frac{1}{(2\pi)^{1/3}}
\]
since $\tilde{n} \geq 2$. Now consider $k \geq 2$. For $x > 0$, \citet[Lemma~10]{Duembgen2021} states that
\begin{equation}
\label{eq:log-gamma}
\log\Gamma(x) = \frac{1}{2}\log(2\pi) - x + \Bigl(x - \frac{1}{2}\Bigr)\log x + R(x),
\end{equation}
where $1/(12x + 1) < R(x) < 1/(12x)$.  As in the proof of Theorem~\ref{thm:dK-W1}, let $a = (p - p_0)/2 \geq 1$, $b = (n - p)/2 \geq 1$ and $c = a + b = (n - p_0)/2 \geq 2$.  With a view to approximating $M = \mathrm{M}(a,b)$ in~\eqref{eq:max-beta}, we have by~\eqref{eq:log-gamma} that
\begin{align*}
\log\frac{(a - 1)^{a - 1}}{\Gamma(a)} &= (a - 1)\log(a - 1) - \Bigl(a - \frac{1}{2}\Bigr)\log a + a - \frac{1}{2}\log(2\pi) - R(a) \\
&= -\frac{\log a}{2} + (a - 1)\log\Bigl(1 - \frac{1}{a}\Bigr) + a - \frac{1}{2}\log(2\pi) - R(a)
\end{align*}
when $a > 1$, where $-1 < (a - 1)\log(1 - 1/a) < 0$. On the other hand, the left-hand side is equal to 0 when $a = 1$, so 
\[
- 1 - \frac{1}{12a} \leq \log\frac{(a - 1)^{a - 1}}{\Gamma(a)} + \frac{\log a}{2} - a + \frac{1}{2}\log(2\pi) \leq -\frac{1}{12a + 1}
\]
in all cases. Similarly,
\[
- 1 - \frac{1}{12b} \leq \log\frac{(b - 1)^{b - 1}}{\Gamma(b)} + \frac{\log b}{2} - b + \frac{1}{2}\log(2\pi) \leq -\frac{1}{12b + 1}.
\]
Moreover, 
\begin{align*}
&\log\frac{\Gamma(c)}{(c - 2)^{c - 2}(n - p_0 - 2)} \\[3pt]
&\hspace{1.5cm}= \frac{1}{2}\log(2\pi) - c + \Bigl(c - \frac{1}{2}\Bigr)\log c + R(c) - (c - 2)\log(c - 2) - \log(2c - 2) \\[3pt]
&\hspace{1.5cm}= \frac{\log c}{2} - c + (c - 2)\log\Bigl(1 - \frac{2}{c}\Bigr) - \log\Bigl(1 - \frac{1}{c}\Bigr) + \frac{1}{2}\log\frac{\pi}{2} + R(c),
\end{align*}
where $-2 < (c - 2)\log(1 - 2/c) - \log(1 - 1/c) \leq \log 2$. Therefore, by~\eqref{eq:max-beta} and the bounds above,
\[
\log\frac{M}{n - p_0 - 2} \leq \frac{1}{2}\log\frac{c}{ab} - \frac{1}{2}\log(2\pi) + \frac{1}{12c} - \frac{1}{12a + 1} - \frac{1}{12b + 1} \leq \frac{1}{2}\log\frac{c}{ab} - \frac{1}{2}\log(2\pi)
\]
and
\begin{align*}
\log\frac{M}{n - p_0 - 2} > \frac{1}{2}\log\frac{c}{ab} - \frac{1}{2}\log(8\pi) - 4 + \frac{1}{12c + 1} - \frac{1}{12a} - \frac{1}{12b} > \frac{1}{2}\log\frac{c}{ab} - \frac{1}{2}\log(8\pi) - \frac{25}{6}.
\end{align*}
We conclude that
\[
C(\tilde{n},\tilde{p},\tilde{p}_0) = 2^{5/4}\Bigl(\frac{(\tilde{n} - 2)M}{\tilde{n}(n - p_0 - 2)}\Bigr)^{1/2} \leq 2^{5/4}\Bigl(\frac{n - p_0}{\pi(p - p_0)(n - p)}\Bigr)^{1/4}
\]
and
\[
C(\tilde{n},\tilde{p},\tilde{p}_0) > 2^{5/4} \cdot \frac{e^{-25/12}}{2^{1/2}} \Bigl(\frac{2(n - p_0)}{8\pi(p - p_0)(n - p)}\Bigr)^{1/4} = \frac{2^{1/4}e^{-25/12}}{\pi^{1/4}}\Bigl(\frac{n - p_0}{(p - p_0)(n - p)}\Bigr)^{1/4},
\]
since $\tilde{n} \geq 2k \geq 4$, as required.
\end{proof}

Next, we establish a double robustness result that naturally extends Theorem~\ref{thm:doubly-robust-A} for a general $p' \in \{0,1,\dotsc,p_0\}$. Since the $F$-statistic $\mathsf{F}$ depends on $X$ only through the matrices $P_0$ and $P$, we first derive a bound based on the proximity between projection matrices before relating it more directly to the notion of rotational invariance featured in Proposition~\ref{prop:spherical-sym}\textit{(b)}. Formally, write $\mathcal{P}$ for the set of pairs of orthogonal projection matrices $(P_1,P_2)$ such that $\tr(P_1) = p_0 - p'$, $\tr(P_2) = p - p_0$, $P_1 P_2 = 0$, $P'P_1 = 0$, $P'P_2 = 0$; in other words, $P',P_1,P_2$ represent orthogonal projections onto mutually orthogonal subspaces, of dimensions $p'$, $\tilde{p}_0 = p_0 - p'$ and $p - p_0$ respectively. For random pairs $(P_1,P_2),(\tilde{P}_1,\tilde{P}_2)$ taking values in~$\mathcal{P}$, we can define the metric
\begin{equation}
\label{eq:sin-Theta}
d\bigl((P_1,P_2),(\tilde{P}_1,\tilde{P}_2)\bigr) := (\norm{P_1 - \tilde{P}_1}_{\mathrm{F}}^2 + \norm{P_2 - \tilde{P}_2}_{\mathrm{F}}^2)^{1/2}
\end{equation}
and the 2-Wasserstein distance $\W_2\bigl((P_1,P_2),(\tilde{P}_1,\tilde{P}_2)\bigr)$ between their distributions as the infimum of $\E\bigl\{d\bigl((P_1,P_2),(\tilde{P}_1,\tilde{P}_2)\bigr)^2\bigr\}^{1/2}$ over all couplings. In fact, recalling from e.g.~\citet{davis1970rotation} and~\citet{yu2015useful} the definition of the \emph{$\sin\theta$ distance} between matrices with orthonormal columns,\footnote{Recall that if $V,V' \in \mathcal{O}_{n \times d}$, then the vector of $d$ principal angles between their column spaces is given by $(\arccos \sigma_1,\dotsc,\arccos\sigma_d)$, where $\sigma_1 \geq \cdots \geq \sigma_d$ are the singular values of $V^\top V'$. If $\Theta(V,V')$ denotes the $d \times d$ diagonal matrix whose $j$th diagonal entry is the $j$th principal angle, then $\sin\Theta(V,V')$ is defined entrywise.} we have
\[
d^2\bigl((P_1,P_2),(\tilde{P}_1,\tilde{P}_2)\bigr) = \sum_{j=1}^2 \norm{Q_j Q_j^\top - \tilde{Q}_j\tilde{Q}_j^\top}_{\mathrm{F}}^2 = 2\sum_{j=1}^2 \norm{\sin\Theta(Q_j,\tilde{Q}_j)}_{\mathrm{F}}^2
\]
for any $Q_1,\tilde{Q}_1 \in \mathcal{O}_{n \times \tilde{p}_0}$ and $Q_2,\tilde{Q}_2 \in \mathcal{O}_{n \times \tilde{p}}$ such that $\mathrm{Im}(Q_j) = \mathrm{Im}(P_j)$ and $\mathrm{Im}(\tilde{Q}_j) = \mathrm{Im}(\tilde{P}_j)$ for $j \in \{1,2\}$. Moreover,
\[
0 \leq \norm{Q_j^\top \tilde{Q}_j - I_{\tr(P_j)}}_{\mathrm{F}}^2 = \norm{Q_j^\top \tilde{Q}_j}_{\mathrm{F}}^2 - 2\tr(Q_j^\top \tilde{Q}_j) + \tr(P_j), 
\]
so because $P_j = Q_j Q_j^\top$ and $\tilde{P}_j = \tilde{Q}_j\tilde{Q}_j^\top$, we have $\norm{Q_j}_{\mathrm{F}}^2 = \tr(P_j) = \tr(\tilde{P}_j) = \norm{\tilde{Q}_j}_{\mathrm{F}}^2 $ and hence
\begin{align*}
d\bigl((P_1,P_2),(\tilde{P}_1,\tilde{P}_2)\bigr)^2 = \sum_{j=1}^2 \norm{Q_j Q_j^\top - \tilde{Q}_j \tilde{Q}_j^\top}_{\mathrm{F}}^2 &= \sum_{j=1}^2 (\norm{Q_j}_{\mathrm{F}}^2 - 2\norm{Q_j^\top \tilde{Q}_j}_{\mathrm{F}}^2 + \norm{\tilde{Q}_j}_{\mathrm{F}}^2) \\
&\leq 2\sum_{j=1}^2 \bigl(\norm{Q_j}_{\mathrm{F}}^2 - 2\tr(Q_j^\top \tilde{Q}_j) + \norm{\tilde{Q}_j}_{\mathrm{F}}^2\bigr) \\
&= 2\bigl(\norm{Q_1 - \tilde{Q}_1}_{\mathrm{F}}^2 + \norm{Q_2 - \tilde{Q}_2}_{\mathrm{F}}^2\bigr) = 2\norm{Q - \tilde{Q}}_{\mathrm{F}}^2.
\end{align*}
Therefore, denoting by $\mathcal{Q} \equiv \mathcal{Q}(P_1,P_2)$ the set of all matrices $Q$ with the above properties, and defining $\tilde{\mathcal{Q}} \equiv \mathcal{Q}(\tilde{P}_1,\tilde{P}_2)$ analogously, we have
\begin{equation}
\label{eq:subspace-metrics}
d\bigl((P_1,P_2),(\tilde{P}_1,\tilde{P}_2)\bigr) \leq 2^{1/2}\inf_{Q \in \mathcal{Q},\,\tilde{Q} \in \tilde{\mathcal{Q}}} \norm{Q - \tilde{Q}}_{\mathrm{F}} =: 2^{1/2}d'\bigl((P_1,P_2),(\tilde{P}_1,\tilde{P}_2)\bigr).
\end{equation}
Writing $\tilde{\mathcal{O}}_{\tilde{p} \times \tilde{p}}$ for the set of
$\Lambda \in \mathcal{O}_{\tilde{p} \times \tilde{p}}$ of the form 
$\Lambda = \bigl(\begin{smallmatrix}
\Lambda_1 & 0 \\
0 & \Lambda_2
\end{smallmatrix}\bigr)$, we see that for any $Q,\tilde{Q} \in \mathcal{Q}(P_1,P_2)$, there exists $\Lambda \in \tilde{\mathcal{O}}_{\tilde{p} \times \tilde{p}}$ such that $Q = \tilde{Q}\Lambda$. The same is true of $\mathcal{Q}(\tilde{P}_1,\tilde{P}_2)$, so since $\tilde{\mathcal{O}}_{\tilde{p} \times \tilde{p}}$ is a group and the Frobenius norm is orthogonally invariant,
\[
d'\bigl((P_1,P_2),(\tilde{P}_1,\tilde{P}_2)\bigr) = \inf_{\Lambda \in \tilde{\mathcal{O}}_{\tilde{p} \times \tilde{p}}} \norm{Q\Lambda - \tilde{Q}}_{\mathrm{F}}
\]
and hence $d'$ is a metric on~$\mathcal{P}$; in fact, it is a variant of the Procrustes distance \citep[e.g.][]{schonemann1966generalized,zhu2022high} with the infimum taken over the subset $\tilde{\mathcal{O}}_{\tilde{p} \times \tilde{p}}$ of $\mathcal{O}_{\tilde{p} \times \tilde{p}}$.  We can therefore define $\W_2'\bigl((P_1,P_2),(\tilde{P}_1,\tilde{P}_2)\bigr)$ as the infimum of $\E\bigl\{d'\bigl((P_1,P_2),(\tilde{P}_1,\tilde{P}_2)\bigr)^2\bigr\}^{1/2}$ over all couplings. It follows from~\eqref{eq:subspace-metrics} that for any $\tilde{Q}$ taking values in $\tilde{\mathcal{Q}}$, we have
\begin{equation}
\label{eq:wasserstein-metrics}
\W_2\bigl((P_1,P_2),(\tilde{P}_1,\tilde{P}_2)\bigr) \leq 2^{1/2}\W_2'\bigl((P_1,P_2),(\tilde{P}_1,\tilde{P}_2)\bigr) \leq 2^{1/2}\inf_{Q \in \mathcal{Q}(P_1,P_2)} \W_2(Q,\tilde{Q})
\end{equation}
for all random pairs $(P_1,P_2),(\tilde{P}_1,\tilde{P}_2)$ taking values in $\mathcal{P}$. Recall from the paragraph containing~\eqref{eq:ortho-subspace} the definitions of $\mathcal{V}$ and $\mathcal{V}_{\tilde{p}}$.

\begin{theorem}
\label{thm:doubly-robust}
Suppose that $X$ and $\varepsilon$ are independent, with $P'$ being deterministic, and let $C_{\varepsilon},C_X \geq 1$ be such that
\begin{equation}
\label{eq:exp-proj-eval}
\norm{\E(P - P')}_{\mathrm{op}}
\leq \frac{C_X\tilde{p}}{\tilde{n}}
\quad\text{and}\quad \norm{\E(\xi\xi^\top)}_{\mathrm{op}}
\leq \frac{C_\varepsilon}{\tilde{n}},
\end{equation}
where $\xi := \varepsilon'/\norm{\varepsilon'}_2$. Let $\xi^* \sim \Unif(\mathcal{V} \cap \mathcal{S}^{n-1})$ and $Q^* = (Q_1^* \;\: Q_2^*) \sim \Unif(\mathcal{V}_{\tilde{p}})$ with $Q_1^* = Q_{[\tilde{p}_0]}^*$, and define $(P_1^*,P_2^*) := (Q_1^*{Q_1^*}^\top,Q_2^*{Q_2^*}^\top)$.
Writing
\begin{align*}
A := \min\biggl\{(C_X\tilde{p})^{1/2}\,\W_2(\xi,\xi^*),\,C_\varepsilon^{1/2}\,\W_2\biggl(\binom{P_0 - P'}{P - P_0}, \binom{P_1^*}{P_2^*}\biggr)\biggr\},
\end{align*}
we have
\[
\W_1(\tilde{n}T,\tilde{n}T^*) \leq 2^{1/2}(A^2 + 2\tilde{p}^{1/2}A).
\]
Moreover,
\[
\W_2\biggl(\binom{P_0 - P'}{P - P_0}, \binom{P_1^*}{P_2^*}\biggr) \leq 2^{1/2}\inf_{Q \in \mathcal{Q}} \W_2(Q,Q^*),
\]
where $\mathcal{Q}$ denotes the set of all random matrices $Q$ taking values in $\mathcal{V}_{\tilde{p}}$ such that $\mathrm{Im}(Q_{[\tilde{p}_0]}) = \mathrm{Im}(X_{[\tilde{p}_0]}')$ and $\mathrm{Im}(Q) = \mathrm{Im}(X')$.
\end{theorem}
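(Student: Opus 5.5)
The plan is to build explicit couplings of $T$ and $T^*$ separately for each of the two terms in the minimum defining $A$. The key tool is the representation from the proof of Proposition~\ref{prop:spherical-sym}\textit{(b)}. Write $P_1 := P_0 - P'$, $P_2 := P - P_0$ and $P_3 := I_n - P$, so that $T_j = \xi^\top P_j\xi$. The proof of Proposition~\ref{prop:spherical-sym}\textit{(b)} shows that $T^*$ can be realised in two ways:
\begin{itemize}
\item as $(\xi^{*\top}P_j\xi^*)_{j}$, with $\xi^* \sim \Unif(\mathcal{V} \cap \mathcal{S}^{n-1})$ independent of $X$;
\item as $(\xi^\top P_j^*\xi)_j$, with $P_3^* := I_n - P' - P_1^* - P_2^*$ and $(P_1^*,P_2^*)$ independent of $\xi$ (here $P'$ is deterministic).
\end{itemize}

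\emph{Reduction to two coordinates.} Since both $T$ and $T^*$ lie on the simplex, $\Delta := T - T^*$ satisfies $\Delta_3 = -(\Delta_1 + \Delta_2)$. Hence
\[
\norm{\Delta}_2^2 = \Delta_1^2 + \Delta_2^2 + (\Delta_1+\Delta_2)^2 \leq 2(|\Delta_1| + |\Delta_2|)^2,
\]
and it suffices to show $\tilde{n}\,\E(|\Delta_1| + |\Delta_2|) \leq A^2 + 2\tilde{p}^{1/2}A$ for a suitable coupling, treating each term of the minimum in turn.

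\emph{Error term.} Take an optimal $\W_2$-coupling of $(\xi,\xi^*)$, independent of $X$, and set $u := \xi - \xi^*$. Then
\[
\Delta_j = \norm{P_j u}_2^2 + 2\langle P_j u, \xi^*\rangle.
\]
Summing over $j \in \{1,2\}$ and applying Cauchy--Schwarz across the orthogonal blocks gives
\[
|\Delta_1| + |\Delta_2| \leq \norm{(P-P')u}_2^2 + 2\norm{(P-P')u}_2\norm{(P-P')\xi^*}_2.
\]
By independence of $X$ and $(\xi,\xi^*)$, together with~\eqref{eq:exp-proj-eval},
\[
\E\norm{(P-P')u}_2^2 = \E\bigl\{u^\top \E(P-P')u\bigr\} \leq \frac{C_X\tilde{p}}{\tilde{n}}\W_2(\xi,\xi^*)^2.
\]
Rotational invariance of $\xi^*$ within $\mathcal{V}$ gives $\E\norm{(P-P')\xi^*}_2^2 = \tilde{p}/\tilde{n}$. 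One more Cauchy--Schwarz on the cross term then yields $\tilde{n}\,\E(|\Delta_1|+|\Delta_2|) \leq A^2 + 2\tilde{p}^{1/2}A$ with $A = (C_X\tilde{p})^{1/2}\W_2(\xi,\xi^*)$.

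\emph{Design term.} Take an optimal $\W_2$-coupling of $(P_1,P_2)$ and $(P_1^*,P_2^*)$, independent of $\xi$, and set $D_j := P_j - P_j^*$. The key algebraic identity, valid for projections, is
\[
D_j = D_j^2 + D_jP_j^* + P_j^*D_j,
\]
so that
\[
\Delta_j = \xi^\top D_j \xi = \norm{D_j\xi}_2^2 + 2\langle D_j\xi, P_j^*\xi\rangle.
\]
Conditioning on the projections and using~\eqref{eq:exp-proj-eval},
\[
\E\norm{D_j\xi}_2^2 = \E\tr\bigl(D_j^2\,\E(\xi\xi^\top)\bigr) \leq \frac{C_\varepsilon}{\tilde{n}}\E\norm{D_j}_{\mathrm{F}}^2,
\]
which sums to at most $C_\varepsilon \W_2^2/\tilde{n}$. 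For the cross term, rotational invariance of $(P_1^*,P_2^*)$ gives $\E(P_1^* + P_2^*) = \tilde{p}(I_n - P')/\tilde{n}$, so $\sum_j \E\norm{P_j^*\xi}_2^2 = \tilde{p}/\tilde{n}$. Cauchy--Schwarz (over $j$ and over expectation) then gives the same bound with $A = C_\varepsilon^{1/2}\W_2$.

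\emph{Combining and the final claim.} Taking the better of the two couplings gives $\W_1(\tilde{n}T,\tilde{n}T^*) \leq 2^{1/2}(A^2 + 2\tilde{p}^{1/2}A)$. The final inequality in the theorem is exactly~\eqref{eq:wasserstein-metrics} applied with $\tilde{Q} = Q^*$, noting that $Q \in \mathcal{Q}$ spans the correct nested subspaces.

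The main obstacle is the design case: one needs a decomposition of $\xi^\top(P_j - P_j^*)\xi$ into a term quadratic in $D_j$ plus a term linear in $D_j$, so that the Frobenius (Wasserstein) cost enters squared and linearly in the same way as in the error case. The identity $D_j = D_j^2 + D_jP_j^* + P_j^*D_j$ is what achieves this.
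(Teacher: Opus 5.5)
Your proposal is correct and is essentially the paper's proof. It uses the same two couplings (an optimal $\W_2$-coupling of $(\xi,\xi^*)$ independent of $X$, and an optimal coupling of $(P_1,P_2)$ with $(P_1^*,P_2^*)$ independent of $\xi$) and the same quadratic-plus-linear expansions, namely $\norm{P_j\xi}_2^2-\norm{P_j\xi^*}_2^2=\norm{P_ju}_2^2+2\langle P_ju,\xi^*\rangle$ and $\norm{P_j\xi}_2^2-\norm{P_j^*\xi}_2^2=\norm{D_j\xi}_2^2+2\langle D_j\xi,P_j^*\xi\rangle$; your identity $D_j=D_j^2+D_jP_j^*+P_j^*D_j$ is just the matrix form of the latter. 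It then applies the two operator-norm conditions and Cauchy--Schwarz in the same way and reads off the final claim from \eqref{eq:wasserstein-metrics}. The only cosmetic difference is that you apply Cauchy--Schwarz across $j\in\{1,2\}$ pointwise before taking expectations, whereas the paper does it per $j$ in expectation and then combines; both give the same constant.
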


\begin{proof}[Proof of Theorem~\ref{thm:doubly-robust}]
Let $\xi^* \sim \Unif(\mathcal{V} \cap \mathcal{S}^{n-1})$ be independent of $X$ with $(\xi,\xi^*)$ being an optimal coupling attaining $\W_2(\xi,\xi^*)$. Letting $P_1 := P_0 - P'$, $P_2 := P - P_0$ and $P_3 := I_n - P$, we have
\begin{align*}
T &= \bigl(\norm{P_1\xi}_2^2, \norm{P_2\xi}_2^2, \norm{P_3\xi}_2^2\bigr) \quad\text{and}\quad T^* \eqd \bigl(\norm{P_1\xi^*}_2^2, \norm{P_2\xi^*}_2^2, \norm{P_3\xi^*}_2^2\bigr) =: \tilde{T},
\end{align*}
where the latter holds by~\eqref{eq:spherical-sym-dirichlet} in the proof of  Proposition~\ref{prop:spherical-sym}\textit{(b)}. For $j \in \{1,2\}$, we have
\begin{align}
\E|T_j - \tilde{T}_j| &= \E\bigl|\norm{P_j\xi}_2^2 - \norm{P_j\xi^*}_2^2\bigr| = \E\bigl|(\xi + \xi^*)^\top P_j(\xi - \xi^*)\bigr| \notag \\
&\leq \E\bigl(\norm{P_j(\xi - \xi^*)}_2^2\bigr) + 2\,\E\bigl|(\xi - \xi^*)^\top P_j^\top P_j\xi^*\bigr| \notag \\
\label{eq:T-eps-1}
&\leq \E\bigl(\norm{P_j(\xi - \xi^*)}_2^2\bigr) + 2\,\E(\norm{P_j\xi^*}_2^2)^{1/2} \cdot \E\bigl(\norm{P_j(\xi - \xi^*)}_2^2\bigr)^{1/2}.
\end{align}
Since $T^* \sim \mathrm{Dirichlet}\bigl(\frac{\tilde{p}_0}{2},\frac{p - p_0}{2},\frac{n - p}{2}\bigr)$ (with $T^* = (0,T_2^*,T_3^*)$ and $(T_2^*,T_3^*) \sim \mathrm{Beta}\bigl(\frac{p - p_0}{2},\frac{n - p}{2}\bigr)$ when $\tilde{p}_0 = 0$), we have $\E(\norm{P_j\xi^*}_2^2) = \E(T_j^*) = \tr(P_j)/\tilde{n}$. Moreover, $(\xi,\xi^*)$ is independent of $X$ and $P_1 + P_2 = P - P'$, so
\begin{align}
\label{eq:T-eps-2}
\E\bigl(\norm{P_1(\xi - \xi^*)}_2^2 + \norm{P_2(\xi - \xi^*)}_2^2 \bigr) &= \E\bigl\{(\xi - \xi^*)^\top(P_1 + P_2)(\xi - \xi^*)\bigr\} \\
&\leq \norm{\E(P - P')}_{\mathrm{op}}\,\E(\norm{\xi - \xi^*}_2^2) \leq \frac{C_X\tilde{p}}{\tilde{n}}\W_2^2(\xi,\xi^*), \notag
\end{align}
where the final inequality holds by the first assumption in~\eqref{eq:exp-proj-eval}. Since $\sum_{j=1}^3 T_j = \sum_{j=1}^3 \tilde{T}_j = 1$, combining~\eqref{eq:T-eps-1} and~\eqref{eq:T-eps-2} yields
\begin{align}
\W_1(T,T^*) &\leq \E\norm{T - \tilde{T}}_2 = \E\bigl[\bigl\{(T_1 - \tilde{T}_1)^2 + (T_2 - \tilde{T}_2)^2 + (T_1 - \tilde{T}_1 + T_2 - \tilde{T}_2)^2\bigr\}^{1/2}\bigr] \notag \\
&\leq 2^{1/2}\,\E(|T_1 - \tilde{T}_1| + |T_2 - \tilde{T}_2|) \notag \\
&\leq 2^{1/2} \biggl[\E\bigl(\norm{P_1(\xi - \xi^*)}_2^2\bigr) + 2\Bigl(\frac{\tilde{p}_0}{\tilde{n}}\Bigr)^{1/2} \E\bigl(\norm{P_1(\xi - \xi^*)}_2^2\bigr)^{1/2} \notag \\
&\hspace{1.5cm}+ \E\bigl(\norm{P_2(\xi - \xi^*)}_2^2\bigr) + 2\Bigl(\frac{p - p_0}{\tilde{n}}\Bigr)^{1/2} \E\bigl(\norm{P_2(\xi - \xi^*)}_2^2\bigr)^{1/2}\biggr] \notag \\
\label{eq:W2-bd-epsilon}
&\leq \frac{2^{1/2}}{\tilde{n}}\bigl\{C_X\tilde{p}\W_2^2(\xi,\xi^*) + 2C_X^{1/2}\tilde{p}\W_2(\xi,\xi^*)\bigr\},
\end{align}
where the final bound follows from the Cauchy--Schwarz inequality.

Next, consider an optimal coupling of $(P_1,P_2)$ with a pair of projection matrices $(P_1^*,P_2^*)$ that attains $\W_2\bigl((P_1,P_2),(P_1^*,P_2^*)\bigr)$ and is independent of $\xi$. By the definition of $(P_1^*,P_2^*) = (Q_1^*{Q_1^*}^\top,Q_2^*{Q_2^*}^\top)$ in terms of $Q^* = (Q_1^* \; \; Q_2^*) \sim \Unif(\mathcal{V}_{\tilde{p}})$, it follows as in~\eqref{eq:ortho-conjugation} that for any fixed $Q_0 \in \mathcal{O}'$,
\begin{align*}
(P',P_1^*,P_2^*) = (P',Q_1^*{Q_1^*}^\top,Q_2^*{Q_2^*}^\top) &\eqd \bigl(Q_0 P'Q_0^\top, (Q_0 Q_1^*)(Q_0 Q_1^*)^\top,(Q_0 Q_2^*)(Q_0 Q_2^*)^\top\bigr) \\
&= (Q_0 P'Q_0^\top, Q_0 P_1^*Q_0^\top,\,Q_0 P_2^*Q_0^\top).
\end{align*}
Writing $P_3^* := I_n - (P' + P_1^* + P_2^*)$, we have $(P_1^*,P_2^*,P_3^*) \eqd (Q_0 P_1^*Q_0^\top,\,Q_0 P_2^*Q_0^\top,\,Q_0 P_3^*Q_0^\top)$, so
\begin{align*}
T &= \bigl(\norm{P_1\xi}_2^2, \norm{P_2\xi}_2^2, \norm{P_3\xi}_2^2\bigr) \quad\text{and}\quad T^* \eqd \bigl(\norm{P_1^*\xi}_2^2, \norm{P_2^*\xi}_2^2, \norm{P_3^*\xi}_2^2\bigr) =: \breve{T},
\end{align*}
where the distributional equality follows from the proof of Proposition~\ref{prop:spherical-sym}\textit{(b)}, specifically the paragraph after~\eqref{eq:ortho-conjugation}. For $j \in \{1,2\}$, we deduce similarly to~\eqref{eq:T-eps-1} that
\begin{align}
\label{eq:T-proj-1}
\E|T_j - \breve{T}_j| \leq \E\bigl(\norm{(P_j - P_j^*)\xi}_2^2\bigr) + 2\,\E(\norm{P_j^*\xi}_2^2)^{1/2} \cdot \E\bigl(\norm{(P_j - P_j^*)\xi}_2^2\bigr)^{1/2}.
\end{align}
Similarly to the line after~\eqref{eq:T-eps-1}, we have $\E(\norm{P_j^*\xi}_2^2) = \E(\breve{T}_j) = \tr(P_j)/\tilde{n}$. Also, $\{(P_j,P_j^*)\}_{j=1}^2$ is independent of $\xi$, so
\begin{align}
\E\bigl(\norm{(P_j - P_j^*)\xi}_2^2\bigr) &= \E\tr\bigl\{(P_j - P_j^*)\xi\xi^\top(P_j - P_j^*)^\top\bigr\} \notag \\
&= \E\tr\bigl\{(P_j - P_j^*)\,\E(\xi\xi^\top)(P_j - P_j^*)^\top\bigr\} \notag \\
\label{eq:T-proj-2}
&\leq \norm{\E(\xi\xi^\top)}_{\mathrm{op}}\,\E(\norm{P_j - P_j^*}_{\mathrm{F}}^2) \leq \frac{C_\varepsilon}{\tilde{n}}\E(\norm{P_j - P_j^*}_{\mathrm{F}}^2),
\end{align}
where the penultimate inequality follows from the fact that $\tr(A^\top BA) \leq \tr(A^\top A)\norm{B}_{\mathrm{op}} = \norm{A}_{\mathrm{F}}^2\norm{B}_{\mathrm{op}}$ for $A,B \in \R^{n \times n}$ with $B$ being non-negative definite, while the final inequality holds by the second assumption in~\eqref{eq:exp-proj-eval}. Arguing as in~\eqref{eq:W2-bd-epsilon} and applying the Cauchy--Schwarz inequality, we deduce from~\eqref{eq:T-proj-1} and~\eqref{eq:T-proj-2} that
\begin{align*}
\W_1(T,T^*) &\leq \E\norm{T - \breve{T}}_2 \leq 2^{1/2}\,\E(|T_1 - \breve{T}_1| + |T_2 - \breve{T}_2|) \\
&\leq 2^{1/2} \biggl[\E\bigl(\norm{(P_1 - P_1^*)\xi}_2^2\bigr) + 2\Bigl(\frac{\tilde{p}_0}{\tilde{n}}\Bigr)^{1/2} \E\bigl(\norm{(P_1 - P_1^*)\xi}_2^2\bigr)^{1/2} \\
&\hspace{1.5cm}+ \E\bigl(\norm{(P_2 - P_2^*)\xi}_2^2\bigl) + 2\Bigl(\frac{p - p_0}{\tilde{n}}\Bigr)^{1/2}\E\bigl(\norm{(P_2 - P_2^*)\xi}_2^2\bigr)^{1/2}\biggr] \\
&\leq \frac{2^{1/2}}{\tilde{n}} \bigl\{C_\varepsilon\W_2^2\bigl((P_1,P_2),(P_1^*,P_2^*)\bigr) + 2(C_\varepsilon\tilde{p})^{1/2}\W_2\bigl((P_1,P_2),(P_1^*,P_2^*)\bigr)\bigr\} \\
&\leq \frac{2^{3/2}}{\tilde{n}} \bigl\{C_\varepsilon\inf_{Q \in \mathcal{Q}} \W_2^2(Q,Q^*) + 2^{1/2}(C_\varepsilon\tilde{p})^{1/2} \inf_{Q \in \mathcal{Q}} \W_2(Q,Q^*)\bigr\},
\end{align*}
where the final inequality follows from~\eqref{eq:wasserstein-metrics}. Combining this with~\eqref{eq:W2-bd-epsilon} yields the result.
\end{proof}

\begin{remark*}
Similarly to the remark after Theorem~\ref{thm:doubly-robust-A}, we always have $1/\tilde{n} \leq \norm{\E(\xi\xi^\top)}_{\mathrm{op}} \leq 1$ and $\tilde{p}/\tilde{n} \leq \norm{\E(P - P')}_{\mathrm{op}} \leq 1$, so we may assume without loss of generality that $C_X \in [1,\tilde{n}/\tilde{p}]$ and $C_\varepsilon \in [1,\tilde{n}]$ in Theorem~\ref{thm:doubly-robust}. Indeed, we can write $I_n - P' = Q'Q'^\top$ for some $Q' \in \mathcal{O}_{n \times \tilde{n}}$. Then $\eta := Q'^\top\xi$ takes values in $\mathcal{S}^{\tilde{n} - 1}$ and we can write $\E(\xi\xi^\top) = Q'DQ'^\top$ for some diagonal $D \in \R^{\tilde{n} \times \tilde{n}}$, so
\begin{align}
\label{eq:Cepsilon}
1 = \E(\norm{\xi\xi^\top}_{\mathrm{op}}) &\geq \norm{\E(\xi\xi^\top)}_{\mathrm{op}} = \|Q'DQ'^\top\|_{\mathrm{op}} = \|D\|_{\mathrm{op}} \notag \\
&= \norm{Q'^\top\E(\xi\xi^\top)Q'}_{\mathrm{op}} = \norm{\E(\eta\eta^\top)}_{\mathrm{op}} \geq \frac{\tr\E(\eta\eta^\top)}{\tilde{n}} = \frac{\E(\norm{\eta}_2^2)}{\tilde{n}} = \frac{1}{\tilde{n}}. 
\end{align}
In particular, if $\xi \eqd \xi^*$, then $\xi \eqd Q'U$ for $U \sim \Unif(\mathcal{S}^{\tilde{n} - 1})$. Then $\eta = Q'^\top\xi \eqd U$, so $\E(\eta\eta^\top) = I_{\tilde{n}}/\tilde{n}$ and hence equality holds in the second line of~\eqref{eq:Cepsilon}. Thus, we may take $C_\varepsilon = 1$ here.

On the other hand, we always have
\begin{align}
1 = \E(\norm{P - P'}_{\mathrm{op}}) &\geq \norm{\E(P - P')}_{\mathrm{op}} = \bigl\|Q'^\top\E(P - P')Q'\bigr\|_{\mathrm{op}} \notag \\ 
\label{eq:CX}
&\geq \frac{\tr\bigl\{Q'^\top\E(P - P')Q'\bigr\}}{\tilde{n}} = \frac{\tr\E\{(P - P')(I_n - P')\}}{\tilde{n}} = \frac{\tr\E(P - P')}{\tilde{n}} = \frac{\tilde{p}}{\tilde{n}}.
\end{align}
In the rotationally invariant setting where $P - P' \eqd Q^*{Q^*}^\top = P_1^* + P_2^*$, then since $Q^* \eqd Q'R$ with $R \sim \Unif(\mathcal{O}_{\tilde{n} \times \tilde{p}})$, we have
\[
\E(P - P') = \E(Q^*{Q^*}^\top) = Q'\E(RR^\top)Q'^\top = Q'\Bigl(\frac{\tilde{p}I_{\tilde{n}}}{\tilde{n}}\Bigr)Q'^\top = \frac{\tilde{p}(I_n - P')}{\tilde{n}},
\]
so equality holds in the second line of~\eqref{eq:CX}. Hence, we may take $C_X = 1$ in this case.
\end{remark*}

\subsection{Proofs of lower bounds on the Kolmogorov distance in Section~\ref{sec:upper-bds}}

The proof of Proposition~\ref{prop:dK-W1-lb} is divided into two parts: we begin by proving part~\textit{(a)} before presenting a more sophisticated construction for \textit{(b)} demonstrating the optimality of the exponents in Theorem~\ref{thm:dK-W1} under the additional requirement that $(X_1,\varepsilon_1),\dotsc,(X_n,\varepsilon_n)$ be exchangeable.

\begin{proof}[Proof of Proposition~\ref{prop:dK-W1-lb}\textit{(a)}]
Let $X := (e_1\,\cdots\,e_p)$ and $\varepsilon^* = (\varepsilon_1^*,\dotsc,\varepsilon_n^*) \sim \Unif(\mathcal{S}^{n-1})$. Then $X$ has orthonormal columns with $P_0 = \sum_{j=1}^{p_0} e_j e_j^\top$ and $P = \sum_{j=1}^p e_j e_j^\top$, so
\[
T_2^* = \frac{\norm{(P - P_0)\varepsilon^*}_2^2}{\norm{\varepsilon^*}_2^2} = \sum_{j=p_0+1}^p (\varepsilon_j^*)^2
\quad\text{and}\quad T_3^* = \frac{\norm{(I_n - P)\varepsilon^*}_2^2}{\norm{\varepsilon^*}_2^2} = \sum_{j=p+1}^n (\varepsilon_j^*)^2.
\]
Moreover, $V^* := T_2^*/(T_2^* + T_3^*) \sim \mathrm{Beta}(\frac{p - p_0}{2}, \frac{n - p}{2})$. First, we consider the case $n - p \leq p - p_0$ and let
\[
v_0 :=
\begin{cases}
1 \;\;&\text{if }n - p \in \{1,2\} \\
(p - p_0 - 2)/(n - p_0 - 4) \geq 1/2 \;\;&\text{otherwise},
\end{cases}
\]
so that the density $g^*$ of $V^*$ satisfies $\lim_{v \nearrow v_0} g^*(v) = \sup_{v \in (0,1)} g^*(v)$. Given $\delta \in (0,v_0)$, let
\begin{align}
V &:= V^* + (v_0 - V^*)\Ind_{\{v_0 - \frac{\delta}{1 - T_1^*} < V^* \leq v_0\}}, \notag \\
\label{eq:lbd-epsilon}
\varepsilon &:= \sum_{j=1}^{p_0} \varepsilon_j^*e_j + \sqrt{\frac{V}{V^*}} \sum_{j=p_0+1}^p \varepsilon_j^*e_j + \sqrt{\frac{1 - V}{1 - V^*}} \sum_{j=p+1}^n \varepsilon_j^*e_j.
\end{align}
Then $V \geq V^*$, so $\mathsf{F} = \frac{n - p}{p - p_0} \cdot V/(1 - V) \geq \frac{n - p}{p - p_0} \cdot V^*/(1 - V^*) = \mathsf{F}^*$ and hence
\[
\Pr(\mathsf{F} > q) \geq \Pr(\mathsf{F}^* > q)
\]
for all $q > 0$.  Moreover,
\begin{align*}
\norm{\varepsilon}_2^2 &= T_1^* + V\frac{T_2^*}{V^*} + (1 - V)\frac{T_3^*}{1 - V^*} = T_1^* + \bigl(V + (1 - V)\bigr)(T_2^* + T_3^*) = 1, \\[3pt]
T_1 &= \frac{\norm{P_0\varepsilon}_2^2}{\norm{\varepsilon}_2^2} = \sum_{j=1}^{p_0} \varepsilon_j^2 = T_1^*, \qquad T_2 = \frac{\norm{(P - P_0)\varepsilon}_2^2}{\norm{\varepsilon}_2^2} = \sum_{j=p_0+1}^p \varepsilon_j^2 = V\frac{T_2^*}{V^*}
\end{align*}
and $V = T_2/(T_2 + T_3)$. Since $T_2^*/V^* = 1 - T_1^*$, we have
\begin{align}
\W_1(T,T^*) \leq \E\norm{T - T^*}_2 = 2^{1/2}\,\E|T_2 - T_2^*| &= 2^{1/2}\,\E\biggl(\frac{T_2^*(v_0 - V^*)}{V^*} \Ind_{\{v_0 - \frac{\delta}{1 - T_1^*} < V^* \leq v_0\}}\biggr) \notag \\
\label{eq:lbd-W1}
&\leq 2^{1/2}\delta\,\Pr\Bigl(v_0 - \frac{\delta}{1 - T_1^*} < V^* \leq v_0\Bigr)
\end{align}
and
\begin{align}   
\dK(\mathsf{F},\mathsf{F}^*) = \sup_{x \in \R}\:\Bigl|\Pr\Bigl(\frac{T_2}{T_3} \geq x\Bigr) - \Pr\Bigl(\frac{T_2^*}{T_3^*} \geq x\Bigr)\Bigr| &= \sup_{v \in (0,1)}\,\{\Pr(V \geq v) - \Pr(V^* \geq v)\} \notag \\
&\geq \Pr(V \geq v_0) - \Pr(V^* \geq v_0) \notag \\
&= \Pr\Bigl(v_0 - \frac{\delta}{1 - T_1^*} < V^* \leq v_0\Bigr).
\end{align}
Now let $h_0 := v_0$ if $p - p_0 \in \{1,2\}$ and $h_0 := v_0(1 - 2^{-2/(p - p_0 - 2)})$ otherwise; now fix $h \in (0,h_0)$. If $n - p = 1$, then $v_0 = 1$ and
\begin{equation}
\label{Eq:BetaLowerBound1}
\Pr(v_0 - h < V^* \leq v_0) = \int_{1 - h}^1 \frac{t^{\frac{p - p_0}{2} - 1}(1 - t)^{-1/2}}{B}\,dt \geq \frac{2h^{1/2}}{B} (1 - h)^{(\frac{p - p_0}{2} - 1)_+} > \frac{h^{1/2}}{B},
\end{equation}
where $B = \mathrm{B}(\frac{p - p_0}{2}, \frac{n - p}{2})$. On the other hand, if $n - p \geq 2$, then 
\begin{align}
\label{Eq:BetaLowerBound2}
\Pr(v_0 - h < V^* \leq v_0) = \int_{v_0 - h}^{v_0} \frac{t^{\frac{p - p_0}{2} - 1}(1 - t)^{\frac{n - p}{2} - 1}}{B}\,dt &\geq \frac{h(v_0 - h)^{\frac{p - p_0}{2} - 1}(1 - v_0)^{\frac{n - p}{2} - 1}}{B} \notag \\
&> \frac{hv_0^{\frac{p - p_0}{2} - 1}(1 - v_0)^{\frac{n - p}{2} - 1}}{2B} = \frac{Mh}{2},
\end{align}
where $M = \mathrm{M}\bigl(\frac{p - p_0}{2},\frac{n - p}{2}\bigr) = \sup_{v \in (0,1)} g^*(v)$, as defined before the statement of Theorem~\ref{thm:dK-W1}. For $j \in \{1,2\}$ and when $p_0 \geq 1$, define $m_j$ to be the median of the $\mathrm{Beta}(\frac{p_0}{2},\frac{n - p_0 - j}{2})$ distribution. Moreover, in that case, $V^*$ is independent of $T_1^* \sim \mathrm{Beta}\bigl(\frac{p_0}{2}, \frac{n - p_0}{2}\bigr)$; see~\eqref{eq:dirichlet-indep} in the proof of Theorem~\ref{thm:dK-W1}. Thus, by~\eqref{Eq:BetaLowerBound1} and~\eqref{Eq:BetaLowerBound2}, if $m := m_1 \vee m_2$ and $\delta < (1 - m)h_0$, then for $p_0 \geq 1$,
\begin{align}
&\Pr\Bigl(v_0 - \frac{\delta}{1 - T_1^*} < V^* \leq v_0\Bigr) = \int_0^1 \Pr\Bigl(v_0 - \frac{\delta}{1 - t} < V^* \leq v_0\Bigr) \cdot \frac{t^{\frac{p_0}{2} - 1}(1 - t)^{\frac{n - p_0}{2} - 1}}{B_0}\,dt \notag \\
\label{eq:lbd-dK}
&\hspace{1cm} \geq 
\begin{cases}
\,\displaystyle \int_0^{m_1} \frac{\delta^{1/2}}{B(1 - t)^{1/2}} \cdot \frac{t^{\frac{p_0}{2} - 1}(1 - t)^{\frac{n - p_0}{2} - 1}}{B_0}\,dt = \frac{D_1}{2B} \cdot \delta^{1/2} 
&\text{if }n - p = 1 \\[12pt]
\,\displaystyle\int_0^{m_2} \frac{M\delta}{2(1 - t)} \cdot \frac{t^{\frac{p_0}{2} - 1}(1 - t)^{\frac{n - p_0}{2} - 1}}{B_0}\,dt = \frac{MB_2}{4B_0} \cdot \delta \quad &\text{if }n - p \geq 2,
\end{cases} \nonumber \\
&\hspace{1cm}=
\begin{cases}
\,\displaystyle \frac{C(n,p,p_0)^{3/2}}{2^{1/4} \cdot 3^{3/2}\pi} \cdot (\delta n)^{1/2} 
\quad &\text{if }n - p = 1 \\[10pt]
\dfrac{C(n,p,p_0)^2}{2^{9/2}} \cdot \delta n 
\quad &\text{if }n - p \geq 2,
\end{cases}
\end{align}
where $B_j = \mathrm{B}(\frac{p_0}{2},\frac{n - p_0 - j}{2})$ for $j \in \{0,1,2\}$ and $D_1 = B_1/B_0$ are as in Theorem~\ref{thm:dK-W1}.  Moreover, by~\eqref{Eq:BetaLowerBound1} and~\eqref{Eq:BetaLowerBound2}, the final bound in~\eqref{eq:lbd-dK} holds even when $p_0 = 0$. Therefore, for any $\eta > 0$, it follows from~\eqref{eq:lbd-W1}--\eqref{eq:lbd-dK} that whenever $0 < \delta < \min\{\eta/(2^{1/2}n),(1 - m)h_0\}$, we have $\W_1(nT,nT^*) \leq 2^{1/2}\delta n < \eta$ and
\begin{align*}
\dK(\mathsf{F},\mathsf{F}^*) &\geq 
\begin{cases}
\dfrac{\W_1(T,T^*)^{1/3}}{(2^{1/2}\delta)^{1/3}}\Pr\Bigl(v_0 - \dfrac{\delta}{1 - T_1^*} < V^* \leq v_0\Bigr)^{2/3} \;\;&\text{if }n - p = 1, \\[12pt]
\dfrac{\W_1(T,T^*)^{1/2}}{(2^{1/2}\delta)^{1/2}}\Pr\Bigl(v_0 - \dfrac{\delta}{1 - T_1^*} < V^* \leq v_0\Bigr)^{1/2} \;\;&\text{if }n - p \geq 2,
\end{cases} \\
&\geq
C(n,p,p_0) \cdot
\begin{cases}
2^{-1/3} \cdot 3^{-1} \cdot \pi^{-2/3} \W_1(nT,nT^*)^{1/3} \;\;&\text{if }n - p = 1 \\
2^{-5/2} \W_1(nT,nT^*)^{1/2} \;\;&\text{if }n - p \geq 2,
\end{cases}
\end{align*}
which yields the result when $n - p \leq p - p_0$. On the other hand, if $n - p > p - p_0$, then we can argue similarly with $\varepsilon$ in~\eqref{eq:lbd-epsilon} defined in terms of
\[
v_0 :=
\begin{cases}
0 \;\;&\text{if }p - p_0 \in \{1,2\} \\
(p - p_0 - 2)/(n - p_0 - 4) \leq 1/2 \;\;&\text{otherwise}
\end{cases}
\]
and $V := V^* + (v_0 - V^*)\Ind_{\{v_0 \leq V^* < v_0 + \frac{\delta}{1 - T_1^*}\}}$ instead.
\end{proof}

Our proof of Proposition~\ref{prop:dK-W1-lb}\textit{(b)} below requires some further notation and preliminaries. We write $\triangle_n := \{(u_1,\dotsc,u_n) \in [0,\infty)^n : \sum_{i=1}^n u_i = 1\}$ for the standard $(n - 1)$-dimensional simplex and $\mathrm{Perm}(n)$ for the set of all permutation matrices $\Pi \in \{0,1\}^{n \times n}$. Given $0 \leq p_0 < p < n$ and $u = (u_1,\ldots,u_n) \in (0,\infty)^n$, let
\[
r(u) := \frac{\sum_{i=p_0+1}^p u_i}{\sum_{i=p_0+1}^n u_i}.
\]
For $\Pi \in \mathrm{Perm}(n)$ and $c,h > 0$ such that $0 \leq c - h < c + h \leq 1$, define 
\begin{equation}
\label{eq:simplex-regions}
\triangle_{n,\Pi} := \{u \in \triangle_n \cap (0,\infty)^n : |r(\Pi u) - c| < h\}, \qquad \triangle_n^\circ := \bigcup_{\Pi \in \mathrm{Perm}(n)} \triangle_{n,\Pi}, \qquad \triangle_n' := \triangle_n \setminus \triangle_n^\circ.
\end{equation}
Then $\triangle_n$ and $\triangle_n'$ are closed subsets of $\bigl\{(x_1,\dotsc,x_n) \in \R^n : \sum_{i=1}^n x_i = 1\bigr\}$ that are permutation-invariant, i.e.~$\triangle_n = \{\Pi u : u \in \triangle_n\} =: \Pi(\triangle_n)$ for all $\Pi \in \mathrm{Perm}(n)$, and similarly for $\triangle_n'$. We now define a function $\mathcal{T}$ on $\triangle_n$ that projects outwards onto $\triangle_n'$ along rays emanating from $u^* := (1/n,\dotsc,1/n) \in \triangle_n$: let $\mathcal{T}(u^*) := u^*$ and
\begin{equation}
\label{eq:Gn-simplex}
\mathcal{T}(u) := u + \lambda^*(u) \cdot (u - u^*)
\end{equation}
for $u \in \triangle_n \setminus \{u^*\}$, where $\lambda^*(u) := \inf\{\lambda \geq 0 : u + \lambda(u - u^*) \notin \triangle_n^\circ\}$. See Figure~\ref{fig:simplex-proj} below for an illustration of this geometric configuration.

\begin{lemma}
\label{lem:simplex-proj}
Suppose that $(p - p_0)/(n - p_0) \notin (c - h, c + h)$. Then $\mathcal{T}$ is a well-defined permutation-equivariant projection from $\triangle_n$ to $\triangle_n'$. In other words, for $u \in \triangle_n$, we have $\mathcal{T}(u) \in \triangle_n'$ and $\mathcal{T}(\Pi u) = \Pi \mathcal{T}(u)$ for all $\Pi \in \mathrm{Perm}(n)$, with $\lambda^*(u) < \infty$ if $u \neq u^*$ and $\mathcal{T}(u) = u$ for all $u \in \triangle_n'$. Moreover, $\mathcal{T}$ is Borel measurable on $\triangle_n$ with
\begin{equation}
\label{eq:Gn-proj-dist}
\sup_{u \in \triangle_n} \norm{\mathcal{T}(u) - u}_1 \leq 2\binom{n}{p}\binom{p}{p_0}Kh,
\end{equation}
where we can take
\begin{equation}
\label{eq:simplex-K}
K := \Bigl(\frac{n}{n - p_0}\Bigr)^2 \cdot \frac{2}{\inf_{r \in (c-h,c+h)} |r - \frac{p - p_0}{n - p_0}|}.
\end{equation}
In addition, if $u \in \triangle_n$ and $\Pi \in \mathrm{Perm}(n)$ are such that $r(\Pi u) > (p - p_0)/(n - p_0)$, then $r\bigl(\Pi\mathcal{T}(u)\bigr) \geq r(\Pi u)$.
\end{lemma}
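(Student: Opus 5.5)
We analyse $\mathcal{T}$ along each ray $\lambda \mapsto u_\lambda := u + \lambda(u - u^*)$, $\lambda \geq 0$, which is affine in $\lambda$ and stays in the hyperplane $\{\sum_i x_i = 1\}$.

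\textbf{Step 1: $u^*$ lies in $\triangle_n'$.} For every $\Pi$ we have $r(\Pi u^*) = (p-p_0)/(n-p_0)$. By hypothesis this value is not in $(c-h,c+h)$, so $u^* \in \triangle_n'$ and $\mathcal{T}(u^*) = u^*$ is consistent.

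\textbf{Step 2: $\mathcal{T}$ is well defined with values in $\triangle_n'$.} Fix $u \neq u^*$. Since $u - u^* \neq 0$ has coordinates summing to zero, some coordinate of $u_\lambda$ becomes negative for large $\lambda$, so $u_\lambda$ eventually leaves $\triangle_n \supseteq \triangle_n^\circ$. Hence $\lambda^*(u) < \infty$. Each $\triangle_{n,\Pi}$ is relatively open in $\triangle_n \cap (0,\infty)^n$, because $r \circ \Pi$ is continuous there. So the set of $\lambda$ with $u_\lambda \in \triangle_n^\circ$ is relatively open, and the infimum of its complement is attained; thus $u_{\lambda^*} \notin \triangle_n^\circ$. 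We also need $u_{\lambda^*} \in \triangle_n$. Any exit from $\triangle_n$ must pass through a boundary point with some zero coordinate, which is not in $\triangle_n^\circ$ (that set requires positive coordinates) but is in $\triangle_n$. So the first non-$\triangle_n^\circ$ point lies in $\triangle_n$, giving $u_{\lambda^*} \in \triangle_n'$. If $u \in \triangle_n'$ then $\lambda^* = 0$, so $\mathcal{T}(u) = u$.

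\textbf{Step 3: equivariance and measurability.} Because $\Pi u^* = u^*$ and $\triangle_n^\circ$ is permutation invariant, $\lambda^*(\Pi u) = \lambda^*(u)$, and therefore $\mathcal{T}(\Pi u) = \Pi\mathcal{T}(u)$. For measurability, write $\{\lambda^*(u) > t\} = \bigcup_m \{u_\lambda \in \triangle_n^\circ$ for all rational $\lambda \in [0, t+1/m]\}$. This uses the openness from Step 2 to pass from all real $\lambda$ to rational $\lambda$. Each such set is a countable intersection of Borel sets, so $\lambda^*$ is Borel and hence so is $\mathcal{T}$.

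\textbf{Step 4: the displacement bound \eqref{eq:Gn-proj-dist}, the main obstacle.} We have $\norm{\mathcal{T}(u) - u}_1 = \lambda^*\norm{u - u^*}_1$, so we must bound the total "time" the ray spends inside $\triangle_n^\circ$.

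For fixed $\Pi$, write $\phi(\lambda) := r(\Pi u_\lambda) = N(\lambda)/D(\lambda)$, with $N$ and $D$ affine in $\lambda$. The function $\phi$ is monotone (a Möbius map in $\lambda$), so $\{\lambda : |\phi(\lambda) - c| < h\}$ is a single interval. Its length in the $\ell_1$ displacement is at most $2h/\inf|\phi'|$, with the derivative measured per unit $\ell_1$ step. Computing, $\phi'(\lambda)\,D(\lambda)^2 = N'D - ND' = N(0)D'(0)\cdot(\ldots)$, which equals $D(0)D(\lambda)\bigl(\phi(0) - \phi(\infty)\bigr)\ldots$. I would simplify this using the fact that the derivative is proportional to $|\phi(\lambda) - \phi_{u^*}|$, where $\phi_{u^*} := (p-p_0)/(n-p_0)$ is the value at the ray's origin $\lambda = -1$. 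Concretely,
\[
\phi(\lambda) - \phi_{u^*} = \frac{D(-1)\,(\phi(0) - \phi_{u^*})}{D(\lambda)} \cdot (\text{const}).
\]
Combined with $D(-1) = (n-p_0)/n$ and $D \leq 1$, this gives a lower bound on $|d\phi/ds|$, with $s$ the $\ell_1$ arclength, of order $\bigl((n-p_0)/n\bigr)^2 \cdot |\phi - \phi_{u^*}|/2$. On the region $|\phi - c| < h$ this is at least $\inf_{r\in(c-h,c+h)}|r - \phi_{u^*}|$. The crossing length for each $\Pi$ is therefore at most $Kh$, with $K$ as in \eqref{eq:simplex-K}; getting the exact constants is the delicate bookkeeping.

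Because $r(\Pi u)$ depends only on which coordinates go to the blocks $[p_0]$, $(p_0:p]$ and $(p:n]$, only $\binom{n}{p}\binom{p}{p_0}$ distinct functions arise. The set of $\lambda \in [0,\lambda^*)$ is covered by the union of these intervals, since the ray is in $\triangle_n^\circ$ throughout. It follows that $\lambda^*\norm{u - u^*}_1 \leq \binom{n}{p}\binom{p}{p_0}Kh$, up to the stated factor $2$.

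\textbf{Step 5: monotonicity of $r$.} By the formula in Step 4, $\phi(\lambda) - \phi_{u^*}$ has the form $a/D(\lambda)$ with $a$ of fixed sign, and $D(\lambda) = D(-1) + (\lambda+1)\bigl(D(0) - D(-1)\bigr)$. I will check that $|\phi(\lambda) - \phi_{u^*}|$ is nondecreasing in $\lambda$ along the ray wherever $D > 0$; if needed, I would verify this directly from the linear-fractional form. Then $r(\Pi u) > \phi_{u^*}$ implies $r(\Pi\mathcal{T}(u)) = \phi(\lambda^*) \geq \phi(0) = r(\Pi u)$.
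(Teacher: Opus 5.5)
Your plan has the same architecture as the paper's proof. You use rays from $u^*$, a monotone linear-fractional $\phi=N/D$ along the ray for each of the $\binom{n}{p}\binom{p}{p_0}$ classes of $\Pi$, a lower bound on $|\phi'|$ on $\triangle_{n,\Pi}$ to bound each crossing, a covering of $[0,\lambda^*)$ by the crossings, and monotonicity of $r$ from the sign of $\phi'$. The gap is that the one computation carrying Steps 4 and 5 is never done, and the formula you write for it is wrong.

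$\phi(\lambda)-\phi_{u^*}$ is not of the form $\mathrm{const}/D(\lambda)$, for three reasons:
\begin{itemize}
\item such an expression does not vanish at $\lambda=-1$ (where $u_\lambda=u^*$) unless it vanishes identically;
\item its derivative is proportional to $b-t(\Pi u)$, with $b:=(n-p_0)/n=D(-1)$, so it gives no lower bound when $t(\Pi u)=b$;
\item it would make $|\phi-\phi_{u^*}|$ decreasing along the ray whenever $t(\Pi u)>b$, which is the opposite of what Step 5 needs.
\end{itemize}
The missing observation is that for affine $N,D$ the quantity $N'D-ND'=bN(0)-aD(0)$, with $a:=(p-p_0)/n$, is constant in $\lambda$. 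Hence
\[
\phi'(\lambda)=\frac{b\,t(\Pi u)\bigl(\phi(0)-\phi_{u^*}\bigr)}{D(\lambda)^2},
\qquad
\phi(\lambda)-\phi_{u^*}=\frac{(1+\lambda)\,t(\Pi u)\bigl(\phi(0)-\phi_{u^*}\bigr)}{D(\lambda)},
\]
which is the paper's formula for $\theta'$ up to the arclength normalisation. Its fixed sign gives Step 5 at once.

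For Step 4 there are two ways to finish.
\begin{itemize}
\item The paper's route: bound $t(\Pi u)/D(\lambda)^2\ge b$ on the part of the ray in $\triangle_n$, splitting into the cases $t(\Pi u)\ge b$ and $t(\Pi u)<b$. Then use $|\phi(0)-\phi_{u^*}|\ge\inf_{r\in(c-h,c+h)}|r-\phi_{u^*}|$, re-basing the ray at the entry point of a crossing.
\item Your current-point idea, made rigorous: in $\ell_1$ arclength $\sigma$ it reads $d\phi/d\sigma=b(\phi-\phi_{u^*})/\bigl(\norm{u_\lambda-u^*}_1D(\lambda)\bigr)$. Since $\norm{u_\lambda-u^*}_1\le 2$ and $D\le1$, this gives $|d\phi/d\sigma|\ge\tfrac{b}{2}\inf_{r\in(c-h,c+h)}|r-\phi_{u^*}|$ throughout the crossing, with no case split.
\end{itemize}
Either way each crossing has $\ell_1$-length at most $2Kh$ (not $Kh$), which yields \eqref{eq:Gn-proj-dist} exactly.

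Separately, the measurability argument in Step 3 fails as written. An open subset of $[0,\infty)$ containing every rational in $[0,t+1/m]$ need not contain the whole interval. Nothing prevents two crossing intervals $\{\lambda:\Pi u_\lambda\in\triangle_{n,\Pi}\}$ from abutting at an irrational $\lambda$, where the ray leaves $\triangle_n^\circ$ only for an instant. So your set can contain points with $\lambda^*(u)\le t$, and openness does not let you pass from rational to real $\lambda$.

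Use instead the paper's lower-semicontinuity argument. The set $\triangle_n^\circ$ is open in the hyperplane $\{\sum_i x_i=1\}$. If the compact segment $\{u_\lambda:\lambda\in[0,t']\}$ lies in it, then so do the corresponding segments for all nearby starting points. Thus $\{\lambda^*>t\}$ is relatively open, $\lambda^*$ is lower semicontinuous on $\triangle_n\setminus\{u^*\}$, and $\mathcal{T}$ is Borel.
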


\begin{proof}[Proof of Proposition~\ref{prop:dK-W1-lb}\textit{(b)}]
Let $Y^* = (Y_1^*,\dotsc,Y_n^*) \sim \mathrm{Dirichlet}(1/2,\dotsc,1/2)$ and $Y := \mathcal{T}(Y^*)$, where $\mathcal{T}$ is defined as in~\eqref{eq:Gn-simplex} in terms of $c,h \in (0,1)$ that we will specify later. Then by the permutation equivariance of $\mathcal{T}$ established in Lemma~\ref{lem:simplex-proj} as well as the exchangeability of $Y^*$, we have
\[
\Pi Y = \Pi\mathcal{T}(Y^*) = \mathcal{T}(\Pi Y^*) \eqd \mathcal{T}(Y^*) = Y
\]
for every $\Pi \in \mathrm{Perm}(n)$, so $Y$ is also exchangeable. Now let $\eta_1,\dotsc,\eta_p,\xi_1,\dotsc,\xi_n$ be independent Rademacher random variables that are independent of $\Pi^* \sim \Unif\bigl(\mathrm{Perm}(n)\bigr)$ and $Y^*$. Define
\[
X := (\eta_1\Pi^* e_1 \;\;\cdots\;\; \eta_p\Pi^* e_p), \quad \varepsilon := (\xi_1\sqrt{Y_1},\dotsc,\xi_n\sqrt{Y_n}), \quad \varepsilon^* := (\xi_1\sqrt{Y_1^*},\dotsc,\xi_n\sqrt{Y_n^*}),
\]
so that $X$ is an $n \times p$ matrix with exchangeable zero-mean rows, which is independent of random vectors $\varepsilon$ and $\varepsilon^*$ with exchangeable zero-mean components. Since $Y^* \eqd (Z_1^2,\dotsc,Z_n^2)/\norm{Z}_2^2$ with $Z = (Z_1,\dotsc,Z_n) \sim N_n(0,I_n)$, we have $\varepsilon^* \sim \Unif(\mathcal{S}^{n-1})$. Moreover, $X$ has orthonormal columns with $P_0 = \sum_{j=1}^{p_0} (\Pi^*e_j)(\Pi^*e_j)^\top$ and $P = \sum_{j=1}^p (\Pi^*e_j)(\Pi^*e_j)^\top$, so
\[
T_2^* = \frac{\norm{(P - P_0)\varepsilon^*}_2^2}{\norm{\varepsilon^*}_2^2} = \sum_{j=p_0+1}^p Y_{J_j}^* = \sum_{j=p_0+1}^p ({\Pi^*}^\top Y^*)_j
\]
and
\[
T_3^* = \frac{\norm{(I_n - P)\varepsilon^*}_2^2}{\norm{\varepsilon^*}_2^2} = \sum_{j=p+1}^n Y_{J_j}^* = \sum_{j=p+1}^n ({\Pi^*}^\top Y^*)_j,
\]
where $J_1,\dotsc,J_n$ constitute a uniform random permutation of $[n]$. Similarly,
\[
T_2 = \frac{\norm{(P - P_0)\varepsilon}_2^2}{\norm{\varepsilon}_2^2} = \sum_{j=p_0+1}^p Y_{J_j}
\quad\text{and}\quad T_3 = \frac{\norm{(I_n - P)\varepsilon}_2^2}{\norm{\varepsilon}_2^2} = \sum_{j=p+1}^n Y_{J_j},
\]
where $J_1,\dotsc,J_n$ are the same indices as above. We have $V^* := T_2^*/(T_2^* + T_3^*) = r({\Pi^*}^\top Y^*) \sim \mathrm{Beta}(\frac{p - p_0}{2},\frac{n - p}{2})$ and
\[
V := \frac{T_2}{T_2 + T_3} = \frac{\sum_{j=p_0+1}^p Y_{J_j}}{\sum_{j=p_0+1}^n Y_{J_j}} = r({\Pi^*}^\top Y).
\]
By the final assertion of Lemma~\ref{lem:simplex-proj}, if $V^* > (p - p_0)/(n - p_0)$, then $V \geq V^*$, so
\[
\Pr(V > t) \geq \Pr(V^* > t)
\]
for $t > (p - p_0)/(n - p_0)$. Since $\mathsf{F} = \frac{n - p}{p - p_0} \cdot V/(1 - V)$ and similarly $\mathsf{F}^* = \frac{n - p}{p - p_0} \cdot V^*/(1 - V^*)$, it follows for $q > 1$ that
\begin{equation}
\label{eq:anticonservative}
\Pr(\mathsf{F} > q) = \Pr\Bigl(V > \frac{(p - p_0)q}{n - p + (p - p_0)q}\Bigr) \geq \Pr\Bigl(V^* > \frac{(p - p_0)q}{n - p + (p - p_0)q}\Bigr) = \Pr(\mathsf{F}^* > q).
\end{equation}
Next, by the definition~\eqref{eq:simplex-regions} of $\triangle_{n,\Pi}$ for each $\Pi \in \mathrm{Perm}(n)$, the fact that $\Pi Y^* \eqd Y^*$ implies that
\begin{equation}
\label{eq:Ystar-exch}
\Pr(Y^* \in \triangle_{n,\Pi}) = \Pr\bigl(|r(\Pi Y^*) - c| < h\bigr) = \Pr(|V^* - c| < h) =: \eta.
\end{equation}
There exists $\mathrm{Perm}(n)' \subseteq \mathrm{Perm}(n)$ of cardinality $\binom{n}{p}\binom{p}{p_0}$ such that $\triangle_n^\circ = \bigcup_{\Pi \in \mathrm{Perm}(n)'} \triangle_{n,\Pi}$, and by Lemma~\ref{lem:simplex-proj}, $\mathcal{T}(u) \neq u$ if and only if $u \in \triangle_n^\circ$. Therefore, by a union bound,
\begin{equation}
\label{eq:Yfixed}
\Pr(Y \neq Y^*) = \Pr(Y^* \in \triangle_n^\circ) \leq \binom{n}{p}\binom{p}{p_0} \cdot \eta.
\end{equation}
If $Y^*$ lies in $\triangle_n' \cap (0,\infty)^n$, then $Y = \mathcal{T}(Y^*) = Y^*$ and hence $|r(\Pi Y) - c| \geq h$ for all $\Pi \in \mathrm{Perm}(n)$, so $|V - c| \geq h$. Moreover, by~\eqref{eq:Gn-proj-dist}, $\max_{i \in [n]} |Y_i - Y_i^*| \leq \norm{\mathcal{T}(Y^*) - Y^*}_1 \leq ah$ with $a := 2\binom{n}{p}\binom{p}{p_0}K$. Thus, if $\min_{i \in [n]} Y_i^* > ah$, then $Y \in \triangle_n \cap (0,\infty)^n$ does not lie on the boundary of $\triangle_n$, so again $|V - c| \geq h$. Then by a union bound, the exchangeability of $Y^*$ and Lemma~\ref{lem:dirichlet-conditional} and~\eqref{eq:Ystar-exch}, there exists $b > 0$ depending only on $n,p,p_0$ such that if $h \leq b\min(c, 1 - c)/K$, then
\begin{align}
\label{eq:dirichlet-boundary}
\Pr(|V - c| < h) &\leq \Pr\Bigl(\min_{i \in [n]} Y_i^* \leq ah,\;Y^* \in \triangle_n^\circ\Bigr) \\
&\leq \sum_{\Pi \in \mathrm{Perm}(n)'} \Pr\biggl(\min_{i \in [n]} Y_i^* \leq ah \biggm| Y^* \in \triangle_{n,\Pi}\biggr) \cdot \Pr(Y^* \in \triangle_{n,\Pi}) \notag \\[3pt]
&\leq \binom{n}{p} \binom{p}{p_0}\,\Pr\biggl(\min_{i \in [n]} Y_i^* \leq ah \biggm| |r(Y^*) - c| < h\biggr) \cdot \Pr(|V^* - c| < h) \leq \frac{\eta}{2}. \notag
\end{align}
Consequently,
\begin{align}
\dK(\mathsf{F},\mathsf{F}^*) = \sup_{x \in \R}\,\Bigl|\Pr\Bigl(\frac{T_2}{T_3} > x\Bigr) - \Pr\Bigl(\frac{T_2^*}{T_3^*} > x\Bigr)\Bigr| &= \sup_{v \in (0,1)}\,|\Pr(V > v) - \Pr(V^* > v)|
\notag \\
&\geq \frac{\bigl|\Pr(V^* - c| \leq h) - \Pr(|V - c| \leq h)\bigr|}{2} \notag \\[3pt]
\label{eq:exch-lb-dK}
&\geq \frac{\Pr(|V^* - c| < h)}{4} =\frac{\eta}{4}.
\end{align}
Next, since $\sum_{j=1}^3 T_j = \sum_{j=1}^3 T_j^* = 1$, we have
\[
\norm{T - T^*}_2^2 = (T_1 - T_1^*)^2 + (T_2 - T_2^*)^2 + \bigl((T_1 - T_1^*) + (T_2 - T_2^*)\bigr)^2 \leq 2(|T_1 - T_1^*| + |T_2 - T_2^*|)^2.
\]
Now $\{I_1,\dotsc,I_p\}$ is a uniformly chosen random subset of $[n]$ of cardinality $p$, which contains any given $i \in [n]$ with probability $p/n$, so by~\eqref{eq:Gn-proj-dist} in Lemma~\ref{lem:simplex-proj} and~\eqref{eq:Yfixed},
\begin{align}
\W_1(nT,nT^*) \leq n\,\E\norm{T - T^*}_2 &\leq 2^{1/2}n\,\E(|T_1 - T_1^*| + |T_2 - T_2^*|) \leq 2^{1/2}n\sum_{j=1}^p \E(|Y_{J_j} - Y_{J_j}^*|) \notag \\
&= 2^{1/2}n \cdot \frac{p}{n} \sum_{i=1}^n \E|Y_i - Y_i^*| = 2^{1/2}p\,\E(\norm{\mathcal{T}(Y^*) - Y^*}_1 \cdot \Ind_{\{Y \neq Y^*\}}) \notag \\
\label{eq:exch-lb-W1}
&\leq 2^{3/2}p \binom{n}{p}\binom{p}{p_0} Kh \cdot \Pr(Y \neq Y^*) \leq 2^{3/2}p\binom{n}{p}^2\binom{p}{p_0}^2 Kh \cdot \eta,
\end{align}
where
\[
K = \Bigl(\frac{n}{n - p_0}\Bigr)^2 \cdot \frac{2}{\inf_{r \in (c-h,c+h)} |r - \frac{p - p_0}{n - p_0}|}.
\]
Let $B := \mathrm{B}(\frac{p - p_0}{2}, \frac{n - p}{2})$ and $B_j := \mathrm{B}(\frac{p_0}{2}, \frac{n - p_0 - j}{2})$ for $j \in \{0,1,2\}$. Consider the following cases:

\begin{itemize}
\item When $n - p = 1$, let $c,h$ be such that $c - h = 1 - \delta$, where
\[
0 < \delta <
\frac{1}{2(n - p_0)}
\wedge \bigl(1 - 2^{-2/(p - p_0 - 2)_+}\bigr)
\]
and hence
\[
K = \Bigl(\frac{n}{n - p_0}\Bigr)^2 \cdot \frac{2}{1 - \delta - \frac{p - p_0}{n - p_0}} < \frac{4n^2}{n - p_0}.
\]
Taking $b \in (0,1)$ from the line above~\eqref{eq:dirichlet-boundary}, let $c'' := b(n - p_0)/(4n^2) \in (0,1)$ and $h := c''\delta/2$. Then
\[
\eta = \int_{1 - \delta}^{1 - \delta + 2h} \frac{t^{\frac{p - p_0}{2} - 1}(1 - t)^{-1/2}}{B}\,dt \geq \frac{(1 - \delta)^{\frac{(p - p_0 - 2)_+}{2}}}{B} \int_{(1 - c'')\delta}^\delta t^{-1/2}\,dt > \frac{c''\delta^{{1/2}}}{2B}. 
\]
Since $h \leq c''(\delta - h) < (b/K) \cdot (1 - c) = b\min(c, 1 - c)/K$, it follows from~\eqref{eq:exch-lb-dK} and~\eqref{eq:exch-lb-W1} that
\begin{equation}
\label{eq:exch-lb-1}
\dK(\mathsf{F},\mathsf{F}^*) \geq \frac{\eta}{4} > \frac{(\eta{c''}^2\delta)^{1/3}}{2^{8/3}B^{2/3}} \gtrsim_{n,p,p_0} \W_1(nT,nT^*)^{1/3}.
\end{equation}
\item Similarly, if $p - p_0 = 1$, then let $h := c''\delta/2$ and $c := \delta - h$, where
\[
0 < \delta <
\frac{1}{2(n - p_0)}
\wedge \bigl(1 - 2^{-2/(n - p - 2)_+}\bigr).
\]
Then $K < 4n^2/(n - p_0)$ and $\eta > c''\delta^{1/2}/(2B)$ as above, so we obtain the same bound~\eqref{eq:exch-lb-1}.
\item When $n - p > p - p_0 \geq 2$, let $c,h$ be such that $c - h = v_0$, where $v_0 := (p - p_0 - 2)/(n - p_0 - 4)$ if $p - p_0 > 2$ and $v_0 := 1/n$ otherwise. In both cases, $v_0 < (p - p_0)/(n - p_0) < 1/2$. For $h = \delta/2$ with $0 < \delta < \min\bigl\{\frac{p - p_0}{n - p_0} - v_0,\,(1 - v_0)(1 - 2^{-2/(n - p - 2)})\bigr\}/2$, we have
\[
\frac{p - p_0}{n - p_0} - c - h = \frac{p - p_0}{n - p_0} - c + h - \delta \geq \frac{1}{2}\Bigl(\frac{p - p_0}{n - p_0} - v_0\Bigr) =: D_0
\]
so
\[
K = \Bigl(\frac{n}{n - p_0}\Bigr)^2 \cdot \frac{2}{\frac{p - p_0}{n - p_0} - c - h} \leq \frac{2n^2}{(n - p_0)^2 D_0} =: K_0.
\]
Moreover, 
\begin{align*}
\eta = \int_{v_0}^{v_0 + \delta} \frac{t^{\frac{p - p_0}{2} - 1}(1 - t)^{\frac{n - p}{2} - 1}}{B}\,dt &\geq \frac{\delta v_0^{\frac{p - p_0}{2} - 1}(1 - v_0 - \delta)^{\frac{n - p}{2} - 1}}{B} \\
&> \frac{\delta v_0^{\frac{p - p_0}{2} - 1}(1 - v_0)^{\frac{n - p}{2} - 1}}{2B} = \frac{M\delta}{2},
\end{align*}
where $M = \mathrm{M}\bigl(\frac{p - p_0}{2},\frac{n - p}{2}\bigr) = \sup_{v \in (0,1)} g^*(v)$, as defined before the statement of Theorem~\ref{thm:dK-W1}. Therefore, if in addition $h \leq bv_0/(2K_0)$, then $h \leq b\min(c, 1 - c)/K$, so by~\eqref{eq:exch-lb-dK} and~\eqref{eq:exch-lb-W1},
\begin{equation}
\label{eq:exch-lb-2}
\dK(\mathsf{F},\mathsf{F}^*) \geq \frac{\eta}{4} > \Bigl(\frac{\eta M\delta}{32}\Bigr)^{1/2} \gtrsim_{n,p,p_0} \W_1(nT,nT^*)^{1/2}.
\end{equation}
\item If $p - p_0 > n - p \geq 2$, then similarly we choose $c,h$ such that $c + h = v_0$ and $h = \delta/2$ with $0 < \delta < \min\bigl\{v_0 - \frac{p - p_0}{n - p_0},\,v_0(1 - 2^{-2/(p - p_0 - 2)})\bigr\}/2$, so that as in the previous case, $\eta > M\delta/2$ and
\[
K = \Bigl(\frac{n}{n - p_0}\Bigr)^2 \cdot \frac{2}{c - h - \frac{p - p_0}{n - p_0}} \leq K_0
\]
for some $K_0 > 0$ depending only on $n,p,p_0$. If in addition $h \leq bv_0/(2K_0)$, then we obtain the same bound~\eqref{eq:exch-lb-2}.
\item Finally, if $n - p = p - p_0 \geq 2$, then choose $c,h$ such that
\[
c - h = \frac{1}{2} + \frac{1}{2(n - p)^{1/2}} =: v_1
\]
and $h = \delta/2 > 0$, with $\delta < (1 - v_1)(1 - 2^{-2/(n - p - 2))_+})$. Since $M = 1/(2^{n - p - 2}B)$, we have
\begin{align*}
\eta = \int_{v_1}^{v_1 + \delta} \frac{t^{\frac{p - p_0}{2} - 1}(1 - t)^{\frac{n - p}{2} - 1}}{B}\,dt &\geq \frac{\delta v_1^{\frac{p - p_0}{2} - 1}(1 - v_1 - \delta)^{\frac{n - p}{2} - 1}}{B} \\
&\geq \frac{\delta v_1^{\frac{p - p_0}{2} - 1}(1 - v_1)^{\frac{n - p}{2} - 1}}{2B} \\
&\geq \frac{\delta}{2 \cdot 2^{n - p - 2}B} \Bigl(1 - \frac{1}{n - p}\Bigr)^{\frac{n - p}{2} - 1} > \frac{M\delta}{4}.
\end{align*}
Moreover,
\[
K = \Bigl(\frac{n}{n - p_0}\Bigr)^2 \cdot \frac{2}{v_1 - \frac{p - p_0}{n - p_0}} = 4\Bigl(\frac{n}{n - p_0}\Bigr)^2 (n - p)^{1/2}.
\]
Thus, if in addition $h \leq b\min(c, 1 - c)/K$, then by~\eqref{eq:exch-lb-dK} and~\eqref{eq:exch-lb-W1},
\[
\dK(\mathsf{F},\mathsf{F}^*) \geq \frac{\eta}{4} > \Bigl(\frac{\eta M\delta}{64}\Bigr)^{1/2} \gtrsim_{n,p,p_0} \W_1(nT,nT^*)^{1/2}.
\]
\end{itemize}
Recalling~\eqref{eq:exch-lb-W1}, we deduce in all cases that for any $\eta' > 0$, there exist $c,h \in (0,1)$ under the constructions defined above, $\W_1(nT,nT^*) < \eta'$ and the desired bound~\eqref{eq:dK-W1-lb} holds for some multiplicative factor $c' > 0$ depending only on $n,p,p_0$.
\end{proof}

We now present two concrete probabilistic constructions that achieve the lower bound in Proposition~\ref{prop:dK-W1-lb}\textit{(b)} in the cases $p_0 = 0$, $p = 1$ and $p_0 = 0$, $p = 2$ respectively.

\begin{example}
\label{ex:exchangeable-lb-1}
Suppose that $p_0 = 0$, $p = 1$ and $n \geq 3$, and write $1_n = (1,\dotsc,1) \in \R^n$. Let~$\xi$ be a Rademacher random variable and let $\varepsilon^* \sim \mathrm{Unif}(\mathcal{S}^{n-1})$ be independent of $\xi$. Then $X := \xi 1_n$ and~$\varepsilon^*$ are independent, and both have exchangeable coordinates. Letting $u_0 := n^{-1/2}1_n \in \mathcal{S}^{n-1}$, we have $P_0 = 0 \in \R^{n \times n}$ and $P = u_0 u_0^\top$, so 
\[
T_2^* = \frac{\|P\varepsilon^*\|_2^2}{\|\varepsilon^*\|_2^2} \eqd (u_0^\top \varepsilon^*)^2,
\quad
T^*=(0, T_2^*, 1 - T_2^*), \quad\text{and}\quad \mathsf{F}^* = (n - 1)\frac{T_2^*}{1  - T_2^*}.
\]
Since $\Pr(|u_0^\top \varepsilon^*| = 1) = 0$, we can define almost surely
\[
V^{**} := \frac{\varepsilon^*-(u_0^\top \varepsilon^*)u_0}{\sqrt{1-T^*_2}},
\]
which takes values in $\mathcal{S}^{n-2}(u_0^\perp) := \{u \in \mathcal{S}^{n-1}:u_0^\top u = 0\}$.  Then
\[
\varepsilon^* = \mathrm{sgn}(u_0^\top \varepsilon^*) \sqrt{T_2^*}\,u_0 + \sqrt{1-T_2^*}\,V^{**},
\]
where $\mathrm{sgn}(u_0^\top \varepsilon^*)$ is a Rademacher random variable, $V^{**}$ is uniformly distributed on $\mathcal{S}^{n-2}(u_0^\perp)$ and $\bigl(\mathrm{sgn}(u_0^\top \varepsilon^*),V^{**}\bigr)$ is independent of $T^*_2$. 

For $\alpha \in (0,1)$, write $q_\alpha^*$ and $s_\alpha := q_\alpha^*/(n - 1 + q_\alpha^*) \in (0,1)$ for the $(1 - \alpha)$-quantiles of~$\mathsf{F}^*$ and $T_2^*$ respectively, as in the proof of Theorem~\ref{thm:dK-alpha}. For fixed $\alpha' \in (0,\alpha)$, let $s := s_{\alpha'}$ and $h_0 := \min(s - s_\alpha, 1 - s)/2$. For $h \in (0,h_0]$, define
\[
T_{2,h} :=
\begin{cases}
T_2^* + h &\text{if }s - h < T_2^* < s, \\
T_2^* &\text{otherwise}.
\end{cases}
\]
Then the perturbed error vector
\[
\varepsilon_h := \mathrm{sgn}(u_0^\top \varepsilon^*) \sqrt{T_{2,h}}\,u_0 + \sqrt{1 - T_{2,h}}\,V^{**}
\]
takes values in $\mathcal{S}^{n-1}$, is independent of $X$ and has exchangeable coordinates. Moreover,
\[
T_{2,h}=\frac{\|P\varepsilon_h\|_2^2}{\|\varepsilon_h\|_2^2} = (u_0^\top \varepsilon_h)^2 \quad\text{and}\quad \mathsf{F}_h := (n - 1)\frac{T_{2,h}}{1 - T_{2,h}} \geq (n - 1)\frac{T_2^*}{1  - T_2^*} = \mathsf{F}^*,
\] 
so
\[
\Pr(\mathsf{F}_h > q) \geq \Pr(\mathsf{F}^* > q)
\]
for all $q > 0$. In particular, since 
$(n - 1)s/(1 - s) = q_{\alpha'}^*$ is the $(1 - \alpha')$-quantile of $\mathsf{F}^*$, we have
\begin{align}
\label{eq:tail-inflation-bounds}
\Pr(\mathsf F_h > q_{\alpha'}^*) - \alpha' = \Pr(\mathsf F_h > q_{\alpha'}^*) - \Pr(\mathsf F^* > q_{\alpha'}^*) &= \Pr(T_{2,h} > s) - \Pr(T_2^* > s) \nonumber \\
&= \Pr(s - h < T_2^* < s) =: \eta.
\end{align}
By the calculations leading up to~\eqref{eq:eta-lb} in the proof of Proposition~\ref{prop:dK-alpha-lb}, if $s = (1 + s_\alpha)/2$ and $h < (1 - s)/2$, then
\[
\eta \gtrsim_n (\alpha^{1 - \frac{2}{n - 1}}h) \vee h^{\frac{n - 1}{2}}.
\]
Moreover, by construction, $|T_{2,h}- T^*_2| = h\Ind_{\{s - h < T^*_2 < s\}}$, so $\xi := \W_1(T_{2,h},T_2^*) \leq h\eta$ and hence
\[
\eta \gtrsim_n (\alpha^{1 - \frac{2}{n - 1}}\xi/\eta) \vee (\xi/\eta)^{\frac{n - 1}{2}}.
\]
Together with \eqref{eq:tail-inflation-bounds}, this yields
\begin{align*}
d_{\mathrm{K},\alpha}(\mathsf{F},\mathsf{F}^*) \geq \eta \gtrsim_n \max\bigl\{\alpha^{\frac{n - 3}{2(n - 1)}}\W_1(nT,nT^*)^{1/2},\,\W_1(nT,nT^*)^{\frac{n - 1}{n + 1}}\bigr\}.
\end{align*}
For any $\eta' > 0$, we can choose a sufficiently small $h$ in this example so that $\W_1(nT,nT^*) < \eta'$. In conclusion, we have constructed a joint distribution for $(X,\varepsilon)$ where $X$ and $\varepsilon = \varepsilon_h$ are independent and each have exchangeable entries, and satisfying the lower bound in Proposition~\ref{prop:dK-alpha-lb}.  Moreover, the size of the $F$-test is always at least the nominal level, and is strictly anti-conservative at level $\alpha'$.
\end{example}

\begin{example}
\label{ex:exchangeable-lb-2}
Fix $n \geq 6$, set $p_0=0$ and $p=2$, and let
\[
Y^*=(Y_1^*,\ldots,Y_n^*)\sim \mathrm{Dirichlet}\Bigl(\frac{1}{2},\ldots,\frac{1}{2}\Bigr).
\]
Then $Y^*$ has exchangeable components, $\sum_{i=1}^n Y_i^*=1$ almost surely, and for every distinct 
$r,s \in [n]$,
\[
Y_r^*+Y_s^*\sim \mathrm{Beta}\Bigl(1,\frac{n-2}{2}\Bigr).
\]
Let $\Pi=\{I',J'\}$ and $\Lambda=\{I,J\}$ be independent, distinct random unordered pairs, each uniformly distributed on the $N:=\binom n2$ two-element subsets of $[n]$, and assume that $(\Pi,\Lambda)$ is independent of $Y^*$. Define $S:=Y_{I'}^*+Y_{J'}^*$, and for $\delta\in(0,1/n]$, define $Y=(Y_1,\ldots,Y_n)$ by
\[
Y_i := Y_i^*\mathbbm{1}_{\{S\geq \delta\}} + \frac{\delta}{S}\,Y_{I'}^*\mathbbm{1}_{\{S < \delta,i=I'\}} + \frac{\delta}{S}\,Y_{J'}^*\mathbbm{1}_{\{S < \delta,i=J'\}} + \frac{1-\delta}{1-S}\,Y_i^*\mathbbm{1}_{\{S < \delta,i \notin\{I',J'\}\}}
\]
for $i \in [n]$.  Then $Y$ has exchangeable components and $\sum_{i=1}^n Y_i=1$ almost surely.

Let $\xi_1,\dots,\xi_n,\xi_{X_1},\xi_{X_2}$ be independent Rademacher random variables that are independent of $(Y^*,Y,\Pi,\Lambda)$, and define
\[
\varepsilon :=(\xi_1\sqrt{Y_1},\ldots,\xi_n\sqrt{Y_n}),
\qquad
\varepsilon^* :=(\xi_1\sqrt{Y_1^*},\ldots,\xi_n\sqrt{Y_n^*})
\]
and the $n \times 2$ random matrix
\[
X= \begin{pmatrix}\xi_{X_1}e_I & \xi_{X_2}e_J\end{pmatrix}.
\]
Then $X$ has exchangeable rows, $\varepsilon$ and $\varepsilon^*$ have exchangeable coordinates with $\|\varepsilon\|_2=\|\varepsilon^*\|_2=1$, and
$X$ is independent of $(\varepsilon,\varepsilon^*)$. Hence
\[
T_2:=\|P\varepsilon\|_2^2=\varepsilon_I^2+\varepsilon_J^2=Y_I+Y_J,
\qquad
T_2^*:=\|P\varepsilon^*\|_2^2=(\varepsilon_I^*)^2+(\varepsilon_J^*)^2=Y_I^*+Y_J^*,
\]
and $\dK(\mathsf{F},\mathsf{F}^*)=\dK(T_2,T_2^*)$. Since $T_2^*\sim \mathrm{Beta}\bigl(1,(n-2)/2\bigr)$, we have for every $u\in[0,1]$ that
\[
\Pr(T_2^*\leq u)=1-(1-u)^{(n-2)/2}.
\]

Let $\Delta_i:=Y_i-Y_i^*$ for $i \in [n]$. Then 
\[
\E\bigl(|T_2-T_2^*|\,\big|\,Y,Y^*\bigr)
=
\frac1N\sum_{1\leq r<s\leq n} |\Delta_r+\Delta_s|
\leq
\frac1N\sum_{1\leq r<s\leq n} (|\Delta_r|+|\Delta_s|)
=
\frac{2}{n}\sum_{i=1}^n |\Delta_i|.
\]
If $S\geq \delta$, then $Y=Y^*$, so $\sum_{i=1}^n |\Delta_i|=0$. On the other hand, if $S<\delta$, then $\sum_{i=1}^n |\Delta_i|=2(\delta-S),$ and thus
\begin{align*}
\W_1(T_2,T_2^*)
&\leq \E|T_2-T_2^*|
\leq \frac{4}{n}\E\bigl\{(\delta-S)_+\bigr\} \\
&= \frac{4}{n}
\int_0^\delta \Pr(S\leq u) \, du
=
\frac{4}{n}\int_0^\delta \bigl\{1-(1-u)^{(n-2)/2}\bigr\} \,du
\leq \frac{(n-2)\delta^2}{n},
\end{align*}
as $1-(1-u)^a\leq au$ for $u\in[0,1]$ and $a \geq 1$. Therefore,
\begin{equation}
\label{Eq:W1bound}
\W_1(nT,nT^*)=2^{1/2}n\W_1(T_2,T_2^*)\leq 2^{1/2}(n-2)\delta^2.
\end{equation}

We bound $\dK(T_2,T_2^*)$ from below. Define $t:=\delta/2$ and
\[
A_\delta :=\{\Lambda=\Pi,\ S\leq t\},
\qquad
B_\delta :=\{T_2\leq t<T_2^*\}.
\]
On the event $A_\delta$, we have $T_2^*=S\leq t = \delta/2$ and $T_2=Y_{I'}+Y_{J'}=\delta>t$.  Hence
\begin{equation}
\label{Eq:AdeltaBdelta}
\dK(T_2,T_2^*)\geq F_{T_2^*}(t)-F_{T_2}(t)
=
\Pr(T_2^*\leq t<T_2)-\Pr(T_2\leq t<T_2^*)
\geq \Pr(A_\delta)-\Pr(B_\delta).
\end{equation}
Moreover,
\[
\Pr(A_\delta)=\E\bigl(\Ind_{\{S\leq \delta/2\}}\Pr(\Lambda=\Pi \, | \, \Pi,Y^*)\bigr)=\frac{1}{N}\Pr(S\leq \delta/2)
=\frac{1}{N}\bigl\{1-(1-\delta/2)^{(n-2)/2}\bigr\}.
\]
Since $\delta\leq 1/n$, we have $(n-2)\delta/4\leq 1/6$. Further as $1-e^{-u}\leq u$ for all $u\geq 0$ and $1-e^{-u}\geq u/2$ for $u\in[0,\log 2]$, we have
\[
1-(1-\delta/2)^{(n-2)/2}\geq 1-e^{-(n-2)\delta/4}\geq \frac{(n-2)\delta}{8}.
\]
Thus
\[
\Pr(A_\delta)\geq \frac{(n-2)\delta}{8N}
=
\frac{(n-2)\delta}{4n(n-1)}=:c_n\delta.
\]

On $B_\delta$, we must have $S<\delta$, because on $\{S\geq\delta\}$, we have $Y=Y^*$ and thus $T_2=T^*_2$. Also, $\Lambda\neq \Pi$ because otherwise $T_2=\delta$.
Moreover, for every distinct $r,s \in [n]$,
\[
\bigl|(Y_r+Y_s)-(Y_r^*+Y_s^*)\bigr|\leq \delta-S\leq \delta.
\]
Hence on $B_\delta$,
\[
\frac{\delta}{2}<T_2^*\leq \frac{3\delta}{2}.
\]
Conditional on $\Pi$, there are two possibilities for $\Lambda\neq \Pi$:

{Case 1: $|\Lambda \cap \Pi| = 1$.}  
There are $2(n-2)$ such pairs. In that case, since $S<\delta$ and $T_2^*\leq 3\delta/2$, we have $\sum_{i \in \Lambda \cup \Pi} Y_i^* < 5\delta/2$.  Now, for any three distinct indices $r,s,t \in [n]$,
\[
Y_r^* + Y_s^* + Y_t^* \sim \mathrm{Beta}\Bigl(\frac{3}{2},\frac{n-3}{2}\Bigr).
\]
Since $n\geq 6$, we have $(n-5)/2>0$, so by a union bound,
\begin{align*}
\Pr\bigl(\{|\Lambda \cap \Pi| = 1\} \cap B_\delta \, | \, \Pi\bigr)
&\leq
\frac{2(n-2)}{N\, \mathrm{B}\bigl(\frac{3}{2},\frac{n-3}{2}\bigr)} \int_0^{5\delta/2} x^{1/2}(1-x)^{(n-5)/2} \, dx \\ &\leq \frac{2(n-2)}{N} \,
\frac{(5\delta/2)^{3/2}}{(3/2)\mathrm{B}\bigl(\frac{3}{2},\frac{n-3}{2}\bigr)} =: C_n \delta^{3/2}.
\end{align*}

{Case 2: $|\Lambda \cap \Pi| = 0$.} There are $\binom{n-2}{2}$ such pairs. In this case, since $S<\delta$ and $T_2^*\leq 3\delta/2$, we have $\sum_{i \in \Lambda \cup \Pi} Y_i^* < 5\delta/2$.  Now, for any four distinct indices $r,s,t,u \in [n]$,
\[
Y_r^* + Y_s^* + Y_t^* + Y_u^* \sim \mathrm{Beta}\Bigl(2,\frac{n-4}{2}\Bigr).
\]
Since $n\geq 6$, we have $(n-6)/2\geq 0$, so  
\[
\Pr\bigl(\{|\Lambda \cap \Pi| = 0\} \cap B_\delta \,| \,\Pi)
\leq
\frac{\binom{n-2}{2}}{N}\,
\frac{(5\delta/2)^2}{2 \mathrm{B}\bigl(2,\frac{n-4}{2}\bigr)} =: D_n \delta^2.
\]

Combining the two cases, we see that
\[
\Pr(B_\delta)=\E\bigl(\Pr(B_\delta \, | \, \Pi)\bigr) \leq C_n\,\delta^{3/2}+D_n\delta^2\leq (C_n+D_n)\delta^{3/2}.
\]
Define
\[
\delta_n:=\min\biggl\{\frac{1}{n},\biggl(\frac{c_n}{2(C_n+D_n)}\biggr)^2\biggr\},
\]
so for every $\delta \in (0,\delta_n]$,
\[
\Pr(B_\delta)\leq \frac{c_n}{2}\delta,
\]
and thus from~\eqref{Eq:AdeltaBdelta},
\[
\dK(\mathsf{F},\mathsf{F}^*)=\dK(T_2,T_2^*)\geq \frac{c_n}{2}\delta \geq \frac{c_n}{2^{5/4}(n-2)^{1/2}}\W_1(nT,nT^*)^{1/2}.
\]
Therefore, the exponent $1/2$ can be attained even when $X$ and $\varepsilon$ are independent, and each has exchangeable components.
\end{example}

\subsection{Proofs of results in Section~\ref{sec:local-dK}} 

\begin{proof}[Proof of Theorem~\ref{thm:dK-alpha}]
The result follows directly from Theorem~\ref{thm:dK-W1} if $n - p = 1$, so suppose henceforth that $n - p \geq 2$. Given any coupling of $T$ and $T^*$, define $\Delta := T - T^*$. We argue similarly to the proof of Theorem~\ref{thm:dK-W1} but instead bound the local modulus of continuity~\eqref{eq:G-increments} of the distribution function of $V^* := T_2^*/(1 - T_1^*) \sim \mathrm{Beta}(\frac{p - p_0}{2}, \frac{n - p}{2})$, as opposed to its global modulus of continuity in~\eqref{eq:W-prob}. For fixed $\alpha' \in (0,\alpha]$, the $(1 - \alpha')$-quantile $q_{\alpha'}^*$ of $T^*$ satisfies
\[
1 - \alpha' = \Pr(\mathsf{F}^* \leq q_{\alpha'}^*) = \Pr\Bigl(\frac{T_2^*}{T_3^*} \leq \frac{(p - p_0)q_{\alpha'}^*}{n - p}\Bigr) = \Pr\Bigl(V^* \leq \frac{(p - p_0)q_{\alpha'}^*}{n - p + (p - p_0)q_{\alpha'}^*}\Bigr),
\]
so $s_{\alpha'} := (p - p_0)q_{\alpha'}^*/\bigl(n - p + (p - p_0)q_{\alpha'}^*\bigr)$ is the $(1 - \alpha')$-quantile of $V^* \sim \mathrm{Beta}(\frac{p - p_0}{2},\frac{n - p}{2})$.  The function $\alpha' \mapsto s_{\alpha'}$ is decreasing on $(0,\alpha]$.  It therefore follows similarly to~\eqref{eq:dK-Delta} in the proof of Theorem~\ref{thm:dK-W1} that for $\delta > 0$,
\begin{align}
d_{\mathrm{K},\alpha}(\mathsf{F},\mathsf{F}^*) &= \sup_{t \geq q_\alpha^*}\,\bigl|\Pr(\mathsf{F} \leq t) - \Pr(\mathsf{F}^* \leq t)\bigr| \notag \\
\label{eq:dK-alpha-Delta}
&\leq \max_{\epsilon \in \{-1,1\}} \sup_{s \in (s_\alpha,1)}\biggl|\,\Pr\Bigl(V^* \leq s + \frac{\epsilon\delta}{1 - T_1^*}\Bigr) - \Pr(V^* \leq s)\biggr| + \Pr(2^{1/2}\norm{\Delta}_2 \geq \delta).
\end{align}
First consider the case $p_0 \in \mathbb{N}$. For $s \in (s_\alpha,1)$ and $\epsilon \in \{-1,1\}$, letting $v := 1 - s$ and recalling from~\eqref{eq:dirichlet-indep} that $V^*$ and $T_1^*$ are independent, we have
\begin{align}
&\biggl|\Pr\Bigl(V^* \leq s + \frac{\epsilon\delta}{1 - T_1^*}\Bigr) - \Pr(V^* \leq s)\biggr| \notag \\
&\hspace{1cm}= \int_0^1\,\Bigl|\Pr\Bigl(V^* \leq s + \frac{\epsilon\delta}{1 - w}\Bigr) - \Pr(V^* \leq s)\Bigr| \cdot \frac{w^{\frac{p_0}{2} - 1}(1 - w)^{\frac{n - p_0}{2} - 1}}{B_0}\,dw \notag \\
\label{eq:local-dK-1}
&\hspace{1cm}= \int_0^1\,\Bigl|\Pr\Bigl(1 - V^* \geq v - \frac{\epsilon\delta}{w}\Bigr) - \Pr(1 - V^* \geq v)\Bigr| \cdot \frac{w^{\frac{n - p_0}{2} - 1}(1 - w)^{\frac{p_0}{2} - 1}}{B_0}\,dw,
\end{align}
where we recall that $B_0 = \mathrm{B}(\frac{p_0}{2},\frac{n - p_0}{2})$. The density of $1 - V^* \sim \mathrm{Beta}(\frac{n - p}{2},\frac{p - p_0}{2})$ is given by
\[
g(v) = \frac{v^{\frac{n - p}{2} - 1}(1 - v)^{\frac{p - p_0}{2} - 1}}{B}
\]
for $v \in (0,1)$, where $B = \mathrm{B}(\frac{n - p}{2},\frac{p - p_0}{2})$.  Further, if $p - p_0 \geq 2$, the associated distribution function $G$ satisfies
\begin{equation}
\label{eq:G-increments}
|G(v + h) - G(v)| \leq |h|\sup_{v' \leq v + |h|} g(v') \leq \frac{|h|(v + |h|)^{\frac{n - p}{2} - 1}}{B} \lesssim_{n,p,p_0} v^{\frac{n - p}{2} - 1}|h| +  |h|^{\frac{n - p}{2}}
\end{equation}
for $h \in \R$. Now consider the case $p - p_0 = 1$ and suppose that $v < G^{-1}(1/2)$. If $|h| \leq G^{-1}(3/4) - G^{-1}(1/2)$, then $v + |h| < G^{-1}(3/4)$ and 
\[
|G(v + h) - G(v)| \leq |h| \cdot \frac{(v + |h|)^{\frac{n - p}{2} - 1}\bigl(1 - G^{-1}(3/4)\bigr)^{-1/2}}{B} \lesssim_{n,p,p_0} v^{\frac{n - p}{2} - 1}|h| +  |h|^{\frac{n - p}{2}},
\]
whereas if $|h| > G^{-1}(3/4) - G^{-1}(1/2)$, then $|G(v + h) - G(v)| \leq 1 \lesssim_{n,p,p_0} |h|^{\frac{n - p}{2}}$. Therefore,~\eqref{eq:G-increments} also holds in this case. Next, $v_\alpha := 1 - s_\alpha = G^{-1}(\alpha)$ satisfies 
\[
\alpha = G(v_\alpha) = \int_0^{v_\alpha} \frac{v^{\frac{n - p}{2} - 1}(1 - v)^{\frac{p - p_0}{2} - 1}}{B}\,dv \geq \frac{(2^{- \frac{p - p_0}{2} + 2}\wedge2)\,v_\alpha^{\frac{n - p}{2}}}{(n - p)B},
\]
so
\begin{equation}
\label{eq:beta-quantile}
v_\alpha \lesssim_{n,p,p_0} \alpha^{2/(n - p)}.
\end{equation}
Since $\alpha \in (0,1/2)$ by assumption and hence $v_\alpha < G^{-1}(1/2)$, it follows that if $p_0 \in \mathbb{N}$, then by~\eqref{eq:local-dK-1},~\eqref{eq:G-increments} and~\eqref{eq:beta-quantile},
\begin{align}
\max_{\epsilon \in \{-1,1\}} &\sup_{s \in (s_\alpha,1)}\,\biggl|\,\Pr\Bigl(V^* \leq s + \frac{\epsilon\delta}{1 - T_1^*}\Bigr) - \Pr(V^* \leq s)\biggr| \notag \\
&\leq \max_{\epsilon \in \{-1,1\}} \int_0^1\,\sup_{v \in (0,v_\alpha)} \Bigl|G\Bigl(v - \frac{\epsilon\delta}{w}\Bigr) - G(v)\Bigr| \cdot \frac{w^{\frac{n - p_0}{2} - 1}(1 - w)^{\frac{p_0}{2} - 1}}{B_0}\,dw \notag \\
&\lesssim_{n,p,p_0} \int_0^1\,\sup_{v \in (0,v_\alpha)} \biggl\{\frac{v^{\frac{n - p}{2} - 1}\delta}{w} + \Bigl(\frac{\delta}{w}\Bigr)^{\frac{n - p}{2}}\biggr\} \cdot \frac{w^{\frac{n - p_0}{2} - 1}(1 - w)^{\frac{p_0}{2} - 1}}{B_0}\,dw \notag \\
&\leq v_\alpha^{\frac{n - p}{2} - 1}\delta \int_0^1 \frac{w^{\frac{n - p_0}{2} - 2}(1 - w)^{\frac{p_0}{2} - 1}}{B_0}\,dw + \delta^{\frac{n - p}{2}} \int_0^1 \frac{w^{\frac{p - p_0}{2} - 1}(1 - w)^{\frac{p_0}{2} - 1}}{B_0}\,dw \notag \\
\label{eq:local-dK-2}
&\lesssim_{n,p,p_0} (\alpha^{1 - \frac{2}{n - p}}\delta) \vee \delta^{\frac{n - p}{2}},
\end{align}
where the right-hand side is finite because $p - p_0 \geq 1$, $n - p \geq 2$ and hence $n - p_0 \geq 3$. If instead $p_0 = 0$, then $T_1^* = 0$ and it follows directly from~\eqref{eq:G-increments} and~\eqref{eq:beta-quantile} that
\begin{align*}
\max_{\epsilon \in \{-1,1\}} \sup_{s \in (s_\alpha,1)} |\Pr(V^* \leq s + \epsilon\delta) - \Pr(V^* \leq s)| &= \max_{\epsilon \in \{-1,1\}} \sup_{v \in (0,v_\alpha)} |G(v - \epsilon\delta) - G(v)| \\
&\lesssim_{n,p,p_0} (\alpha^{1 - \frac{2}{n - p}}\delta) \vee \delta^{\frac{n - p}{2}},
\end{align*}
so~\eqref{eq:local-dK-2} remains a valid bound. Finally, taking an optimal coupling of $T$ and $T^*$ for which $\E(\|\Delta\|_2) = \W_1(T,T^*)$, we deduce from~\eqref{eq:dK-alpha-Delta},~\eqref{eq:local-dK-2} and Markov's inequality that
\begin{align*}
d_{\mathrm{K},\alpha}(\mathsf{F},\mathsf{F}^*) &\leq \inf_{\delta > 0}\,\biggl\{\max_{\epsilon \in \{-1,1\}}\sup_{s \in (s_\alpha,1)}\,\biggl|\Pr\Bigl(V^* \leq s + \frac{\epsilon\delta}{1 - T_1^*}\Bigr) - \Pr(V^* \leq s)\biggr| + \Pr(2^{1/2}\norm{\Delta}_2 \geq \delta)\biggr\} \\
&\lesssim_{n,p,p_0} \inf_{\delta > 0}\,\biggl\{(\alpha^{1 - \frac{2}{n - p}}\delta) \vee \delta^{(n - p)/2} + \frac{2^{1/2}}{\delta}\E(\|\Delta\|_2)\biggr\} \\
&\lesssim_{n,p,p_0}
\max\bigl\{\alpha^{\frac{n - p - 2}{2(n - p)}}\W_1(nT,nT^*)^{1/2},\,\W_1(nT,nT^*)^{\frac{n - p}{n - p + 2}}\bigr\}
\end{align*}
when $n - p \geq 2$, as required.
\end{proof}

\begin{proof}[Proof of Proposition~\ref{prop:dK-alpha-lb}]
The result in the case $n-p=1$ follows from the proof of Proposition~\ref{prop:dK-W1-lb}\emph{(b)} (provided we also choose $\delta < v_\alpha$), so we assume henceforth that $n-p \geq 2$.  Using the notation in the proof of Theorem~\ref{thm:dK-alpha}, define
\[
v' := \frac{v_\alpha}{2} \wedge \frac{n - p}{4(n - p_0)},
\]
and fix $c := 1 - v' - \delta/2$ and $h := \delta/2$ for some $\delta \in (0,v')$.  Then $c - h \geq 1 - v_\alpha = s_\alpha$, so similarly to the proof of Proposition~\ref{prop:dK-W1-lb}\textit{(b)}, it suffices to obtain a lower bound on
\[
\eta = \Pr(c - h < V^* < c + h) = G(v' + \delta) - G(v')
\]
in~\eqref{eq:exch-lb-dK}. Then 
\[
c - h - \frac{p - p_0}{n - p_0} = \frac{n - p}{n - p_0} - v' - \delta > \frac{n - p}{4(n - p_0)}, 
\]
so in Lemma~\ref{lem:simplex-proj},
\[
K = \Bigl(\frac{n}{n - p_0}\Bigr)^2 \cdot \frac{2}{\inf_{r \in (c-h,c+h)} |r - \frac{p - p_0}{n - p_0}|} = \Bigl(\frac{n}{n - p_0}\Bigr)^2 \cdot \frac{2}{c - h - \frac{p - p_0}{n - p_0}} \lesssim_{n,p,p_0} 1.
\]
Moreover, 
\[
\alpha = G(v_\alpha) = \int_0^{v_\alpha} \frac{v^{\frac{n - p}{2} - 1}(1 - v)^{\frac{p - p_0}{2} - 1}}{B}\,dv \leq 
\frac{2v_\alpha^{\frac{n - p}{2}}(1 - v_\alpha)^{\frac{p - p_0}{2} - 1}}{(n - p)B},
\]
so $v_\alpha \gtrsim_{n,p,p_0} \alpha^{2/(n - p)}$. By definition, $v' \geq \frac{n - p}{2(n - p_0)}\,v_\alpha \gtrsim_{n,p,p_0} \alpha^{2/(n - p)}$. 
Thus, since $v' + \delta \leq (n - p)/(n - p_0)$ and the $\mathrm{Beta}(\frac{n - p}{2},\frac{p - p_0}{2})$ density $g$ is unimodal,
\begin{align}
\eta = G(v' + \delta) - G(v') &\geq \frac{\delta}{2}\Bigl\{g\Bigl(v' + \frac{\delta}{2}\Bigr) \wedge g(v' + \delta)\Bigr\} \notag \\[5pt]
&\geq \frac{\delta(v' + \delta/2)^{\frac{n - p}{2} - 1}}{2B}\Bigl(1 - \frac{n - p}{n - p_0}\Bigr)^{\frac{p - p_0}{2} - 1} \notag \\[5pt]
\label{eq:eta-lb}
&\gtrsim_{n,p,p_0} \delta(v' \vee \delta)^{\frac{n - p}{2} - 1}
\begin{cases}
= \delta^{\frac{n - p}{2}} = \delta^{1/2} \;\;&\text{if }n - p = 1 \\
\gtrsim_{n,p,p_0} (\alpha^{1 - \frac{2}{n - p}}\delta) \vee \delta^{\frac{n - p}{2}} \;\;&\text{if }n - p \geq 2.
\end{cases}
\end{align}
Now following the construction in the proof of Proposition~\ref{prop:dK-W1-lb}\textit{(b)}, which guarantees the independence of $X,\varepsilon$ and the exchangeability of $(X_1,\varepsilon_1),\dotsc,(X_n,\varepsilon_n)$, but with the above choices of $c,h \in (0,1)$, we see from~\eqref{eq:exch-lb-W1} that $\xi := \W_1(nT,nT^*) \lesssim_{n,p,p_0} \eta\delta$. Therefore, in all cases,
\[
\eta \gtrsim_{n,p,p_0}
\begin{cases}
(\xi/\eta)^{1/2} &\text{if }n - p = 1 \\
(\alpha^{1 - \frac{2}{n - p}}\xi/\eta) \vee (\xi/\eta)^{\frac{n - p}{2}} &\text{if }n - p \geq 2,
\end{cases}
\]
so by~\eqref{eq:exch-lb-dK} and the fact that $c - h \geq 1 - v_\alpha = s_\alpha$, we conclude that
\[
d_{\mathrm{K},\alpha}(\mathsf{F},\mathsf{F}^*) \gtrsim \eta \gtrsim_{n,p,p_0}
\begin{cases}
\xi^{1/3} \;\; &\text{if }n - p = 1 \\
(\alpha^{\frac{n - p - 2}{2(n - p)}}\xi^{1/2}) \vee \xi^{\frac{n - p}{n - p + 2}} \;\; &\text{if }n - p \geq 2.
\end{cases}
\]
This establishes the desired bound, and~\eqref{eq:exch-lb-W1} ensures for any $\eta' > 0$, we can always choose $c,h \in (0,1)$ such that $\xi = \W_1(nT,nT^*) < \eta'$. Recall also from~\eqref{eq:anticonservative} that $\Pr(\mathsf{F} > q) \geq \Pr(\mathsf{F}^* > q)$ for all $q > 1$.
\end{proof}

\subsection{Auxiliary results and proofs}

First, we establish the geometric result used to guarantee exchangeability in the proof of Proposition~\ref{prop:dK-W1-lb}\textit{(b)}.

\begin{figure}[t]
\centering
\begin{tikzpicture}[every node/.style={font=\small}]

\def\wth{7.0}
\def\cph{0.73}
\def\cmh{0.67}
\def\cphs{0.27}
\def\cmhs{0.33}

\coordinate (BL) at (0, 0);
\coordinate (BR) at (\wth, 0);
\coordinate (AP) at (\wth/2, \wth*0.866);
\coordinate (C) at (\wth/2, \wth*0.289);
\coordinate (X) at (\wth*0.8, 0);

\coordinate (bot-cmh) at ({\wth*\cmh}, 0);
\coordinate (bot-cph) at ({\wth*\cph}, 0);

\coordinate (bot-cmhs) at ({\wth*\cmhs}, 0);
\coordinate (bot-cphs) at ({\wth*\cphs}, 0);

\coordinate (right-cmh) at (
{\wth + (\wth/2 - \wth)*\cmh},
{\wth*0.866*\cmh}
);
\coordinate (right-cph) at (
{\wth + (\wth/2 - \wth)*\cph},
{\wth*0.866*\cph}
);

\coordinate (right-cmhs) at (
{\wth + (\wth/2 - \wth)*\cmhs},
{\wth*0.866*\cmhs}
);
\coordinate (right-cphs) at (
{\wth + (\wth/2 - \wth)*\cphs},
{\wth*0.866*\cphs}
);

\coordinate (left-cmh) at (
{\wth/2 + (0 - \wth/2)*\cmh},
{\wth*0.866*(1-\cmh)}
);
\coordinate (left-cph) at (
{\wth/2 + (0 - \wth/2)*\cph},
{\wth*0.866*(1-\cph)}
);

\coordinate (left-cmhs) at (
{\wth/2 + (0 - \wth/2)*\cmhs},
{\wth*0.866*(1-\cmhs)}
);
\coordinate (left-cphs) at (
{\wth/2 + (0 - \wth/2)*\cphs},
{\wth*0.866*(1-\cphs)}
);

\path[name path=apex-cph]  (AP) -- (bot-cph);
\path[name path=apex-cmh]  (AP) -- (bot-cmh);
\path[name path=apex-cphs]  (AP) -- (bot-cphs);
\path[name path=apex-cmhs]  (AP) -- (bot-cmhs);
\path[name path=BL-rcph]   (BL) -- (right-cph);
\path[name path=BL-rcmh]   (BL) -- (right-cmh);
\path[name path=BL-rcphs]   (BL) -- (right-cphs);
\path[name path=BL-rcmhs]   (BL) -- (right-cmhs);
\path[name path=BR-lcph]   (BR) -- (left-cph);
\path[name path=BR-lcmh]   (BR) -- (left-cmh);
\path[name path=BR-lcphs]   (BR) -- (left-cphs);
\path[name path=BR-lcmhs]   (BR) -- (left-cmhs);

\path[name path=C-X]   (C) -- (X);
\path[name path=apex-c]  (AP) -- (\wth*0.69, 0);

\path[name intersections={of=apex-cph and BL-rcph,  by=I1}];
\path[name intersections={of=BL-rcph  and BR-lcph,  by=I2}];
\path[name intersections={of=BR-lcph  and apex-cph, by=I3}];

\path[name intersections={of=C-X  and BR-lcph, by=I4}];
\path[name intersections={of=C-X  and apex-c, by=I5}];

\fill[blue!20, pattern=north east lines, pattern color=blue!50]
(AP) -- (bot-cmh) -- (bot-cph) -- cycle;
\fill[blue!20, pattern=north east lines, pattern color=blue!50]
(AP) -- (bot-cmhs) -- (bot-cphs) -- cycle;

\fill[blue!20, pattern=north east lines, pattern color=blue!50]
(BL) -- (right-cmh) -- (right-cph) -- cycle;
\fill[blue!20, pattern=north east lines, pattern color=blue!50]
(BL) -- (right-cmhs) -- (right-cphs) -- cycle;

\fill[blue!20, pattern=north east lines, pattern color=blue!50]
(BR) -- (left-cmh) -- (left-cph) -- cycle;
\fill[blue!20, pattern=north east lines, pattern color=blue!50]
(BR) -- (left-cmhs) -- (left-cphs) -- cycle;

\draw[black, line width=1.2pt] (BL) -- (BR) -- (AP) -- cycle;

\draw[gray!70, line width=0.8pt] (AP) -- (bot-cmh);
\draw[gray!70, line width=0.8pt] (AP) -- (bot-cph);
\draw[gray!70, line width=0.8pt] (AP) -- (bot-cmhs);
\draw[gray!70, line width=0.8pt] (AP) -- (bot-cphs);
\draw[gray!70, line width=0.8pt] (BL) -- (right-cmh);
\draw[gray!70, line width=0.8pt] (BL) -- (right-cph);
\draw[gray!70, line width=0.8pt] (BL) -- (right-cmhs);
\draw[gray!70, line width=0.8pt] (BL) -- (right-cphs);
\draw[gray!70, line width=0.8pt] (BR) -- (left-cmh);
\draw[gray!70, line width=0.8pt] (BR) -- (left-cph);
\draw[gray!70, line width=0.8pt] (BR) -- (left-cmhs);
\draw[gray!70, line width=0.8pt] (BR) -- (left-cphs);

\draw[red!40, line width=1pt, -{Stealth[length=6pt,width=7pt]}] (C) -- (X);

\node[draw=red!70, cross out, line width=1.7pt, minimum size=4pt, inner sep=0pt] at (I4) {};
\fill[red!70] (I5) circle (2pt);
\fill[blue!20] (C) circle (2pt);

\node[below=3pt] at (BL)      {$0$};
\node[below left=3pt] at (bot-cmh) {$c-h$};
\node[below=3pt] at (bot-cph) {$c+h$};
\node[below=3pt] at (BR)      {$1$};

\node[right=3pt] at (right-cmh) {$c-h$};
\node[right=3pt] at (right-cph) {$c+h$};

\node[left=3pt] at (left-cmh) {$c-h$};
\node[left=3pt] at (left-cph) {$c+h$};

\end{tikzpicture}
\caption{Illustration of the configuration in Lemma~\ref{lem:simplex-proj} when $p_0 = 1$, $p = 2$ and $n = 3$, with the shaded region representing the subset $\triangle_n^\circ$ defined in~\eqref{eq:simplex-regions}. The red cross represents the image of the red dot under the projection map $\mathcal{T}$ defined in~\eqref{eq:Gn-simplex} onto the unshaded region $\triangle_n' = \triangle_n \setminus \triangle_n^\circ$.}
\label{fig:simplex-proj}
\end{figure}

\begin{proof}[Proof of Lemma~\ref{lem:simplex-proj}]
For every $\Pi \in \mathrm{Perm}(n)$, we have $r(\Pi u^*) = r(u^*) = (p - p_0)/(n - p_0) \notin (c- h, c + h)$.  Hence $u^* \in \triangle_n'$, and
\[
\mathcal{T}(\Pi u^*) = \mathcal{T}(u^*) = u^* = \Pi u^* = \Pi \mathcal{T}(u^*).
\]
Now fix $u \in \triangle_n \setminus \{u^*\}$. Since $u_\lambda := u + \lambda(u - u^*)/\norm{u - u^*}_1 \notin \triangle_n$ when $\lambda = 2$, we have $\lambda^*(u) \leq 2/\norm{u - u^*}_1 < \infty$. Let $H := \{(x_1,\dotsc,x_n) \in \R^n : \sum_{i=1}^n x_i = 1\}$. Then $\triangle_n^\circ$ and $H \setminus \triangle_n$ are open subsets of $H$ that partition $H \setminus \triangle_n'$, so $\mathcal{T}(u) = u_{\lambda^*(u)} \notin \triangle_n^\circ \cup (H \setminus \triangle_n)$, i.e.~$\mathcal{T}(u) \in \triangle_n'$. If $u \in \triangle_n'$, then $u \notin \triangle_n^\circ$, so $\lambda^*(u) = 0$ and $\mathcal{T}(u) = u$. For $\Pi \in \mathrm{Perm}(n)$ and $u \in \triangle_n$, we have
\[
(\Pi u)_\lambda = \Pi u + \frac{\lambda(\Pi u - u^*)}{\|\Pi u - u^*\|_1} = \Pi\Bigl(u + \frac{\lambda(u - u^*)}{\|u - u^*\|_1}\Bigr) = \Pi u_\lambda
\]
for all $\lambda \geq 0$, so because $\Pi u^* = u^*$ and $\Pi(\triangle_n^\circ) = \triangle_n^\circ$, it follows that $\mathcal{T}(\Pi u) = \Pi\mathcal{T}(u)$.

Next, we show that $u \mapsto \lambda^*(u)$ is lower semicontinuous on $\triangle_n \setminus \{u^*\}$. Given $\lambda_0 > 0$ and $u \in \triangle_n^\circ$ such that $\lambda^*(u) > \lambda_0$, we have $L_{\lambda_0}(u) := \{u_\lambda : \lambda \in [0,\lambda_0]\} \subseteq \triangle_n^\circ$. Since $L_{\lambda_0}(u)$ is a compact line segment and $\triangle_n^\circ$ is open in $\triangle_n$, there exists $\epsilon > 0$ such that $\{v \in \triangle_n : \norm{v - u_\lambda}_2 < \epsilon \text{ for some }\lambda \in [0,\lambda_0]\} \subseteq \triangle_n^\circ$. Therefore, there is an open ball $B \subseteq \triangle_n$ containing $u$ such that $L_{\lambda_0}(v) \subseteq \triangle_n^\circ$ for all $v \in B$, i.e.~$\lambda(v) > \lambda_0$ for such $v$. This shows that $\{u \in \triangle_n : \lambda^*(u) > \lambda_0\}$ is an open subset of $\triangle_n$ for every $\lambda_0 > 0$, so $u \mapsto \lambda^*(u)$ is lower semicontinuous on $\triangle_n \setminus \{u^*\}$ and hence $\mathcal{T}$ is Borel measurable on $\triangle_n$. 

Next, we will prove~\eqref{eq:Gn-proj-dist}. For $u = (u_1,\dotsc,u_n) \in (0,\infty)^n$, define $s(u) := \sum_{i=p_0+1}^p u_i$ and $t(u) := \sum_{i=p_0+1}^n u_i$, so that $r(u) = s(u)/t(u)$. For fixed $\Pi \in \mathrm{Perm}(n)$ and $u \in \triangle_n^\circ$, let $u' := \Pi u$ and $\lambda' := \sup\{\lambda \geq 0 : u_\lambda' \in \triangle_n\}$. Let $a := (p - p_0)/n$ and $b := (n - p_0)/n$. For $\lambda \in (-\|u-u^*\|_1,\lambda')$, we have $s(u_\lambda') = s(u') + \lambda\bigl(s(u') - a\bigr)/\norm{u - u^*}_1$ and $t(u_\lambda') = t(u') + \lambda\bigl(t(u') - b\bigr)/\norm{u - u^*}_1$, which lie in $(0,1)$. Thus,
\[
\theta(\lambda) := r(u_\lambda') = \frac{s(u') + \lambda\bigl(s(u') - a\bigr)/\norm{u - u^*}_1}{t(u') + \lambda\bigl(t(u') - b\bigr)/\norm{u - u^*}_1}
\]
and hence
\[
\theta'(\lambda) = \frac{t(u_\lambda')\bigl(s(u') - a\bigr) - s(u_\lambda')\bigl(t(u') - b\bigr)}{t(u_\lambda')^2 \norm{u - u^*}_1} 
= \frac{t(u')}{t(u_\lambda')^2 \norm{u - u^*}_1} \cdot b \Bigl(r(u') - \frac{a}{b}\Bigr).
\]
If $t(u') \geq b$, then $t(u')/t(u_\lambda')^2 \geq t(u') \geq b$ for all $\lambda \geq 0$. Otherwise, if $t(u') < b$, then $0 \leq t(u_\lambda') \leq t(u') < b$ for all $\lambda \in [0,\lambda')$, so $t(u')/t(u_\lambda')^2 \geq 1/t(u') \geq 1/b \geq b$. Thus, for any $\lambda \in [0,\lambda')$, we have
\[
\frac{t(u')}{t(u_\lambda')^2} \geq b.
\]

Suppose now that $a/b < c - h$, in which case
\[
K = \frac{2/b^2}{(c - h) - a/b}
\]
in~\eqref{eq:simplex-K}. If $u \in \triangle_{n,\Pi}$, then $\theta(0) = r(u') > c - h$ and
\begin{equation}
\label{eq:ratio-increasing}
\inf_{\lambda \in (0,\lambda')} \theta'(\lambda) \geq \frac{b^2}{\norm{u - u^*}_1} \Bigl(c - h - \frac{a}{b}\Bigr) > 0.
\end{equation}
Consequently, if $\lambda > Kh\norm{u - u^*}_1$, then
\[
r(u_\lambda') = \theta(\lambda) \geq \theta(0) + \lambda\inf_{\tilde\lambda \in (0,\lambda')} \theta'(\tilde\lambda) > (c - h) + 2h = c + h,
\]
so $u_\lambda' \notin \triangle_{n,\Pi}$. On the other hand, when $a/b > c + h$, we have
\[
K = \frac{2/b^2}{a/b - (c + h)}.
\]
Arguing similarly, if $u \in \triangle_{n,\Pi}$, then
\[
\sup_{\lambda \in (0,\lambda')} \theta'(\lambda) \leq \frac{b^2}{\|u-u^*\|_1}\Bigl(c + h - \frac{a}{b}\Bigr) < 0.
\]
Hence, if $\lambda > Kh\norm{u - u^*}_1$, then $r(u_\lambda') \leq \theta(0) + \lambda\sup_{\tilde\lambda \in (0,\lambda')} \theta'(\tilde\lambda) < (c + h) - 2h = c - h$, so $u_\lambda' \notin \triangle_{n,\Pi}$. Thus, $u_\lambda' = \Pi u_\lambda \notin \triangle_{n,\Pi}$ whenever $u \in \triangle_{n,\Pi}$ and $\lambda > Kh\norm{u - u^*}_1$. 

Now for $u \in \triangle_n$ and $\Pi \in \mathrm{Perm}(n)$, define
\[
I_\Pi(u) := \{\lambda \geq 0 : \Pi u_\lambda \in \triangle_{n,\Pi}\},
\]
which is an interval by the convexity of $\triangle_{n,\Pi}$; indeed, if $\lambda_1,\lambda_2 \in I_\Pi(u)$ and $t \in (0,1)$, then $\Pi u_{(1 - t)\lambda_1 + t\lambda_2} = \Pi\bigl((1 - t)u_{\lambda_1} + tu_{\lambda_2}\bigr) \in \triangle_{n,\Pi}$. If $I_\Pi(u) \neq \emptyset$ and $\lambda_0 := \inf I_\Pi(u)$, then $I_\Pi(u) = I_\Pi(u_{\lambda_0}) + \lambda_0$, and by the conclusion of the previous paragraph,
\[
\sup I_\Pi(u) - \inf I_\Pi(u) = \sup I_\Pi(u) - \lambda_0 = \sup I_\Pi(u_{\lambda_0}) \leq \norm{u_{\lambda_0} - u^*}_1 Kh \leq 2Kh. 
\]
By the definitions in~\eqref{eq:simplex-regions}, there exists a subset $\mathrm{Perm}(n)' \subseteq \mathrm{Perm}(n)$ of cardinality $\binom{n}{p}\binom{p}{p_0}$ such that $\triangle_n^\circ = \bigcup_{\Pi \in \mathrm{Perm}(n)'} \triangle_{n,\Pi}$ and
\[
\{\lambda \geq 0 : u_\lambda \notin \triangle_n^\circ\} = [0,\infty) \setminus \bigcup_{\Pi \in \mathrm{Perm}(n)'} I_\Pi(u).
\]
Thus, for every $u \in \triangle_n$, we have
\[
\norm{\mathcal{T}(u) - u}_1 =  \lambda^*(u)\norm{u - u^*}_1 = \inf\{\lambda \geq 0 : u_\lambda \notin \triangle_n^\circ\} \leq 2\binom{n}{p}\binom{p}{p_0}Kh,
\]
so~\eqref{eq:Gn-proj-dist} holds. Finally, if $u \in \triangle_n^\circ$ and $\Pi \in \mathrm{Perm}(n)$ satisfy $r(\Pi u) > (p - p_0)/(n - p_0) = a/b$, then by~\eqref{eq:ratio-increasing}, $\lambda \mapsto r(u_\lambda)$ is increasing on $(0,\lambda')$, so $r\bigl(\Pi\mathcal{T}(u)\bigr) \geq r(\Pi u)$. This bound also holds if $u \in \triangle_n'$, since then $\mathcal{T}(u) = u$.
\end{proof}

Next, we justify two conditions mentioned at the start of Section~\ref{sec:main-results}, under which $\beta^0$ is identifiable in a linear model $Y = X\beta^0 + \varepsilon$ for the joint distribution of $(X,Y)$ taking values in $\R^{n \times p} \times \R^n$. 

\begin{lemma}
\label{lem:linear-model-identifiable}
\begin{enumerate}[label=(\alph*), leftmargin=0.7cm]
\item  If $Xv$ is non-deterministic for all fixed $v \in \R^p \setminus \{0\}$, then there is at most one $\beta \in \R^p$ such that $X$ and $Y - X\beta$ are independent.
\item If $X$ has full column rank with positive probability, then there is at most one $\beta \in \R^p$ such that $X$ and $\varepsilon = (\varepsilon_1,\dotsc,\varepsilon_n) := Y - X\beta$ are independent, and $\varepsilon_i \eqd -\varepsilon_i$ for all $i \in [n]$.
\end{enumerate}
\end{lemma}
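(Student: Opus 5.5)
For both parts we argue by contradiction. Suppose $\beta,\beta'$ both have the stated property, and set $v := \beta - \beta' \neq 0$, $\varepsilon := Y - X\beta$ and $\varepsilon' := Y - X\beta'$. Then $\varepsilon' = \varepsilon + Xv$, with both $\varepsilon$ and $\varepsilon'$ independent of $X$. The main tool will be characteristic functions. For $u \in \R^n$, let $\phi(u) := \E e^{iu^\top\varepsilon}$ and $\phi'(u) := \E e^{iu^\top\varepsilon'}$. Conditioning on $X$ and using independence gives
\[
\phi'(u) = \E\bigl(e^{iu^\top\varepsilon} e^{iu^\top Xv}\bigr) = \E\bigl\{e^{iu^\top Xv}\,\E(e^{iu^\top\varepsilon}\mid X)\bigr\} = \phi(u)\,\psi(u),
\]
where $\psi(u) := \E e^{iu^\top Xv}$. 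The identity $\varepsilon = \varepsilon' - Xv$ gives, in the same way, $\phi(u) = \phi'(u)\,\overline{\psi(u)}$. Combining the two, $\phi(u) = \phi(u)|\psi(u)|^2$. Since $\phi(0) = 1$ and $\phi$ is continuous, $\phi \neq 0$ on a neighbourhood $U$ of $0$, so $|\psi(u)| = 1$ for all $u \in U$.

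\emph{Part (a).} Fix $u$ and set $Z := u^\top Xv$. If $|\E e^{itZ}| = 1$ for all $t$ in a neighbourhood of $0$, then for each such $t \neq 0$, $Z$ is almost surely confined to a lattice $a_t + (2\pi/t)\mathbb{Z}$. Letting $t \to 0$, the lattice spacing blows up while the offset stays controlled, which forces $Z$ to be almost surely constant. Applying this with $tu$ in place of $u$, we conclude that $u^\top Xv$ is almost surely constant for every $u$ in a spanning set of $\R^n$. Hence $Xv$ is deterministic, contradicting the hypothesis of (a). The one point needing care is the lattice argument. A cleaner route is to use $|\E e^{itZ}|^2 = \E \cos\{t(Z - \tilde Z)\}$ with $\tilde Z$ an independent copy of $Z$. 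This equals $1$ near $t = 0$ only if $Z - \tilde Z = 0$ almost surely, since $t \mapsto \E\cos\{t(Z-\tilde Z)\} = 1$ on an interval forces $t(Z-\tilde Z) \in 2\pi\mathbb{Z}$ almost surely for all small $t$, and hence $Z - \tilde Z = 0$. So $Z$ is constant.

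\emph{Part (b).} Here $Xv$ may be deterministic, so the previous argument does not apply; instead we use the symmetry of the coordinates. Take the one-dimensional versions of the identities above. Fix $i$ and set $W := (Xv)_i$. Then $\varepsilon_i' = \varepsilon_i + W$ with $\varepsilon_i$ and $\varepsilon_i'$ each independent of $W$ and symmetric. Their characteristic functions $\phi_i$ and $\phi_i'$ are therefore real. The relation $\phi_i'(t) = \phi_i(t)\,\E e^{itW}$ holds, and near $0$ we have $\phi_i, \phi_i' > 0$. Hence $\E e^{itW}$ is real and positive near $0$, and by the argument above it has modulus $1$ there. So $\E e^{itW} = 1$ near $0$, which forces $W = 0$ almost surely. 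Applying this for every $i$ gives $Xv = 0$ almost surely. But on the event of positive probability where $X$ has full column rank, $Xv \neq 0$, a contradiction.

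The main obstacle is the deduction ``$|\psi| = 1$ near $0$ $\Rightarrow$ constant''. The symmetrisation trick with an independent copy $\tilde Z$ handles it without any moment assumptions.
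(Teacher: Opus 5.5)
Your argument is correct, and its key step differs from the paper's. The paper proves (a) through a separate lemma (Lemma~\ref{lem:indep-sum}). There, independence of $V$ and $V+W$ is used for the full joint characteristic function, giving Cauchy's functional equation $\varphi_V(s+t)=\varphi_V(s)\varphi_V(t)$ near the origin. The paper extends this to all of $\R^n$ and solves it via a result on continuous solutions, giving $\varphi_V(t)=e^{iv^\top t}$.

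You use only the two identities $\phi'=\phi\psi$ and $\phi=\phi'\bar\psi$. In effect you evaluate the joint characteristic function of $(Xv,\varepsilon')$ only at pairs $(-u,u)$. This gives $|\psi|=1$ near $0$, and you conclude degeneracy by symmetrisation. That proves the same general lemma with strictly less machinery and no appeal to functional-equation theory.

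For (b), the paper reuses (a) to show that $X(\beta-\beta')$ equals a deterministic $v\neq 0$ almost surely, and then argues via medians. Your observation that symmetric laws have real characteristic functions forces $\E e^{itW}=1$ near $0$ directly. This avoids any discussion of possibly non-unique medians.

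Two small points. First, your remark that the argument of (a) ``does not apply'' in (b) is not quite right. It still shows that $Xv$ is almost surely constant, which is exactly what the paper exploits; only the final contradiction is lost. Second, in the symmetrisation step you should restrict $t$ to a countable set (e.g.\ the rationals in $(0,\epsilon)$) before interchanging ``almost surely'' with ``for all small $t$''.
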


\begin{proof}
\textit{(a)} Suppose that $X$ and $\varepsilon := Y - X\beta$ are independent. Then for $\beta' \neq \beta$, we write $Y - X\beta' = \varepsilon + V$, where $V := X(\beta - \beta')$ is independent of $\varepsilon$ and non-deterministic by assumption. Therefore, by Lemma~\ref{lem:indep-sum} below, $Y - X\beta'$ is not independent of $V$ and hence $X$, so $\beta$ is the unique vector with the specified property.

\medskip
\noindent \textit{(b)} Suppose that $\varepsilon = Y - X\beta$ satisfies the required conditions for some $\beta \in \R^p$. For $\beta' \neq \beta$, the argument in \textit{(a)} shows that if $\varepsilon' = (\varepsilon_1',\dotsc,\varepsilon_n') := Y - X\beta'$ is independent of $X$, then $\varepsilon' - \varepsilon = X(\beta - \beta') = v$ almost surely for some deterministic $v = (v_1,\dotsc,v_n) \in \R^n$. Since $X$ has full rank with positive probability, we must have $v \neq 0$, i.e.~$v_i \neq 0$ for some $i \in [n]$. Now $\varepsilon_i \eqd -\varepsilon_i$ has median 0, so $\varepsilon_i' = v_i + \varepsilon_i$ has median $v_i \neq 0$, and hence does not have a symmetric distribution. This again establishes the uniqueness of $\beta$.
\end{proof}

The proof of Lemma~\ref{lem:linear-model-identifiable} relies on the following elementary probabilistic fact.

\begin{lemma}
\label{lem:indep-sum}
If $V$ and $W$ are independent $\R^n$-valued random vectors such that $V$ is non-deterministic, then $V$ and $V + W$ are not independent.
\end{lemma}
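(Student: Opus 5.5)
Suppose, for contradiction, that $V$ and $V+W$ are independent. I will work with characteristic functions. Write $\phi_V,\phi_W,\phi_{V+W}$ for the characteristic functions and $\phi_{(V,V+W)}(s,t) := \E\exp\{i(s^\top V + t^\top(V+W))\}$ for the joint one.

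First, independence of $V$ and $W$ gives
\[
\phi_{(V,V+W)}(s,t) = \E e^{i(s+t)^\top V}\,\E e^{it^\top W} = \phi_V(s+t)\phi_W(t).
\]
The assumed independence of $V$ and $V+W$ gives $\phi_{(V,V+W)}(s,t) = \phi_V(s)\phi_V(t)\phi_W(t)$. Hence
\[
\phi_V(s+t)\phi_W(t) = \phi_V(s)\phi_V(t)\phi_W(t)
\]
for all $s,t \in \R^n$. Since $\phi_W$ is continuous with $\phi_W(0)=1$, there is a neighbourhood $U$ of $0$ on which $\phi_W \neq 0$. Therefore $\phi_V(s+t) = \phi_V(s)\phi_V(t)$ for all $s \in \R^n$ and $t \in U$.

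Second, I avoid solving the Cauchy equation directly and use a cleaner device instead. Take an independent copy $V'$ of $V$ and set $D := V - V'$, so that $\phi_D = |\phi_V|^2$. The functional equation then gives $|\phi_V(s+t)|^2 = |\phi_V(s)|^2|\phi_V(t)|^2$ for $t\in U$. Put $s=t$ with $t,2t \in U$ (shrink $U$ to a ball), and set $\psi := |\phi_V|^2$, which is real with values in $[0,1]$. This yields $\psi(2t)=\psi(t)^2$.

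I expect the main step to be getting from this to $\psi\equiv1$ near $0$. The simplest route is to use the relation for general $s$ rather than doubling alone. Fix any $t\in U$. Applying the relation with $s$ replaced successively by $0,t,2t,\dotsc$ gives $\psi(kt)=\psi(t)^k$ for all $k\in\N$.

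Now use the second difference. Because $\psi = \phi_D$ and $D$ is symmetric,
\[
\E\{1-\cos(t^\top D)\} = 1-\psi(t).
\]
Suppose $\psi(t_0) <1$ for some $t_0\in U$. Then $\psi(kt_0)\to0$ as $k \to \infty$. This alone is no contradiction, since characteristic functions can decay.

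So I switch instead to a direct argument that $\phi_V(t)=e^{ia^\top t}$ on $U$. The relation holds for every $s$ and every $t\in U$. Suppose first that $\phi_V$ never vanishes. Then $\log\phi_V$ is additive in $t$ on a neighbourhood and continuous, hence linear: $\phi_V(t)=e^{c^\top t}$ there with $c\in\C^n$. Boundedness, $|\phi_V(kt)| = |e^{kc^\top t}|\le1$ for all $k$ and both signs $\pm t$, forces $\mathrm{Re}(c)=0$. Moreover $\phi_V(s+t)=\phi_V(s)e^{c^\top t}$ for all $s$ and small $t$ propagates to all of $\R^n$, so $\phi_V(u)=e^{i a^\top u}$ everywhere. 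This means $V=a$ almost surely, contradicting non-determinism.

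It remains to show that $\phi_V$ has no zero. If $\phi_V(s_0)=0$, the relation forces $\phi_V(s_0-t)\phi_V(t)=0$ for all $t\in U$. Since $\phi_V$ is nonzero near $0$, this gives $\phi_V\equiv0$ on the neighbourhood $s_0-U'$. Repeating the argument step by step along the segment back to $0$ contradicts $\phi_V(0)=1$. Hence $\phi_V$ is nonvanishing, completing the proof.
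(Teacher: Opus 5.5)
Your argument is correct and is essentially the paper's proof. You derive $\varphi_V(s+t)\varphi_W(t)=\varphi_V(s)\varphi_V(t)\varphi_W(t)$, cancel $\varphi_W$ near $0$ to get a local Cauchy equation, solve it as $e^{c^\top t}$ and propagate to all of $\R^n$, and then use $|\varphi_V|\le 1$ to force $\mathrm{Re}(c)=0$, so $V$ is almost surely constant. Two pieces of your write-up can be cut: the symmetrisation detour via $D=V-V'$ leads nowhere, and the separate proof that $\varphi_V$ has no zeros is redundant, because your propagation step $\varphi_V(s+t)=\varphi_V(s)e^{c^\top t}$ already determines $\varphi_V$ everywhere.
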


Here, $V$ and $W$ need not be integrable, so $\Cov(V,W)$ and $\E(W\,|\,V)$ may not be well-defined.

\begin{proof}
The characteristic functions of $V$ and $W$ are given by $\varphi_V(t) := \E(e^{it^\top V})$ and $\varphi_W(t) := \E(e^{it^\top W})$ respectively for $t \in \R^n$. Suppose for a contradiction that $V$ and $V + W$ are independent. Then for $s,t \in \R^n$, we have
\begin{align*}
\varphi_V(s + t)\,\varphi_W(t) = \E(e^{i(s + t)^\top V})\,\E(e^{it^\top W}) &= \E(e^{i(s + t)^\top V} \cdot e^{it^\top W}) = \E(e^{is^\top V} \cdot e^{it^\top(V + W)}) \\
&= \E(e^{is^\top V})\,\E(e^{it^\top(V + W)}) \\
&= \E(e^{is^\top V})\,\E(e^{it^\top V})\,\E(e^{it^\top W}) = \varphi_V(s)\,\varphi_V(t)\,\varphi_W(t),
\end{align*}
where the second and penultimate equalities hold by the independence of $V$ and $W$. Since $\varphi_W(0) = 1$ and $\varphi_W$ is continuous on $\R^n$, there exists $\epsilon > 0$ such that $\varphi_W(t) \neq 0$ whenever $\norm{t}_2 \leq \epsilon$, in which case
\begin{equation}
\label{eq:cauchy-functional-eqn}
\varphi_V(s + t) = \varphi_V(s)\,\varphi_V(t)
\end{equation}
for all $s \in \R^n$. For general $s,t \in \R^n$, choose $m \in \N$ for which $\norm{t}_2/m \leq \epsilon$, so that by~\eqref{eq:cauchy-functional-eqn}, $\varphi_V(t) = \varphi_V\bigl(m \cdot (t/m)\bigr) = \varphi_V(t/m)^m$ and hence
\[
\varphi_V(s + t) = \varphi_V\Bigl(s + m \cdot \frac{t}{m}\Bigr) = \varphi_V(s)\,\varphi_V\Bigl(\frac{t}{m}\Bigr)^m = \varphi_V(s)\,\varphi_V(t).
\]
Thus,~\eqref{eq:cauchy-functional-eqn} holds for all $s,t \in \R^n$. Since the characteristic function $\varphi_V$ is continuous and $\varphi_V(t) \neq 0$ whenever $\norm{t}_2 \leq \epsilon$, there exists $z \in \C^n$ such that $\varphi_V(t) = e^{z^\top t}$ for all $t \in \R^n$; see for instance the proofs of Theorems~5.5.2 and~13.1.4 of \citet{kuczma2009introduction}. Writing $z = u + iv$ with $u,v \in \R^n$, we have
\[
e^{u^\top t} = |e^{z^\top t}| = |\varphi_V(t)| \leq 1
\]
for all $t \in \R^n$, so $u = 0$. We conclude that there exists $v \in \R^n$ such that $\varphi_V(t) = e^{iv^\top t}$ for all $t \in \R^n$, so by the uniqueness of characteristic functions, $V = v$ almost surely. This contradicts the hypothesis that $V$ is non-deterministic, so the proof is complete.
\end{proof}

Next, the appearance of the exponents $1/3$ and $1/2$ in Theorem~\ref{thm:dK-W1} can be understood via the following lemma, in conjunction with the bound~\eqref{eq:W-prob} on the global modulus of continuity of the $\mathrm{Beta}(\frac{p - p_0}{2},\frac{n - p}{2})$ distribution function; see the paragraph containing~\eqref{eq:holder} for further discussion.

\begin{lemma}
\label{lem:holder-L1-Linfty}
Let $F,G \colon \R \to [0,1]$ be increasing functions, and suppose that $G$ is $(\gamma,L)$-H\"older for some $\gamma \in (0,1]$ and $L > 0$. Then
\[
\norm{F - G}_\infty \leq L^{1/(\gamma + 1)}\Bigl(\frac{\gamma + 1}{\gamma} \norm{F - G}_1\Bigr)^{\gamma/(\gamma + 1)}.
\]
Equality holds if $G(x) = Lx^\gamma \wedge 1$ and $F(x) = G(x) \vee d$ for $x \geq 0$, where $d \in (0,1)$, and $F(x) = G(x) = 0$ for $x < 0$. 
\end{lemma}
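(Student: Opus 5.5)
The plan is to set $\Delta := \norm{F - G}_\infty$ and $I := \norm{F - G}_1$, assume $\Delta > 0$ (otherwise nothing is to be proved), and show that near any point where $|F - G|$ is close to $\Delta$, the difference stays large on an interval whose length is forced by the H\"older continuity of $G$ and the monotonicity of $F$. Fix $\epsilon \in (0,\Delta)$ and choose $x_0$ with $|F(x_0) - G(x_0)| > \Delta - \epsilon$. There are two cases by sign.

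Suppose first that $F(x_0) - G(x_0) > \Delta - \epsilon$. For $x \geq x_0$, monotonicity gives $F(x) \geq F(x_0)$. The H\"older bound gives $G(x) \leq G(x_0) + L(x - x_0)^\gamma$. Together these yield
\[
F(x) - G(x) \geq \Delta - \epsilon - L(x - x_0)^\gamma,
\]
which is positive for $0 \leq x - x_0 < ((\Delta - \epsilon)/L)^{1/\gamma}$. Integrating this lower bound over that interval gives
\[
I \geq \int_0^{((\Delta-\epsilon)/L)^{1/\gamma}} \bigl(\Delta - \epsilon - Lu^\gamma\bigr)\,du = \frac{\gamma}{\gamma + 1}\,(\Delta - \epsilon)^{(\gamma+1)/\gamma}L^{-1/\gamma}.
\]

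If instead $G(x_0) - F(x_0) > \Delta - \epsilon$, the argument is symmetric and uses $x \leq x_0$. There $F(x) \leq F(x_0)$ and $G(x) \geq G(x_0) - L(x_0 - x)^\gamma$, so the same integral bound holds.

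Letting $\epsilon \searrow 0$ gives $\Delta^{(\gamma+1)/\gamma} \leq L^{1/\gamma}\,\frac{\gamma+1}{\gamma}\,I$. Raising both sides to the power $\gamma/(\gamma+1)$ yields exactly the stated inequality.

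For the equality case, take $G(x) = Lx^\gamma \wedge 1$ and $F = G \vee d$ on $[0,\infty)$, with $F = G = 0$ on $(-\infty,0)$. Then $F - G = (d - Lx^\gamma)_+$ on $[0,\infty)$ and vanishes elsewhere, so $\Delta = d$, attained at $x = 0$. Its integral is
\[
I = \frac{\gamma}{\gamma+1}\,d^{(\gamma+1)/\gamma}L^{-1/\gamma},
\]
which is precisely equality in the computation above. One should also confirm that this $G$ is $(\gamma,L)$-H\"older: $x^\gamma$ is $\gamma$-H\"older with constant $1$ for $\gamma \in (0,1]$, truncation by $\wedge 1$ preserves this, and the junction at $0$ is fine since $G(x) = 0$ for $x < 0$.

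The only mildly delicate point is that the supremum may not be attained, because $F$ need not be continuous. The $\epsilon$-approximation handles this, and the one-sided monotonicity argument needs no continuity of $F$. No step looks like a real obstacle; the main care is choosing the correct side of $x_0$ in each sign case.
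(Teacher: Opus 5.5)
Your proof is correct and is essentially the paper's argument: monotonicity of $F$ on the appropriate side of a point, combined with the H\"older bound on $G$, forces $|F-G| \geq d - L h^\gamma$ on an interval of length $(d/L)^{1/\gamma}$, and integrating gives the bound, with the same direct computation for the equality case. The only cosmetic difference is that the paper applies this at every $x$ with $d = |F(x)-G(x)|$ and then takes the supremum over $x$, which makes your $\epsilon$-approximation unnecessary (though harmless).
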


\begin{proof}
Fix any $x \in \R$ and suppose first that $d := F(x) - G(x) \geq 0$. Then $F(x + h) \geq G(x) + d$ for all $h \geq 0$ and $G(x + h) \leq G(x) + Lh^\gamma$, so
\[
\norm{F - G}_1 \geq \int_x^{x + (d/L)^{1/\gamma}} |F - G| \geq \int_0^{(d/L)^{1/\gamma}} (d - Lh^\gamma)\,dh = \frac{\gamma}{\gamma + 1} \cdot \frac{|F(x) - G(x)|^{(\gamma + 1)/\gamma}}{L^{1/\gamma}}.
\]
Similarly, if $d < 0$, then considering $h \in \bigl(-(|d|/L)^{1/\gamma},0\bigr)$ yields the same conclusion. The first assertion of the result follows by taking a supremum over $x \in \R$.

If $F$ and $G$ are as in the second part of the lemma, then equality holds throughout the previous display because $d = F(0) - G(0)$ and 
\[
F(y) - G(y) = (d - Ly^\gamma)\Ind_{\{y \in [0,(d/L)^{1/\gamma}]\}} \geq 0
\]
for $y \in \R$. Therefore, the conclusion of the lemma holds with equality.
\end{proof}

The following probabilistic fact is used in the proof of Proposition~\ref{prop:dK-W1-lb}\textit{(b)}.

\begin{lemma}
\label{lem:dirichlet-conditional}
Given $n \in \N$, let $Y^* = (Y_1^*,\dotsc,Y_n^*) \sim \mathrm{Dirichlet}(1/2,\dotsc,1/2)$ and define $S_i := \sum_{j=1}^i Y_j^*$ for $i \in [n]$. Let $\ell,m \in [n]$ be such that $\ell < m$. Then for any $a > 0$ and $c,h \in (0,1)$ such that $h < \min(c, 1 - c)/2 =: d$, we have
\[
\Pr\biggl(\min_{i \in [n]} Y_i^* \leq ah \biggm| c - h < \frac{S_\ell}{S_m} < c + h\biggr) \lesssim_{\ell,m,n} \Bigl(\frac{ah}{d}\Bigr)^{1/2}.
\]
\end{lemma}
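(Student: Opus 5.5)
Throughout, write $\epsilon := ah$; the target bound is $(\epsilon/d)^{1/2}$, so I may assume $\epsilon \leq d$, since otherwise the bound is trivial. The plan is to write the conditional probability as a ratio $\Pr(\min_i Y_i^* \leq \epsilon,\,E)/\Pr(E)$, where $E := \{c - h < S_\ell/S_m < c + h\}$, and to control the numerator by a union bound over $i \in [n]$. I would use the standard gamma representation $Y_j^* = G_j/\sum_k G_k$ with $G_j \sim \Gamma(1/2,1)$ independent, or equivalently the Dirichlet aggregation and neutrality properties. These give
\[
R := S_\ell/S_m \sim \mathrm{Beta}(\ell/2,(m - \ell)/2),
\]
and $R$ is independent of $S_m$ and of the internal proportions $(Y_j^*/S_\ell)_{j \leq \ell}$ and $(Y_j^*/(S_m - S_\ell))_{\ell < j \leq m}$, which are themselves mutually independent $\mathrm{Dirichlet}(1/2,\dotsc,1/2)$ vectors. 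The denominator is therefore $\Pr(E) = \int_{c-h}^{c+h} g_{\ell/2,(m-\ell)/2}$. Since $h < d$, the interval $(c - h, c + h)$ lies in $[c/2,(1+c)/2]$, where the Beta density is comparable (up to constants depending on $\ell,m$) to $c^{\ell/2 - 1}(1 - c)^{(m-\ell)/2 - 1}$. This gives $\Pr(E) \asymp h\,c^{\ell/2-1}(1-c)^{(m-\ell)/2-1}$.

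For the numerator I would split into three cases according to where $i$ lies.
\begin{itemize}
\item If $i > m$, then $Y_i^*$ can be expressed through $1 - S_m$ and proportions outside the first $m$ coordinates. These are independent of $R$, so the event factorises: $\Pr(Y_i^* \leq \epsilon) \cdot \Pr(E)$ with $\Pr(Y_i^* \leq \epsilon) \lesssim \epsilon^{1/2}$, because $Y_i^* \sim \mathrm{Beta}(1/2,(n-1)/2)$.
\item If $i \leq \ell$, write $Y_i^* = W_i R S_m$, where $W_i := Y_i^*/S_\ell \sim \mathrm{Beta}(1/2,(\ell-1)/2)$ and $W_i$, $R$ and $S_m$ are independent. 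On $E$ we have $R \geq c/2 \geq d$, so $\{Y_i^* \leq \epsilon\} \subseteq \{W_i S_m \leq \epsilon/d\}$. The probability of the latter is $\lesssim (\epsilon/d)^{1/2}$: its density in $W_i$ has the $x^{-1/2}$ singularity at $0$, and $S_m$ has a Beta law, so a short computation shows $\Pr(W_i S_m \leq t) \lesssim t^{1/2}$. By independence from $R$ this yields the bound $(\epsilon/d)^{1/2}\Pr(E)$.
\item If $\ell < i \leq m$, the same argument applies with $1 - R \geq d$ on $E$.
\end{itemize}
The degenerate cases ($\ell = 1$, so that $W_i \equiv 1$, or $m = n$, so that $S_m \equiv 1$) only make things easier.

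Summing over the $n$ indices and dividing by $\Pr(E)$ gives $\lesssim_{\ell,m,n} (\epsilon/d)^{1/2} + \epsilon^{1/2} \lesssim (\epsilon/d)^{1/2}$, since $d < 1$. The main step requiring care is verifying the independence structure, namely that $R$ is independent of $(W_i, S_m)$ and of the outside coordinates. I would justify this via the gamma representation: $R$ depends only on $\sum_{j \leq \ell} G_j / \sum_{j \leq m} G_j$, which is independent of $\sum_{j \leq m} G_j$, of the within-block proportions, and of $(G_j)_{j > m}$, by Lukacs' theorem applied blockwise. Because the event factorises exactly, the comparability estimate for $\Pr(E)$ is not even needed.
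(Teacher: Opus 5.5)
Your proposal is correct and is essentially the paper's argument: a union bound over the blocks $i \le \ell$, $\ell < i \le m$ and $i > m$, with Dirichlet neutrality making $S_\ell/S_m$ independent of the within-block proportions, of $S_m$ and of the coordinates beyond $m$. On the conditioning event you then use $S_\ell/S_m > c/2$ and $1 - S_\ell/S_m > (1-c)/2$, together with $\Pr(W \le t) \lesssim t^{1/2}$ for $W \sim \mathrm{Beta}(1/2,\cdot)$ and $\E(S_m^{-1/2}) < \infty$; the latter is what your "short computation" needs, and it holds because $m \geq 2$. The only differences are cosmetic: you bound both ratios below by $d$ at once rather than by $c/2$ and $(1-c)/2$ separately, and you justify the independence via the gamma representation rather than citing the neutrality property directly.
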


\begin{proof}
First, if $Z \sim \mathrm{Beta}(1/2,b/2)$ for $b > 0$, then for all $t > 0$, we have
\[
\Pr(Z \leq t) = \int_0^t \frac{x^{-1/2}(1 - x)^{b/2-1} }{B}\,dx \leq \zeta_b t^{1/2},
\]
where $\zeta_b > 0$ depends only on $b$. Moreover, $Y^*$ is an exchangeable random vector with $S_i = \sum_{j=1}^i Y_j^* \sim \mathrm{Beta}(\frac{i}{2},\frac{n - i}{2})$ for each $i \in [n-1]$ and $\E(S_i^{-1/2}) = \mathrm{B}(\frac{i - 1}{2},\frac{n - i}{2})/\mathrm{B}(\frac{i}{2},\frac{n - i}{2}) < \infty$ for $i \geq 2$. We also have $c/2 < c - h < c + h < (1 + c)/2$. Since $S_\ell/S_m \sim \mathrm{Beta}(\frac{\ell}{2},\frac{m - \ell}{2})$, this random variable lies in $(c - h, c + h)$ with positive probability.

Now by \citet[Proposition~G.3]{ghosal2017fundamentals}, $Y_{[\ell]}^*/S_\ell$, $S_\ell/S_m$ and $S_m \sim \mathrm{Beta}(\frac{m}{2},\frac{n - m}{2})$ are independent for $m \geq 2$ and $\ell \in [m-1]$ and $Y_\ell^*/S_\ell \sim \mathrm{Beta}(\frac{1}{2},\frac{\ell-1}{2})$ for $\ell \in \{2,\ldots,m-1\}$. Thus, if $h < d$, then for $\ell \in \{2,\ldots,m-1\}$,
\begin{align}
\Pr\biggl(\min_{i \in [\ell]} Y_i^* \leq ah \,\biggm| \Bigl|\frac{S_\ell}{S_m} - c\Bigr| < h\biggr) &= \Pr\biggl(\min_{i \in [\ell]} \frac{Y_i^*}{S_\ell} \leq \frac{S_m}{S_\ell} \cdot \frac{ah}{S_m} \,\biggm| \Bigl|\frac{S_\ell}{S_m} - c\Bigr| < h\biggr) \notag \\
&\leq \Pr\biggl(\min_{i \in [\ell]} \frac{Y_i^*}{S_\ell} \leq \frac{2ah}{cS_m} \,\biggm| \Bigl|\frac{S_\ell}{S_m} - c\Bigr| < h\biggr) \notag \\
&= \Pr\biggl(\min_{i \in [\ell]} \frac{Y_i^*}{S_\ell} \leq \frac{2ah}{cS_m}\biggr) \notag \\
\label{eq:dirichlet-conditional-1}
&\leq \ell \cdot \zeta_{\ell - 1}\,\E\Bigl\{\Bigl(\frac{2ah}{cS_m}\Bigr)^{1/2}\Bigr\} \lesssim_{\ell,m,n} \Bigl(\frac{ah}{c}\Bigr)^{1/2}.
\end{align}
A slightly simpler argument yields the same conclusion when $\ell=1$.  Similarly, $Y_{(\ell:m]}^*/(S_m - S_\ell)$, $S_\ell/S_m$ and $S_m$ are independent with $Y_m^*/(S_m - S_\ell) \sim \mathrm{Beta}(\frac{1}{2},\frac{m - \ell-1}{2})$ when $\ell \in [m-2]$, so if $h < d$, then
\begin{align}
\Pr\biggl(\min_{i \in (\ell:m]} Y_i^* \leq ah \,\biggm| \Bigl|\frac{S_\ell}{S_m} - c\Bigr| < h\biggr) &= \Pr\biggl(\min_{i \in (\ell:m]} \frac{Y_i^*}{S_m - S_\ell} \leq \frac{S_m}{S_m - S_\ell} \cdot \frac{ah}{S_m} \,\biggm| \Bigl|\frac{S_\ell}{S_m} - c\Bigr| < h\biggr) \notag \\
&\leq \Pr\biggl(\min_{i \in (\ell:m]} \frac{Y_i^*}{S_m - S_\ell} \leq \frac{2ah}{(1 - c)S_m}\biggr) \notag \\
&\leq (m - \ell)\zeta_{m - \ell-1}\,\E\Bigl\{\Bigl(\frac{2ah}{(1 - c)S_m}\Bigr)^{1/2}\Bigr\} \lesssim_{\ell,m,n} \Bigl(\frac{ah}{1 - c}\Bigr)^{1/2}.
\end{align}
Again, the same final conclusion holds when $\ell = m-1$.  Finally, if $m < n$, then $S_\ell/S_m$ is independent of $\bigl(1 - S_m,Y_{(m:n]}^*/(1 - S_m)\bigr)$ and hence $Y_{(m:n]}^*$, so if $h < d$, then
\begin{align}
\Pr\biggl(\min_{i \in (m:n]} Y_i^* \leq ah \,\biggm| \Bigl|\frac{S_\ell}{S_m} - c\Bigr| < h\biggr)
&= \Pr\Bigl(\min_{i \in (m:n]} Y_i^* \leq ah\Bigr) \notag \\
\label{eq:dirichlet-conditional-2}
&\leq (n - m)\,\Pr(Y_n^* \leq ah) \lesssim_{m,n} (ah)^{1/2}.
\end{align}
Thus, for $h < d$, summing~\eqref{eq:dirichlet-conditional-1}--\eqref{eq:dirichlet-conditional-2} yields
\[
\Pr\biggl(\min_{i \in [n]} Y_i^* \leq ah \,\biggm|\, \Bigl|\frac{S_\ell}{S_m} - c\Bigr| < h\biggr) \lesssim_{\ell,m,n} \Bigl(\frac{ah}{d}\Bigr)^{1/2}. \qedhere
\]
\end{proof}

\subsection{Asymptotic results}

\begin{proposition}
\label{Prop:Asymptotic}
Fix $p_0 \in \N_0,p \in \N$ with $p_0 < p$. For each $n>p$, let $\mathcal M_n$ be a
collection of joint distributions of $(X_n,\varepsilon_n)$ on $\R^{n \times p} \times \R^n$, where for every~$n$ and $\nu_n\in\mathcal M_n$, the design matrix~$X_n$ has full column rank almost surely and is independent of
$\varepsilon_n$. Suppose further that the rows
$X_{n1}^\top,\ldots,X_{nn}^\top$ of $X_n$ are independent and identically distributed, with
\[
\Sigma_{X,\nu_n}
:=
\E_{\nu_n}(X_{n1}X_{n1}^\top)
\]
being well-defined, and that $\varepsilon_n$ has independent components $\varepsilon_{n1},\ldots,\varepsilon_{nn}$ satisfying
\[
\E_{\nu_n}(\varepsilon_{ni})=0,
\qquad
\E_{\nu_n}(\varepsilon_{ni}^2)
=
\sigma_{\nu_n}^2\in(0,\infty)
\]
for $i \in [n]$. Assuming that
\begin{align}
\label{Eq:cX} &\ c_X:=\inf_{\nu_n\in\mathcal{M}_n}
\lambda_{\min}(\Sigma_{X,\nu_n})
> 0, \\
\label{Eq:UI}&
\lim_{M \rightarrow \infty} \limsup_{n \rightarrow \infty} \sup_{\nu_n\in\mathcal{M}_n}
\E_{\nu_n}\bigl(
\|X_{n1}\|_2^2
\mathbbm{1}_{\{\|X_{n1}\|_2^2 > M\}}\bigr)
 = 0, \\
\label{Eq:Lindeberg}&\lim_{M\to\infty} \limsup_{n \rightarrow \infty}\sup_{\nu_n\in\mathcal{M}_n}
\frac{1}{n}\sum_{i=1}^n
\E_{\nu_n}\biggl(
\frac{\varepsilon_{ni}^2}{\sigma_{\nu_n}^2}
\mathbbm{1}_{\{|\varepsilon_{ni}|/\sigma_{\nu_n}>M\}}
\biggr)
= 0,
\end{align}
we have under $H_0$ that
\[
\sup_{\nu_n\in\mathcal{M}_n}\W_1(nT,nT^*) \rightarrow 0 \quad \text{and hence} \quad \sup_{\nu_n\in\mathcal{M}_n}\dK(\mathsf{F},\mathsf{F}^*)\rightarrow 0.
\]
\end{proposition}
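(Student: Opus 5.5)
The plan is to show that $\sup_{\nu_n\in\mathcal{M}_n}\W_1(nT,nT^*)\to 0$. Theorem~\ref{thm:dK-W1-A} then gives the second conclusion immediately, because its constant $C(n,p,p_0)$ is bounded by a universal constant. For uniformity over $\mathcal{M}_n$ I would argue by contradiction. Suppose there exist $\eta>0$, a sequence $n_m\to\infty$ and $\nu_{n_m}\in\mathcal{M}_{n_m}$ with $\W_1(n_mT,n_mT^*)\geq\eta$. It then suffices to prove $\W_1(n_mT,n_mT^*)\to0$ along this arbitrary sequence, and the hypotheses \eqref{Eq:cX}--\eqref{Eq:Lindeberg} hold along it. Since $T_3=1-T_1-T_2$ and $T_3^*=1-T_1^*-T_2^*$, we have $\norm{T-T^*}_2\leq 2^{1/2}(|T_1-T_1^*|+|T_2-T_2^*|)$. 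So it is enough to show that $(nT_1,nT_2)$ and $(nT_1^*,nT_2^*)$ are close in $\W_1$ on $\R^2$.

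\emph{Step 1 (distributional limits).} Write $\hat\sigma^2:=\norm{\varepsilon}_2^2/n$. By \eqref{Eq:Lindeberg} and a truncated weak law of large numbers, $\hat\sigma^2/\sigma^2\to1$ in probability. By \eqref{Eq:UI} and a uniformly integrable weak law, $X^\top X/n-\Sigma_X\to0$ in probability. Then \eqref{Eq:cX} gives $\lambda_{\min}(X^\top X/n)\geq c_X/2$ with probability tending to one, and tightness of $\Sigma_X$ lets me pass to a further subsequence along which $\Sigma_X\to\Sigma_\infty\succ0$. Put $Z:=X^\top\varepsilon/(\sigma n^{1/2})$. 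Conditionally on $X$, $Z$ is a sum of independent mean-zero vectors with covariance $X^\top X/n$, and I would verify a conditional Lindeberg condition from \eqref{Eq:UI} and \eqref{Eq:Lindeberg}, giving $Z\cvd N_p(0,\Sigma_\infty)$. Now $nT_1=Z_0^\top(X_0^\top X_0/n)^{-1}Z_0\cdot\sigma^2/\hat\sigma^2$, where $Z_0$ consists of the first $p_0$ coordinates of $Z$, and $nT_1+nT_2=Z^\top(X^\top X/n)^{-1}Z\cdot\sigma^2/\hat\sigma^2$. By the continuous mapping theorem, $(nT_1,nT_2)\cvd(\chi^2_{p_0},\chi^2_{p-p_0})$ with independent components. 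Directly from the Dirichlet representation, $(nT_1^*,nT_2^*)$ has the same limit.

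\emph{Step 2 (upgrading to $\W_1$).} Convergence in distribution plus convergence of first moments of nonnegative variables gives uniform integrability, hence $\W_1$ convergence. The key observation is an exact identity for the means. The rows of $X$ are i.i.d., so $X$ is row-exchangeable and $\E(P_0)=(p_0/n)I_n$ and $\E(P)=(p/n)I_n$. Writing $\xi=\varepsilon/\norm{\varepsilon}_2$ and using independence of $X$ and $\varepsilon$,
\[
\E(nT_1)=n\tr\bigl\{\E(P_0)\E(\xi\xi^\top)\bigr\}=p_0\,\E\norm{\xi}_2^2=p_0,
\]
and similarly $\E(nT_2)=p-p_0$. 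These are exactly $\E(nT_1^*)$ and $\E(nT_2^*)$, and also the means of the limits. Therefore $nT_j$ and $nT_j^*$ are uniformly integrable, so both converge to the common limit in $\W_1$. The triangle inequality then gives $\W_1(nT,nT^*)\to0$ along the subsequence, which is the required contradiction.

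The main obstacle is Step 1. I have to check the conditional Lindeberg condition for $Z$ and the laws of large numbers uniformly over a sequence of changing distributions, using only the uniform integrability assumptions \eqref{Eq:UI} and \eqref{Eq:Lindeberg}. To control the cross terms in the Lindeberg sum I would first truncate $\norm{X_{ni}}_2^2\leq M$ and $|\varepsilon_{ni}|/\sigma\leq M$, then let $M\to\infty$ after $n\to\infty$. By comparison, Step 2 is easy because of the exact mean identity.
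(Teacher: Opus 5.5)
Your Step 1 is sound: the continuous-mapping route via $Z=X^\top\varepsilon/(\sigma n^{1/2})$ and $\hat\sigma^2$ is a legitimate alternative to the paper's QR decomposition and $\W_2$ argument. Step 2, however, rests on a false identity, and it is exactly where the real difficulty lies.

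Row-exchangeability of $X$ only gives $\Pi\,\E(P_0)\Pi^\top=\E(P_0)$ for permutation matrices $\Pi$. That is, $\E(P_0)=\frac{p_0}{n}I_n+b\,(1_n1_n^\top-I_n)$ for some scalar $b$, and $b\neq0$ in general. With an intercept column, for instance, $P_0=1_n1_n^\top/n$, and the same problem affects $\E(P)$. Nor is $\E(\xi\xi^\top)$ a multiple of $I_n$. Self-normalisation destroys the orthogonality of independent mean-zero errors, so $\E(\varepsilon_i\varepsilon_j/\norm{\varepsilon}_2^2)\neq0$ for $i\neq j$ when the errors are asymmetric. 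Hence $\E(nT_1)=p_0+nb\sum_{i\neq j}\E(\xi_i\xi_j)$, which need not equal $p_0$.

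Concretely, take $p_0=0$, $p=1$, $n=2$ and $X=1_2$, which has i.i.d.\ rows and $\Sigma_X=1$. Let the errors be i.i.d., equal to $2$ or $-1$ with probabilities $1/3$ and $2/3$. Then $\E(nT_2)=\E\{(\varepsilon_1+\varepsilon_2)^2/(\varepsilon_1^2+\varepsilon_2^2)\}=6/5\neq 1$. So you have no argument for uniform integrability of $nT_1,nT_2$. Convergence in probability $\hat\sigma^2/\sigma^2\to1$ cannot supply one, because $nT_j$ can be as large as $n$ on the event that $\norm{\varepsilon}_2^2/n$ is small.

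The missing idea, which is how the paper proceeds, is to use exact moments only before normalisation, and then control the normaliser with an exponential tail bound.

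Since $\E(\varepsilon\varepsilon^\top)=\sigma^2I_n$ exactly, $U_1:=\varepsilon^\top P_0\varepsilon/\sigma^2$ and $U_2:=\varepsilon^\top(P-P_0)\varepsilon/\sigma^2$ have means exactly $p_0$ and $p-p_0$. Your Step 1, without the factor $\sigma^2/\hat\sigma^2$, shows they converge in distribution to $\chi^2_{p_0}$ and $\chi^2_{p-p_0}$. Hence they are uniformly integrable.

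Now write $nT_j=U_j\sigma^2/\hat\sigma^2$. On $\mathcal{E}_n:=\{\hat\sigma^2\geq\sigma^2/2\}$ you have $nT_j\leq 2U_j$. On $\mathcal{E}_n^c$ only $nT_j\leq n$ is available, so you need $n\Pr(\mathcal{E}_n^c)\to0$ uniformly. A weak law does not give this. Instead, truncate $\varepsilon_i^2/\sigma^2$ at a level $M^2$ chosen via \eqref{Eq:Lindeberg}, so that for large $n$ the truncated means average more than $3/4$. Hoeffding's inequality then gives $\Pr(\mathcal{E}_n^c)\leq e^{-n/(8M^4)}$.

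With uniform integrability of $nT_1,nT_2$ established this way, your final step goes through: convergence in distribution plus uniform integrability, then the triangle inequality with $nT^*$, whose Dirichlet means are exact.
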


In the proof below, we write $\E_n:=\E_{\nu_n}$, $\Pr_n:=\Pr_{\nu_n}$, $\sigma_n^2:=\sigma_{\nu_n}^2$ and $\Sigma_{X,n}:=\Sigma_{X,\nu_n}$ for brevity. Moreover, we use the following notation: given a sequence of random vectors $(X_n)$, we write $X_n = o_{\mathcal{M}_n}(1)$ if for every $\epsilon > 0$, we have
\[
\sup_{\nu_n \in \mathcal{M}_n} \Pr_n\bigl(\norm{X_n}_2 > \epsilon\bigr) \to 0.
\]

\begin{proof}
Fix $\eta > 0$. By a triangular array version of the uniform weak law of large numbers \citep[][Theorem~A.11.13]{samworth24modern}, and the uniform integrability condition~\eqref{Eq:UI},
\[
\sup_{\nu_n\in\mathcal M_n}
\Pr_n\biggl(
\biggl\|
\frac{X_n^\top X_n}{n}-\Sigma_{X,n}
\biggr\|_{\mathrm{op}}>\eta
\biggr) \leq \sup_{\nu_n\in\mathcal M_n}
\Pr_n\biggl(
\biggl\|
\frac{X_n^\top X_n}{n}-\Sigma_{X,n}
\biggr\|_{\mathrm{F}}>\eta
\biggr)
\rightarrow 0.
\]
Then by Weyl's inequality \citep[][Theorem~A.10.17]{samworth24modern}, it follows that $\sup_{\nu_n\in\mathcal M_n}
\Pr_n\bigl(\lambda_{\min}(X_n^\top X_n/n)<c_X/2\bigr)\rightarrow 0$. On the other hand, by~\eqref{Eq:UI} again, 
\begin{align*}
\sup_{\nu_n\in\mathcal{M}_n}
\Pr_n\biggl(
\max_{i\in[n]}\frac{\|X_{ni}\|_2^2}{n}\geq\eta
\biggr) &\leq  \sup_{\nu_n\in\mathcal{M}_n}n\Pr_n\bigl(\|X_{n1}\|_2^2\geq n\eta\bigr) \\
&\leq \sup_{\nu_n\in\mathcal{M}_n} \frac{1}{\eta}
\E_n\bigl(
\|X_{n1}\|_2^2
\mathbbm{1}_{\{\|X_{n1}\|_2^2\geq n\eta\}}
\bigr)\rightarrow 0.
\end{align*}
With
\[
P_n:=X_n(X_n^\top X_n)^{-1}X_n^\top,
\qquad
h_n:=\max_{i\in[n]}(P_n)_{ii}=\max_{i\in[n]}X_{ni}^\top(X_n^\top X_n)^{-1}X_{ni},
\]
we deduce for every $\eta > 0$ that
\begin{align*}
\sup_{\nu_n \in \mathcal{M}_n} \mathbb{P}_n(h_n > \eta) &\leq \sup_{\nu_n \in \mathcal{M}_n} \mathbb{P}_n\biggl(\frac{
\max_{i\in[n]}\|X_{ni}\|_2^2/n
}{
\lambda_{\min}(X_n^\top X_n/n)} > \eta\biggr) \\
&\leq \sup_{\nu_n \in \mathcal{M}_n} \mathbb{P}_n\biggl(\lambda_{\min}(X_n^\top X_n/n) < \frac{c_X}{2}\biggr) + \sup_{\nu_n \in \mathcal{M}_n} \mathbb{P}_n\biggl(\max_{i\in[n]}\frac{\|X_{ni}\|_2^2}{n} > \frac{2\eta}{c_X}
\biggr) \rightarrow 0.
\end{align*}

Now consider the QR decomposition $X_n =QR$, where $R$ has positive diagonal entries, so  $P_n=QQ^{\top}$.  Then writing $Q_{i\cdot}$ for the $i$th row of $Q$, we have $\max_{i\in[n]} \|Q_{i\cdot}\|_2= h_n^{1/2} = o_{\mathcal{M}_n}(1)$.  Fix $t\in \mathbb{R}^{p}\setminus \{0\}$, and let $a_n = (a_{n1},\ldots,a_{nn}) :=Qt/\|t\|_2$, so that $\|a_n\|_2 = 1$ and $|a_{ni}| \leq h_n^{1/2}$. As the rows of $X$ are exchangeable, so are the rows of $Q$ and thus $\E_n(a^2_{ni})=1/n$ for $i\in [n]$. With $Y_n = (Y_{n1},\ldots,Y_{np}) :=Q^\top\varepsilon_n/\sigma_n$, we have $t^\top Y_n/\|t\|_2 = a_n^\top \varepsilon_n/\sigma_n$.  We verify the conditional Lindeberg condition as follows: given $\epsilon > 0$, by~\eqref{Eq:Lindeberg}, we can find $\delta > 0$ and $n_0 \in \mathbb{N}$ such that 
 \[
\sup_{\nu_n\in\mathcal M_n}
\frac{1}{n}\sum_{i=1}^n
\E_n\biggl(
\frac{\varepsilon_{ni}^2}{\sigma_n^2}
\mathbbm{1}_{\{|\varepsilon_{ni}|>
 \epsilon \sigma_{n}/\delta\}}
\biggr)
\leq \frac{\epsilon^2}{2}
\]
for $n \geq n_0$.  Now choose $n_1 \in \mathbb{N}$ large enough that $\sup_{\nu_n\in\mathcal M_n}\mathbb{P}_n(h_n^{1/2} > \delta) \leq \epsilon/2$ for $n \geq n_1$.  Then by Markov's inequality, for $n \geq \max(n_0,n_1)$,
\begin{align*}
\sup_{\nu_n\in\mathcal M_n}&\Pr_n\biggl(\frac{1}{\sigma^2_n}\sum_{i=1}^n
\E_n\bigl(
a_{ni}^2\varepsilon_{ni}^2
\mathbbm{1}_{\{
|a_{ni}\varepsilon_{ni}|>
\epsilon \sigma_n\}} \bigm| X_n
\bigr) > \epsilon\biggr) \\
&\leq \sup_{\nu_n\in\mathcal M_n}\Pr_n\biggl(\frac{1}{\sigma^2_n}\sum_{i=1}^n
a_{ni}^2\E_n\bigl(
\varepsilon_{ni}^2
\mathbbm{1}_{\{|\varepsilon_{ni}|>
\epsilon \sigma_n/h^{1/2}_n\}} \bigm| X_n
\bigr) > \epsilon\biggr) \\
&\leq \sup_{\nu_n\in\mathcal M_n}\Pr_n(h_n^{1/2}>\delta)+ \sup_{\nu_n\in\mathcal M_n}\frac{1}{\epsilon}\sum_{i=1}^n \E_n(a^2_{ni})\,\E_n\biggl(
\frac{\varepsilon_{ni}^2}{\sigma_n^2}
\mathbbm 1_{\{|\varepsilon_{ni}|>
\epsilon\sigma_{n}/\delta\}}
\biggr)\leq \epsilon.
\end{align*}
By a triangular array version of the uniform Lindeberg--Feller central limit theorem \citep[][Theorem~A.11.14]{samworth24modern}, we have
\[
\sup_{x \in \mathbb{R}}\;\biggl|\mathbb{P}_n\biggl(\frac{t^\top Y_n}{\|t\|_2} \leq x \biggm| X_n\biggr) - \Phi(x)\biggr| = o_{\mathcal{M}_n}(1).
\]
Since this limiting distribution does not depend on $X_n$, we deduce that 
\[
\sup_{\nu_n \in \mathcal{M}_n} \sup_{x \in \mathbb{R}}\;\biggl|\mathbb{P}_n\biggl(\frac{t^\top Y_n}{\|t\|_2} \leq x\biggr) - \Phi(x)\biggr| \rightarrow 0.
\]
It follows by the Cram\'er--Wold device that under any sequence $(\nu_n)$ with $\nu_n \in \mathcal{M}_n$, we have $Y_n \stackrel{d}{\rightarrow} Z$.  Together with the fact that under any sequence $(\nu_n)$ with $\nu_n \in \mathcal{M}_n$, we have
\[
\E\bigl(\|Y_n\|^2_2\bigr)=\E\bigl(\E(\|Y_n\|^2_2\, | \, X_n)\bigr)=\frac{1}{\sigma_n^2}\E\bigl(
\tr\bigl\{
Q^\top\E(\varepsilon_n\varepsilon_n^\top)Q
\bigr\}\bigr)=p=\E(\|Z\|^2_2),
\]
we conclude that $\W_2(Y_n,Z)\rightarrow 0$, and thus $\W_1(Y_n,Z)\rightarrow 0$. As this conclusion holds for each sequence $(\nu_n)$ with $\nu_n\in \mathcal{M}_n$ for each $n$, 
we have $\sup_{\nu_n\in\mathcal M_n}\W_1(Y_n,Z)\rightarrow 0$.
    
Now define $S:\mathbb{R}^p \rightarrow \mathbb{R}^p$ by $S(x_1,\ldots,x_p) := (x_1^2,\ldots,x_p^2)$, and set $B_n := S(Y_n)$, $C_Z := S(Z)$.  For each $\nu_n\in\mathcal{M}_n$, choose a sequence $(\tilde{Y}_n)$ and $\tilde{Z}$, defined on the same probability space, such that $\tilde{Y}_n \stackrel{d}{=} Y_n$, $\tilde{Z} \stackrel{d}{=} Z$ and $\mathbb{E}_n\bigl(\|\tilde{Y}_n - \tilde{Z}\|_2^2\bigr) = \W_2^2(Y_n,Z)$.  For $x = (x_1,\ldots,x_p)$, $\tilde{x} = (\tilde{x}_1,\ldots,\tilde{x}_p) \in \mathbb{R}^p$, we have
\[
\|S(x) - S(\tilde{x})\|_2 = \biggl\{\sum_{j=1}^p (x_j^2 - \tilde{x}_j^2)^2\biggr\}^{1/2} = \biggl\{ \sum_{j=1}^p (x_j - \tilde{x}_j)^2(x_j + \tilde{x}_j)^2\biggr\}^{1/2} \leq \|x-\tilde{x}\|_2\|x+\tilde{x}\|_2.
\]
Hence, by Cauchy--Schwarz,
\begin{align*}
\W_1(B_n,C_Z) &\leq \mathbb{E}_n\bigl(\|S(\tilde{Y}_n) - S(\tilde{Z})\|_2\bigr) \leq \mathbb{E}_n\bigl(\|\tilde{Y}_n -\tilde{Z}\|_2\|\tilde{Y}_n + \tilde{Z}\|_2\bigr) \\
&\leq \bigl\{\mathbb{E}_n\bigl(\|\tilde{Y}_n -\tilde{Z}\|_2^2\bigr)\mathbb{E}\bigl(\|\tilde{Y}_n+\tilde{Z}\|_2^2\bigr)\bigr\}^{1/2} \leq 2^{1/2}\W_2(Y_n,Z)\bigl\{\mathbb{E}_n(\|Y_n\|_2^2) + \mathbb{E}(\|Z\|_2^2)\bigr\}^{1/2} \\
&= 2p^{1/2}\W_2(Y_n,Z).
\end{align*}
Therefore, 
\[
\sup_{\nu_n\in\mathcal M_n}\W_1(B_n,C_Z)\leq 2p^{1/2}\sup_{\nu_n\in\mathcal M_n}\W_2(Y_n,Z)\rightarrow 0.
\]
Now let
\[
R_n:=\frac{\|\varepsilon_n\|^2_2}{n\sigma_n^2}, \quad
A_n:= S\biggl(\frac{Y_n}{R_n^{1/2}}\biggr),
\] 
and $\mathcal E_n:=\{R_n\geq1/2\}$.  On $\mathcal E_n$, we have $|R_n^{-1}-1|
\leq \min\bigl\{2|R_n-1|,1\bigr\}$, so for any $j\in[p]$ and $K>0$,
\[
\E_n\bigl(Y_{nj}^2|R_n^{-1}-1|
\mathbbm{1}_{\mathcal E_n}\bigr)\leq 2K\E_n(|R_n-1|)+\E_n\bigl(Y_{nj}^2\mathbbm{1}_{\{Y_{nj}^2>K\}}\bigr).
\]
By, e.g., \citet[][Lemma~B, p.~15]{serfling2009approximation}, $\sup_{n \in \mathbb{N}} \sup_{\nu_n \in \mathcal{M}_n} \mathbb{E}_n\bigl(Y_{nj}^2\mathbbm{1}_{\{Y_{nj}^2 \geq K\}}\bigr) \rightarrow 0$ as $K \rightarrow \infty$ and moreover, $\sup_{\nu_n\in\mathcal M_n}\E_n|R_n-1|\rightarrow 0$ by a uniform version of the $L^1$ law of large numbers.  It follows that
\[
\limsup_{n \rightarrow \infty} \sup_{\nu_n\in\mathcal M_n}\E_n\bigl(Y_{nj}^2|R_n^{-1}-1|
\mathbbm{1}_{\mathcal E_n}\bigr) \leq \lim_{K\rightarrow \infty} \limsup_{n \rightarrow \infty} \sup_{\nu_n\in\mathcal M_n}\E_n\bigl(Y_{nj}^2\mathbbm{1}_{\{Y_{nj}^2>K\}}\bigr) = 0.
\]

Next, by assumption~\eqref{Eq:Lindeberg}, there exists $M>0$ such that
\[
\limsup_{n \rightarrow \infty}\sup_{\nu_n\in\mathcal M_n} \frac{1}{n}\sum_{i=1}^n \E_n\biggl(\frac{\varepsilon_{ni}^2}{\sigma_n^2}
\mathbbm{1}_{\{
|\varepsilon_{ni}|/\sigma_n > M
\}}
\biggr) < \frac{1}{4}.
\] 
Let $u_{ni} := (\varepsilon_{ni}^2/\sigma_n^2)\Ind_{\{|\varepsilon_{ni}|/\sigma_n \leq M\}} \leq M^2$ for $i \in [n]$. Since $\E_n(\varepsilon_{ni}^2/\sigma_n^2) = 1$ for all $i$, it follows that
\[
\liminf_{n \rightarrow \infty} \inf_{\nu_n\in\mathcal M_n}\frac{1}{n}\sum_{i=1}^n
\E_n(u_{ni}) > \frac{3}{4}.
\]
Then by Hoeffding's inequality, for all sufficiently large $n$,
\begin{align*}
\sup_{\nu_n \in \mathcal{M}_n} \Pr_n(\mathcal{E}_n^c)
&= \sup_{\nu_n \in \mathcal{M}_n} \Pr_n\biggl(
\frac{1}{n}\sum_{i=1}^n \frac{\varepsilon_{ni}^2}{\sigma_n^2}
 < \frac{1}{2}
\biggr)
\leq \sup_{\nu_n \in \mathcal{M}_n} \Pr_n\biggl(\frac{1}{n} \sum_{i=1}^n u_{ni} < \frac{1}{2}
\biggr) \\
&\leq \sup_{\nu_n \in \mathcal{M}_n} \Pr_n\biggl(\frac{1}{n}\sum_{i=1}^n 
\bigl(u_{ni} - \E(u_{ni})\bigr) < -\frac{1}{4}
\biggr) \leq e^{-n/(8M^4)}.
\end{align*}
Since $Y_{nj}^2/R_n = n(Q_j^\top\varepsilon_n)^2/\|\varepsilon_n\|^2_2 \leq n$ for all $n$, we deduce that 
\begin{align*}
\limsup_{n \to \infty} \sup_{\nu_n \in\mathcal M_n}\E_n\bigl(Y_{nj}^2|R_n^{-1}-1|
\mathbbm{1}_{\mathcal E^c_n}\bigr) &\leq \limsup_{n \to \infty}\,\biggl\{\sup_{\nu_n\in\mathcal M_n}\E_n\biggl(\frac{Y_{nj}^2}{R_n}
\mathbbm{1}_{\mathcal E^c_n}\biggr) + \sup_{\nu_n\in\mathcal M_n}\E_n(Y_{nj}^2
\mathbbm{1}_{\mathcal E^c_n})\biggr\} \\
&\leq \limsup_{n \to \infty}\,\Bigl\{ne^{-n/(8M^4)}
+
\sup_{\nu_n\in\mathcal M_n}\E_n(
Y_{nj}^2\mathbbm{1}_{\mathcal E_n^c})\Bigr\} = 0.
\end{align*}
Summing over $j \in [p]$, we conclude that 
\begin{align*}
\sup_{\nu_n\in\mathcal M_n}\W_1(A_n, B_n) &\leq \sup_{\nu_n\in\mathcal M_n}\mathbb{E}_n\bigl(\|S(Y_n/R^{1/2}_n) - S(Y_n)\|_2\bigr) \\
&\leq \sup_{\nu_n\in\mathcal M_n}\mathbb{E}\bigl(\|S(Y_n/R^{1/2}_n) - S(Y_n)\|_1\bigr) = \sup_{\nu_n\in\mathcal M_n}\sum_{j=1}^p \mathbb{E}(Y_{nj}^2|R_n^{-1}-1|) \rightarrow 0,
\end{align*}
so $\sup_{\nu_n\in\mathcal M_n} \W_1(A_n,C_Z) \rightarrow 0$ as $n \to \infty$.  Now define $Z^*_n \sim N_n(0,\sigma_n^2 I_n)$, $Y^*_n := Q^\top Z^*_n/\sigma_n$, $R_n^* := \|Z^*_n\|_2^2/(n\sigma_n^2)$ and 
\[
A_n^* := S\biggl(\frac{Y^*_n}{(R_n^*)^{1/2}}\biggr).
\]
Then as a special case of the above argument, it follows that $\sup_{\nu_n\in\mathcal M_n}\W_1(A_n^*,C_Z)\rightarrow 0$, so $\sup_{\nu_n\in\mathcal M_n}\W_1(A_n, A^*_n)\rightarrow 0$. Finally, define $g_n \colon \R^p \to \R^3$ by
\[
g_n(x_1,\ldots,x_p)
:=
\biggl(
\sum_{j=1}^{p_0}x_j,\,
\sum_{j=p_0+1}^{p}x_j,\,
n-\sum_{j=1}^{p}x_j
\biggr).
\]
Then for any $x,y \in \R^p$, we have
\[
\|g_n(x)-g_n(y)\|_2^2 \leq \|x_{[p_0]} - y_{[p_0]}\|_1^2 + \|x_{(p_0:p]} - y_{(p_0:p]}\|_1^2 + \|x - y\|_1^2 \leq 2\|x-y\|_1^2 \leq 2p\|x-y\|_2^2.
\]
Moreover, $g_n(A_n^*) \stackrel{d}{=} nT^*$, so
\[
\sup_{\nu_n\in\mathcal M_n}\W_1(nT,nT^*)=\sup_{\nu_n\in\mathcal M_n}\W_1\bigl(g_n(A_n),g_n(A^*_n)\bigr)\leq (2p)^{1/2}\sup_{\nu_n\in\mathcal M_n}\W_1(A_n,A^*_n)\rightarrow 0.
\]
The final conclusion then follows from Theorem~\ref{thm:dK-W1-A}.
\end{proof}
The following is a simplified version of Proposition~\ref{Prop:Asymptotic}.
\begin{corollary}
\label{cor:asymptoticF}
Fix $p_0 \in \N_0,p \in \N$ with $p_0 < p$.  Let $(X_1,\varepsilon_1),(X_2,\varepsilon_2),\dotsc$ be independent and identically distributed pairs taking values in $\R^p \times \R$ such that $\E(\|X_1\|_2^2) < \infty$ and $\E(X_1 X_1^\top)$ is positive definite, while $\varepsilon_1$ is independent of $X_1$ and satisfies $\E(\varepsilon_1)=0$ and $\E(\varepsilon_1^2)=\sigma^2 \in (0,\infty)$. Let $X$ denote the $n \times p$ matrix with rows $X_1,\dotsc,X_n$, and let $\varepsilon := (\varepsilon_1,\dotsc,\varepsilon_n)$. Then under~$H_0$, we have
\[
\W_1(nT,nT^*) \rightarrow 0 \quad \text{and hence} \quad \dK(\mathsf{F},\mathsf{F}^*)\rightarrow 0.
\]
\end{corollary}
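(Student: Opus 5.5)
The plan is to deduce Corollary~\ref{cor:asymptoticF} directly from Proposition~\ref{Prop:Asymptotic}. For each $n > p$, I take $\mathcal{M}_n$ to be the singleton containing the joint law of $(X,\varepsilon)$ built from the first $n$ pairs. Then the suprema over $\mathcal{M}_n$ are just evaluations at a single distribution, and the conclusion of Proposition~\ref{Prop:Asymptotic} is exactly the claim. What remains is to check its hypotheses one at a time.

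First, the structural hypotheses. Independence of $X$ and $\varepsilon$ follows because the pairs are i.i.d.\ and $\varepsilon_1$ is independent of $X_1$. Hence the families $(X_i)_{i\in[n]}$ and $(\varepsilon_i)_{i\in[n]}$ are independent, the rows of $X$ are i.i.d., and the $\varepsilon_i$ are independent with mean $0$ and common variance $\sigma^2 \in (0,\infty)$. The matrix $\Sigma_X = \E(X_1X_1^\top)$ is well defined since $\E\|X_1\|_2^2 < \infty$. Condition~\eqref{Eq:cX} holds because $\lambda_{\min}(\Sigma_X) > 0$ by positive definiteness, and this value does not depend on $n$. For~\eqref{Eq:UI}, the quantity $\E(\|X_1\|_2^2\Ind_{\{\|X_1\|_2^2>M\}})$ does not depend on $n$ and tends to $0$ as $M\to\infty$ by dominated convergence. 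For~\eqref{Eq:Lindeberg}, the average reduces to $\E\bigl((\varepsilon_1^2/\sigma^2)\Ind_{\{|\varepsilon_1|>M\sigma\}}\bigr)$, which again tends to $0$ by dominated convergence.

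The one hypothesis that needs care, and the main obstacle, is that $X$ must have full column rank almost surely. This requires $n$ large enough and is not automatic: for instance, $X_1$ might have an atom at a vector lying in a proper subspace. My approach is to use the fact that $\Pr(\mathrm{rank}(X)<p) \to 0$, since $X^\top X/n \to \Sigma_X$ almost surely by the strong law of large numbers. I then handle the exceptional event separately. Either I note that the statistic is defined by a convention there, or I couple $(X,\varepsilon)$ with a modified design on that event. The modification perturbs $\W_1(nT,nT^*)$ by at most $n\cdot 2^{1/2}\Pr(\mathrm{rank}(X)<p)$, which decays exponentially in $n$, and it changes $\dK$ by at most $\Pr(\mathrm{rank}(X)<p)$.

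To justify exponential decay, I would argue that for each direction $v$ in a finite net of the unit sphere, $\Pr(X_1^\top v = 0) < 1$, because $\E\{(X_1^\top v)^2\} > 0$. A compactness argument then gives $\sup_{\|v\|_2=1}\Pr(X_1^\top v=0) \le \rho < 1$ locally. Note, however, that $\Pr(\mathrm{rank}(X)<p)$ is an event of the form "all rows lie in some hyperplane", which has no simple union over finitely many $v$; this is why the net/compactness step is where the real work sits. If this becomes too delicate, the simpler route is to interpret the corollary as holding under the standing assumption from Section~\ref{sec:main-results} that $X$ has full column rank almost surely, conditioning on that event, and invoking Proposition~\ref{Prop:Asymptotic} directly.

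Finally, the convergence $\dK(\mathsf{F},\mathsf{F}^*)\to0$ follows from $\W_1(nT,nT^*)\to0$ via Theorem~\ref{thm:dK-W1-A}. The constant $C(n,p,p_0)$ there is bounded by the universal constant $3\pi^{1/3}/2^{5/6}$, so no blow-up in $n$ occurs.
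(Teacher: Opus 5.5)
Your core argument matches the paper's proof: take $\mathcal{M}_n$ to be a singleton, note that \eqref{Eq:cX} is immediate and that \eqref{Eq:UI} and \eqref{Eq:Lindeberg} reduce to $n$-free tail expectations vanishing by dominated convergence, then apply Proposition~\ref{Prop:Asymptotic} and the uniform bound on $C(n,p,p_0)$ from Theorem~\ref{thm:dK-W1-A}. The paper's one-line proof never mentions full column rank. It relies tacitly on the standing assumption of Section~\ref{sec:main-results}, so your fallback is exactly what the paper does. That same assumption also supplies $\Pr(\varepsilon=0)=0$, which you did not mention. Under the stated hypotheses $\Pr(\varepsilon=0)=\Pr(\varepsilon_1=0)^n$ can be positive, and $T$ is undefined on that event.

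Your primary workaround does not go through as written. Proposition~\ref{Prop:Asymptotic} can be used as a black box only for designs with i.i.d.\ rows and full column rank almost surely. Fixing a convention on $\{\mathrm{rank}(X)<p\}$ does not make it applicable, and replacing $X$ on that event generally destroys the i.i.d.\ row structure. For instance, with $p=1$ and Bernoulli rows, the modified design still has zero rows with positive probability, whereas an i.i.d.-row design with $p=1$ is almost surely nonzero only if each row is almost surely nonzero.

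A modification that does work is $\tilde X_i:=X_i+\tau_n Z_i$ with independent $Z_i\sim N_p(0,I_p)$ and $\tau_n\in(0,1]$.
\begin{itemize}
\item This keeps the rows i.i.d.\ and independent of $\varepsilon$, preserves \eqref{Eq:cX} and \eqref{Eq:UI}, and gives full rank almost surely.
\item $T$ is continuous in $X$ on $\{\mathrm{rank}(X)=p\}$ (given $\varepsilon\neq0$). So bounded convergence lets you choose $\tau_n$ making that event's contribution to $n\,\E\norm{T(\tilde X,\varepsilon)-T(X,\varepsilon)}_2$ at most $1/n$.
\end{itemize}

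You then still need $n\Pr(\mathrm{rank}(X)<p)\to0$, and the net argument you outline does not deliver it. Instead, proceed as follows.
\begin{itemize}
\item Set $\rho:=\max_{\norm{v}_2=1}\Pr(X_1^\top v=0)$. Then $\rho<1$, because each term is $<1$ (as $v^\top\E(X_1X_1^\top)v>0$) and $v\mapsto\Pr(X_1^\top v=0)$ is upper semicontinuous on the compact sphere, by reverse Fatou.
\item Add rows one at a time. A proper span lies in some hyperplane, so each new row enlarges it with conditional probability at least $1-\rho$.
\item Hence $\Pr(\mathrm{rank}(X)<p)\le\Pr\bigl(\mathrm{Bin}(n,1-\rho)<p\bigr)$, which decays exponentially.
\end{itemize}
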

\begin{proof}
Under the given assumptions, conditions~\eqref{Eq:UI} and~\eqref{Eq:Lindeberg} follow immediately by the dominated convergence theorem, so the result follows by Proposition~\ref{Prop:Asymptotic}.
\end{proof}

\end{document}